\newmdenv[backgroundcolor=yellow]{shaded}
\long\def\symbolfootnote[#1]#2{\begingroup%
\def\thefootnote{\fnsymbol{footnote}}\footnote[#1]{#2}\endgroup}
\newtheorem{Theorem}{Theorem}[section]
\newtheorem{Corollary}[Theorem]{Corollary}
\newtheorem{Proposition}[Theorem]{Proposition}
\newtheorem{Conjecture}[Theorem]{Conjecture}
\newtheorem{Remark}[Theorem]{Remark}
\newtheorem{Example}[Theorem]{Example}
\newtheorem{Definition}[Theorem]{Definition}
\newtheorem{Question}[Theorem]{Question}
\newtheorem{Sticky Points}[Theorem]{Sticky Points}
\def\adeg{\mbox{\rm adeg}}
\def\Ass{\mbox{\rm Ass}}
\def\bl{[\![}
\def\br{]\!]}
\def\bdeg{\mbox{\rm bdeg}}
\def\cdeg{\mbox{\rm cdeg}}
\def\cl{\overline}
\def\coker{\mbox{\rm coker}}
\def\deg{\mbox{\rm deg}}
\def\Deg{\mbox{\rm Deg}}
\def\ddeg{\mbox{\rm bideg}}
\def\depth{\mbox{\rm depth}}
\def\ds{\displaystyle}
\def\e{\mathrm{e}}
\def\Ext{\mbox{\rm Ext}}
\def\gr{\mbox{\rm gr}}
\def\H{{\mathrm H}}
\def\hdeg{\mbox{\rm hdeg}}
\def\h{\mbox{\rm ht}}
\def\Hom{\mbox{\rm Hom}}
\def\HP{\mbox{\rm P}}
\def\j{\mbox{\rm j}}
\def\jdeg{\mbox{\rm jdeg}}
\def\ker{\mbox{\rm ker}}
\def\lar{\longrightarrow}
\def\l{{\lambda}}
\def\m{\mathfrak{m}}
\def\n{\mathfrak{n}}
\def\p{{\mathfrak p}}
\def\Proj{\mbox{\rm Proj}}
\def\QQ{\mathbb{Q}}
\def\rar{\rightarrow}
\def\reg{\mbox{\rm reg}}
\def\Spec{\mbox{\rm Spec}}
\def\tr{\mbox{\rm tr}}
\def\tratto{\mbox{\rule{3mm}{.2mm}$\;\!$}}
\def\ZZ{\mathbb{Z}}
\def\RR{\mathbb{R}}
\def\aa{{\mathbf a}}
\def\g2{{\mathbf g}}
\def\xx{{\mathbf x}}
\def\zz{{\mathbf z}}
\def\C{\mathcal{C}}
\def\D{\mathcal{D}}
\def\M{\mathfrak{M}}
\def\P{\mathcal{P}}
\def\rmS{{\mathrm S}}
\begin{document}

\title{\sc Degrees: Vasconcelos Contributions}

\thanks{
{\bf  Key Words:}  multiplicity, arithmetic degree, jdeg, homological degree, cohomological degree, canonical ideal, canonical degree, bi-canonical degree.\\
L. Ghezzi was partially supported by the Fellowship Leave from the New York City College of Technology-CUNY (Fall 2022-Spring 2023) and by a grant from the City University of New York PSC-CUNY Research Award Program Cycle 53.
}

\author{Laura Ghezzi} \address{Department of Mathematics, New York City College of Technology and the Graduate Center, The City University of New York,
 300 Jay Street, Brooklyn, NY 11201, U.S.A.;
365 Fifth Avenue,
New York, NY 10016, U.S.A.}
 \email{laura.ghezzi58@citytech.cuny.edu}

\author{Jooyoun Hong}
\address{Department of Mathematics, Southern Connecticut State
University, 501 Crescent Street, New Haven, CT 06515-1533, U.S.A.}
\email{hongj2@southernct.edu}

\date{\today}

\maketitle

{\em \small In loving memory of Wolmer Vasconcelos. It has been an honor and a privilege to be his friends and collaborators, and to witness the joy that mathematics brought to him. 
}

\begin{abstract} A degree of a module $M$ is a numerical measure of information carried by $M$. We highlight some of Vasconcelos' outstanding contributions to the theory of degrees, bridging commutative algebra and computational algebra. We present several degrees he introduced and developed, including arithmetic degree, jdeg, homological degree, cohomological degrees, canonical degree and bi-canonical degree. For the canonical and bi-canonical degrees we discuss recent developments motivated by our joint works \cite{blue1, bideg, BGHV}.
\end{abstract}

\section{Introduction}\label{survey}

We discuss some of Vasconcelos' outstanding results in the theory of degrees and how they have inspired many researches in commutative algebra and beyond. 
We present (in chronological order when appropriate) several degrees Vasconcelos introduced and developed, such as arithmetic degree, jdeg, homological degree, cohomological degrees, canonical degree and bi-canonical degree. For more details and proofs on the statements of various degrees  we refer the reader to \cite{complexity} and to the referenced literature. For canonical degree and bi-canonical degree we focus on our joint work, informal discussions, open questions that were of particular interest to Vasconcelos, and recent developments. A more comprehensive survey is in \cite{BGHHV} and a full treatment is in our papers \cite{blue1, bideg, BGHV}. 
We refer to \cite{BH} for general background and any unexplained terminology.

\medskip

Let $(R,\m)$ be a Noetherian local ring (or a Noetherian graded algebra) and let
$\mathcal M (R)$ be the category of finitely generated $R$-modules (or the appropriate category
of graded modules). Broadly speaking, a degree function is a numerical function
$d : \mathcal M (R) \mapsto \mathbb{N}$. Extensively studied degree functions include the multiplicity
(classical degree) and the Castelnuovo-Mumford regularity. The main point is that the value $d(M)$ should capture several important elements of the
structure of the $R$-module $M$. A major theme of Vasconcelos' prolific research is the connection between commutative algebra and computational algebra (see for instance \cite{V98book}). Therefore we highlight some issues of particular interest when we look for new degree functions.

\medskip

Cohen-Macaulay structures can be considered as fundamental blocks (see \cite{BH} for a beautiful treatment of the subject). Vasconcelos wrote in \cite{complexity}: ``
We hold the view that Cohen-Macaulay structures are ubiquitous in algebra,
are central to commutative algebra and algebraic geometry and represent unique computational
efficiencies and that without an understanding of the ways it appears in
the structure of non Cohen-Macaulay structures the prospects for successful large scale
symbolic computations are rather dim.'' Hence it is desirable to seek functions that coincide with the classical multiplicity when the $R$-module $M$ is Cohen-Macaulay and to be able to measure the deviation of $M$ from being Cohen-Macaulay.  It is also important to relate a new degree function to other classical invariants of $M$ and to study is its behavior under generic hyperplane sections.

\medskip

A main topic of Vasconcelos' research is the study of the complexity of normalization processes of graded algebras. In this context, let $R$ be a normal, unmixed integral domain and let $A$ be a semistandard graded $R$-algebra (i.e., a finite extension of a standard graded $R$-algebra). Let $\overline A$ be the integral closure of $A$. Estimating the number of steps that general algorithms must take to build $\overline A$ can be viewed as an invariant of A. For several interesting classes of algebras, including affine graded algebras over fields and Rees algebras of ideals and modules, multiplicity dependent bounds were derived for instance in \cite{DV, HUV, PUV, UV, V91}. One of Vasconcelos' broad goals was to define new degree functions that provide extensions of such bounds to general graded algebras and to find new techniques to study the complexity.

\medskip
Now we briefly describe how this manuscript is organized. The first half of the paper focuses on arithmetic degree, jdeg, homological degree, and cohomological degrees.
The definitions, properties and developments of these degrees are in the respective sections. 

\medskip

The {\em arithmetic degree} is addressed in Section \ref{AD}. It is a refinement of the classical multiplicity, but it is defined in a way that collects information about the associated
primes of $M$ in all codimensions. A major contribution of Vasconcelos is a formula for the arithmetic degree that does not use a primary decomposition (Theorem \ref{adW}). Due to his result, software systems such as Macaulay2 \cite{Macaulay2} or CoCoA \cite{CoCoA} can be used to compute the arithmetic degree. 

\medskip

Section \ref{jdegsection} is dedicated to {\em 
$jdeg$}, introduced by Pham and Vasconcelos as a generalization of $\j$-multiplicity. One important point is that the $\jdeg$ provides estimates for the complexity of normalization (Theorem \ref{jdW}). It also extends classical bounds.

\medskip 

In Section \ref{HDsection} we discuss the {\em homological degree}, which was introduced by Vasconcelos as an extension of
the usual notion of multiplicity, aimed to provide a numerical signature for
a module $M$ when $M$ is not necessarily Cohen-Macaulay. The difference between the homological degree of $M$ and its multiplicity is often called a Cohen-Macaulay deficiency in the literature.  
For example, if $M$ is generalized Cohen-Macaulay, this difference is the St\"{u}ckrad-Vogel invariant of $M$. The homological degree is defined recursively on the dimension of the module, which allows,
in principle, for its calculation by symbolic computer packages. It can be used to bound the Euler characteristic of parameter ideals (Theorem \ref{HDbound}).
The homological degree is used to define the {\em homological torsion}, which plays an important role in the study of Hilbert coefficients. The homological torsion can be used to bound the first Hilbert coefficient $\e_{1}(Q, M)$ of a module $M$ relative to an ideal generated by a system of parameters $Q$ for $M$ (Theorem \ref{HTbound}). The coefficient $\e_{1}(Q, M)$ is often called the Chern number, and it has been extensively studied by Vasconcelos.

\medskip

In Section \ref{CDsection} we discuss {\em cohomological degrees}, introduced by Vasconcelos. 
The notion of cohomological degree generalizes the homological degree and it is the classical multiplicity if $M$ is Cohen-Macaulay. It provides an estimate for the Castelnuovo-Mumford regularity of a standard graded algebra (Theorem \ref{CDmumford}). The cohomological degree can also be used to estimate the minimal number of generators of an ideal, generalizing and improving many previously known bounds (Theorem \ref{CDbound}). 
We also discuss the {\em bdeg}, a cohomological degree introduced by Vasconcelos' former student Gunston.
\medskip

The second half of the manuscript, Sections \ref{canonicalsection}, \ref{sallysubsection}, \ref{bicanonicalsection}, \ref{comparisonsection}, \ref{changeringsection},
\ref{bicanonicalgeneralizationsection}, deals with more recently introduced degrees, the canonical degree and the bi-canonical degree \cite{blue1, bideg, BGHV}. 
We include topics and developments motivated by our discussions with Vasconcelos. 

\medskip

The overarching goal here is the stratification of Cohen-Macaulay rings. Several researchers 
have been interested in finding new significant classes of rings which naturally include Gorenstein rings. We introduce invariants that characterize some of these classes and measure the deviation from $R$ being Gorenstein.
Let $R$ be a Cohen-Macaulay local ring that has a canonical ideal $\C$.  A canonical ideal is an ideal isomorphic to a canonical module of $R$. The condition that the canonical module is an ideal is equivalent to $R$ being generically Gorenstein. We refer to \cite[Section 3]{BH} for the background on canonical modules.
The ring $R$ is Gorenstein when $\C$ is isomorphic to $R$, therefore we look at the properties of $\C$ as a way to refine our understanding of $R$.

\medskip

In Section \ref{canonicalsection} we discuss the {\em canonical degree} of $R$, $\cdeg(R)$. It is defined so that its vanishing characterizes the Gorenstein property of $R$ in codimension one. Furthermore, the minimal value of the canonical degree is assumed on the class of {\em almost Gorenstein rings} (Proposition \ref{1dimalmostg}). Almost Gorenstein rings local rings (AGL for short) were introduced in \cite{BF97}, and a rich theory has been developed in \cite{GMP11} and \cite{GTT15}. Numerical semigroup rings, in particular the three-generated ones, provide the set-up for many interesting results and examples, which we summarize at the end of the section. In Section \ref{sallysubsection} we relate the canonical degree to the Sally module, introduced by Vasconcelos. This connection inspired the definition of 2-AGL rings \cite{CGKM}, as a generalization of AGL rings. We propose a further generalization to 3-AGL rings, we give a characterization in terms of canonical degree when the type of the ring is small, we present supporting examples in three-generated semigroup rings (computed with the aid of Macaulay2), and we conclude with some open questions.

 \medskip
 
In Section \ref{bicanonicalsection} we discuss the {\em bi-canonical degree} of $R$, $\ddeg(R)$. Like the canonical degree, the vanishing of the bi-canonical degree characterizes Gorensteinness in codimension one. However, the bi-canonical degree seems more suitable for computations. We recall the important properties and we explore rings with minimal value of bideg. 

\medskip

In Section \ref{comparisonsection} we discuss the Comparison Conjecture. In \cite{bideg} we conjectured that in general, $\ddeg(R) \leq \cdeg(R)$.  We prove that the conjecture holds in three-generated numerical semigroup rings, using the methods of \cite{bideg}, where we had shown a special case. There have been very interesting recent developments  for numerical semigroup rings. Using different methods, Herzog and Kumashiro in \cite{HK22} studied an inequality that turned out to be equivalent to the Comparison Conjecture. Their results imply that the conjecture is true when the type of $R$ is at most three, and they found a counterexample when the type is five. 

\medskip
In Section \ref{changeringsection} we summarize how the canonical degree and the bi-canonical degree behave under change of rings, in particular augmented rings, $(\m:\m)$, and hyperplane sections.

\medskip

Finally, in Section \ref{bicanonicalgeneralizationsection}, we present the main results and open questions from \cite{BGHV}, our last  paper with Vasconcelos. We extended the bi-canonical degree to rings where the canonical module is not necessarily an ideal, and we discussed generalizations to rings without canonical modules but admitting modules sharing some of their properties.

\section{Arithmetic Degree}\label{AD}

We begin this section by recalling the definition of multiplicity, which motivated the definition of the arithmetic degree. Let $R$ be a Noetherian local ring of dimension $d>0$ with maximal ideal $\m$ and let $M$ be a finitely generated $R$-module of dimension $s$.  Let $I$ be an $\m$-primary ideal of $R$. We denote length by $\lambda$. We define the Hilbert function of $M$ with respect to $I$ as  
\[ \H_{I, M}: n \mapsto \l(M/I^{n+1}M). \]
There exists a polynomial ${\ds \HP_{I, M}(X) \in \QQ[X]}$ of degree $s$ such that ${\ds \H_{I, M}(n) =  \HP_{I, M}(n)}$ for all sufficiently large $n$. This polynomial ${\ds \HP_{I, M}(X)}$ is called the Hilbert polynomial of $M$ with respect to $I$ and can be written as  
\[ \HP_{I, M}(n) = \sum_{i=0}^{s} (-1)^{i} \e_{i}(I, M) {{n+s-i}\choose{s-i}}. \]
The integers $\e_{i}(I, M)$ are called the Hilbert coefficients of $M$ with respect to $I$. In particular, $\e_{0}(I, M)$ is called the multiplicity (or degree) of the module $M$ with respect to $I$. When $I=\m$, we write ${\ds \e_{0}(\m, M) = \deg(M)}$.

\medskip

There is a chain of submodules ${\ds 0=M_{0} \subset M_{1} \subset \cdots \subset M_{r-1} \subset M_{r}=M}$ such that ${\ds M_{i}/M_{i-1} \simeq R/\p_{i}}$ for some ${\ds \p_{i} \in \Spec(R)}$.
In this case, we obtain
\[ \deg(M) = \sum_{\tiny \dim(R/\p_{i})=d} \deg(R/\p_{i}). \]
Recall that ${\ds \Ass(M) \subseteq \{\p_{i}\}_{i=1}^{n} }$. 
If ${\ds \p \in \Ass(M)}$ with ${\ds \dim(R/\p)=d}$, then the length ${\ds \l(M_{\p})}$ is the number of times that $\p$ occurs in the chain of submodules. That is, we have
\[ \sum_{\tiny \begin{array}{c} \p \in \Ass(M) \\  \dim(R/\p)=d \end{array}} \l(M_{\p}) \, \deg(R/\p) \leq \deg(M). \]
The arithmetic degree of $M$ was defined in a way that collects information about the associated primes of $M$ in {\em all} codimensions. 

\begin{Definition}{\rm \cite[Definition 1.6]{V98} Let $(R, \m)$ be a Noetherian local ring and $M$ a finitely generated $R$-module. 
  The {\em arithmetic degree} of $M$ is the integer
\[ \adeg(M) = \sum_{\tiny \p \in \Ass(M)}  \l( \Gamma_{\p}(M_{\p})) \, \deg(R/\p), \]
where ${\ds \Gamma_{\p}(M_{\p}) = H^{0}_{\p R_{\p}}(M_{\p}) }$ is the finite length submodule of $M_{\p}$ consisting of all elements of ${\ds M_{\p}}$ with support in ${\ds \p R_{\p}}$. (In some literatures, the arithmetic degree is also denoted by ${\ds \mbox{arith-deg}(M)}$.) 
}\end{Definition}

If all the associated primes of $M$ have the same codimension, then $\adeg(M)$ is equal to $\deg(M)$.  
Both $\deg(M)$ and $\adeg(M)$ can be defined in the same way as above when $M$ is a (graded) finitely generated module over a standard graded algebra ${\ds A=A_{0}[A_{1}]}$ with the Artinian local ring $A_{0}$. In this case, the Hilbert function of $M$ is defined in terms of the homogeneous maximal ideal of $A$. Bayer and Mumford \cite{BM93} gave an extensive survey on ${\ds \adeg(A)}$ and its significance in algebraic geometry, where ${\ds A=k[X_{1}, \ldots, X_{n}]/I}$ is the quotient of a polynomial ring over a field $k$. 

\medskip

Vasconcelos showed how to compute the arithmetic degree of a graded module without using a primary decomposition. Due to his theorem, software systems such as Macaulay2 or CoCoA can be used to compute the arithmetic degree. 

\begin{Theorem}{\rm (\cite[Proposition 1.11]{V98}, \cite[Proposition 5]{V96})}\label{adW}
Let $R=k[X_{1}, \ldots, X_{d}]$ and $M$ a graded $R$-module. Then
\[ \adeg(M) = \sum_{i=0}^{d}  \deg \left( \Ext^{i}_{R} \left( \Ext^{i}_{R}(M, R), R \right)  \right). \] 
\end{Theorem}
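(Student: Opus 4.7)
The plan is to show that for each $i$ the module $N_i := \Ext^i_R(\Ext^i_R(M,R), R)$ is the ``codimension-$i$ unmixed piece'' of $M$. More precisely, I would establish three claims: (a) if $N_i \neq 0$, then every associated prime of $N_i$ has codimension exactly $i$; (b) a prime $\p$ of codimension $i$ lies in $\Ass(N_i)$ if and only if $\p \in \Ass(M)$; and (c) for such a $\p$, one has $(N_i)_\p \cong \Gamma_\p(M_\p)$, so $\l_{R_\p}((N_i)_\p) = \l_{R_\p}(\Gamma_\p(M_\p))$. Granting these, the degree decomposition for the pure-codimensional module $N_i$ yields
\[
\deg(N_i) = \sum_{\substack{\p \in \Ass(M) \\ \codim \p = i}} \l(\Gamma_\p(M_\p)) \, \deg(R/\p),
\]
and summing $i$ from $0$ to $d$ partitions $\Ass(M)$ by codimension, producing exactly $\adeg(M)$.

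For (b) and (c), the key tool is Grothendieck local duality at $\p$. Since $R$ is a polynomial ring, $R_\p$ is Gorenstein of dimension $i = \codim \p$, and the top Ext is Matlis-dual to the zeroth local cohomology:
\[
\Ext^i_{R_\p}(M_\p, R_\p) \cong \Hom_{R_\p}\bigl(\Gamma_{\p R_\p}(M_\p),\, E(k(\p))\bigr).
\]
Because $\Gamma_{\p R_\p}(M_\p)$ has finite length, its Matlis dual is again a finite-length module supported at $\p R_\p$. Applying $\Ext^i_{R_\p}(-, R_\p)$ a second time and invoking Matlis reflexivity on finite-length modules returns $\Gamma_\p(M_\p)$, giving (c). Claim (b) then follows from the standard fact that $\Gamma_{\p R_\p}(M_\p) \neq 0$ precisely when $\p \in \Ass(M)$. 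Primes of codimension strictly less than $i$ do not appear in $\supp(N_i)$ since $\Ext^i_{R_\p}(-, R_\p) = 0$ when $\dim R_\p < i$, and primes of codimension strictly greater than $i$ are excluded by (a).

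Claim (a) is where the main work lies. The standard input over a Cohen--Macaulay ring is that $\grade(\Ext^i_R(L, R)) \geq i$ whenever this module is nonzero, and that applying $\Ext^i_R(-, R)$ a second time produces a module that is either zero or of pure grade $i$ --- an Auslander--Bridger type statement for Gorenstein rings. The genuine obstacle in a self-contained treatment is ruling out associated primes of $N_i$ of codimension strictly greater than $i$; this can be done either by a direct grade computation for the Auslander bidual, or by appeal to the spectral sequence relating iterated Ext modules to the codimensional filtration of $M$. Once this purity is secured, the theorem collapses to the local-duality computation summarized above, combined with the additivity of the classical degree on modules of pure codimension.
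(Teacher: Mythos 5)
This survey states Theorem \ref{adW} without proof, citing \cite[Proposition 1.11]{V98} and \cite[Proposition 5]{V96}, so there is no in-paper argument to compare against; your outline in fact reconstructs the route taken in those sources. Your claims (b) and (c) are handled correctly: for $\p$ of codimension $i$, $R_{\p}$ is Gorenstein of dimension $i$, local duality identifies $\Ext^{i}_{R_{\p}}(M_{\p},R_{\p})$ with the Matlis dual of $\Gamma_{\p R_{\p}}(M_{\p})$, this is a finite length module, and a second application plus Matlis duality on finite-length modules gives $\l\bigl((N_i)_{\p}\bigr)=\l\bigl(\Gamma_{\p}(M_{\p})\bigr)$; the vanishing of $\Ext^{i}_{R_{\p}}(-,R_{\p})$ for $\codim \p<i$ and the bookkeeping with the associativity formula are also fine. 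One point worth making explicit: you need full purity of $N_i$, not merely $\codim N_i\geq i$, because $\deg(N_i)$ is the multiplicity of $N_i$ in its own dimension; if $M$ had no associated prime of codimension $i$ but $N_i$ were a nonzero module supported entirely in codimension $>i$, then $\deg(N_i)$ would contribute a spurious positive term. Purity (plus your claim (b)) forces $N_i=0$ in that situation, which is exactly what the summation needs.

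The genuine gap is claim (a) itself. The statement that $\Ext^{i}_{R}\bigl(\Ext^{i}_{R}(M,R),R\bigr)$ is either zero or of pure codimension $i$, with associated primes exactly the codimension-$i$ associated primes of $M$, is the entire technical content of the theorem; it is the result of Eisenbud--Huneke--Vasconcelos \cite{EHV} (Theorem 1.1 there) on which Vasconcelos' proof rests. Your proposal only gestures at it (``a direct grade computation for the Auslander bidual, or \ldots the spectral sequence''), and neither gesture is carried out. Note also that the purity is not a formal consequence of the easy fact $\grade\,\Ext^{i}_{R}(L,R)\geq i$: that inequality applied twice only gives $\codim N_i\geq i$, which, as explained above, is insufficient. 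So as written the proposal is a correct reduction of the theorem to the key lemma of \cite{EHV}, together with a correct local-duality computation, but it does not constitute a proof of that lemma; to be self-contained you would need to supply the purity argument (for instance via the duality spectral sequence $\Ext^{p}_{R}(\Ext^{q}_{R}(M,R),R)\Rightarrow M$ and the dimension filtration, or the grade-theoretic argument of \cite{EHV}).
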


One of the main characteristics of a new degree function $ \Deg(M)$ (see Section 5) that we wish to obtain is that it behaves well under generic hyperplane sections. That is, if $A=A_{0}[A_{1}]$ is a standard graded algebra with the Artinian local ring $A_{0}$ and if $h \in A_{1}$ is not contained in any minimal prime of $A$, then ${\ds \Deg(M) \geq \Deg(M/hM) }$. Unfortunately, the arithmetic degree fails to obtain this characteristic.

\begin{Theorem}\label{V98-1-14}{\rm \cite[Theorem 1.14]{V98}}
Let $A=A_{0}[A_{1}]$ be a standard graded algebra with the Artinian local ring $A_{0}$  and $M$ a finitely generated graded $A$-module. Let $h \in A_{1}$ be a regular element on $M$. Then
\[  \adeg(M) \leq \adeg(M/hM). \]
\end{Theorem}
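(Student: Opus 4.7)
The plan is to analyze the inequality prime-by-prime. Fix $\mathfrak{p} \in \Ass(M)$. Since $h$ is $M$-regular and every element of $\mathfrak{p}$ is a zero-divisor on $M$, we have $h \notin \mathfrak{p}$. The exact sequence $0 \to A/\mathfrak{p} \xrightarrow{h} A/\mathfrak{p} \to A/(\mathfrak{p}+hA) \to 0$ together with the associativity formula for multiplicities then gives
$$\deg(A/\mathfrak{p}) = \sum_\mathfrak{q} \lambda\bigl(A_\mathfrak{q}/(\mathfrak{p}+hA)_\mathfrak{q}\bigr)\, \deg(A/\mathfrak{q}),$$
where $\mathfrak{q}$ runs through the minimal primes of $\mathfrak{p}+hA$, each satisfying $\dim A/\mathfrak{q} = \dim A/\mathfrak{p} - 1$. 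Taking $m \in M$ with $\ann(m) = \mathfrak{p}$ and observing that $(0:_M \mathfrak{p}) \cap hM = h\,(0:_M \mathfrak{p})$ (by $h$-regularity), one sees that each such $\mathfrak{q}$ belongs to $\Ass(M/hM)$.

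Substituting into the definition of $\adeg(M)$ and interchanging summation order, the theorem reduces to the local inequality
$$\sum_\mathfrak{p} \lambda\bigl(\Gamma_\mathfrak{p}(M_\mathfrak{p})\bigr)\, \lambda\bigl(A_\mathfrak{q}/(\mathfrak{p}+hA)_\mathfrak{q}\bigr) \leq \lambda\bigl(\Gamma_\mathfrak{q}((M/hM)_\mathfrak{q})\bigr),$$
at each fixed $\mathfrak{q}$, where $\mathfrak{p}$ ranges over those primes in $\Ass(M)$ for which $\mathfrak{q}$ is minimal over $\mathfrak{p}+hA$. I would localize at $\mathfrak{q}$, writing $S = A_\mathfrak{q}$, $\mathfrak{n} = \mathfrak{q} S$, $N = M_\mathfrak{q}$. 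The relevant $\mathfrak{p}' = \mathfrak{p} S$ then form the collection of pairwise incomparable, one-dimensional associated primes of $N$. Setting $I := \bigcap_{\mathfrak{p}'} \mathfrak{p}'$, consider the submodule $T := \Gamma_I(N) \subseteq N$.

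The module $T$ is finitely generated of dimension one, with minimal primes exactly the $\mathfrak{p}'$, and the localization $T_{\mathfrak{p}'} = \Gamma_{\mathfrak{p}'}(N_{\mathfrak{p}'})$ (since in $S_{\mathfrak{p}'}$ the other $\mathfrak{p}'_i$ become units, so $I$ localizes to $\mathfrak{p}' S_{\mathfrak{p}'}$). By $h$-regularity of $N$ a direct check gives $T \cap hN = hT$, producing an inclusion $T/hT \hookrightarrow N/hN$ whose image lies in $\Gamma_\mathfrak{n}(N/hN)$ because $V(I+hS) = \{\mathfrak{n}\}$. Combining the classical identity $\lambda_S(T/hT) = \e(h;T)$ for one-dimensional $h$-regular modules with the associativity formula
$$\e(h;T) = \sum_{\mathfrak{p}'} \lambda_{S_{\mathfrak{p}'}}\!\bigl(\Gamma_{\mathfrak{p}'}(N_{\mathfrak{p}'})\bigr)\, \lambda_S\bigl(S/(\mathfrak{p}'+hS)\bigr)$$
produces the desired local inequality, and summing over the arising $\mathfrak{q}$ yields $\adeg(M) \leq \adeg(M/hM)$.

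The main obstacle is the local step. The critical insight is to work with $T = \Gamma_I(N)$ rather than the naive sum $\sum_{\mathfrak{p}'} \Gamma_{\mathfrak{p}'}(N)$, for which the identity $T \cap hN = hT$ can fail; the choice $T = \Gamma_I(N)$ makes this identity fall out immediately from $h$-regularity, while the incomparability of the one-dimensional associated primes forces $T_{\mathfrak{p}'} = \Gamma_{\mathfrak{p}'}(N_{\mathfrak{p}'})$ with no cross-contamination, so that the associativity formula for $\e(h;T)$ cleanly recovers the sum appearing on the left-hand side.
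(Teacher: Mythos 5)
Your proof is correct, and there is nothing in the paper to compare it against: the survey only quotes this statement from \cite{V98} without reproducing an argument, so your localization-plus-associativity proof stands on its own. The reduction is right: $h\notin\mathfrak{p}$ for every $\mathfrak{p}\in\Ass(M)$, the graded exact sequence $0\to (A/\mathfrak{p})(-1)\stackrel{h}{\to} A/\mathfrak{p}\to A/(\mathfrak{p}+hA)\to 0$ preserves multiplicity because $h$ has degree one, and the claim $\dim A/\mathfrak{q}=\dim A/\mathfrak{p}-1$ for all minimal primes $\mathfrak{q}$ of $\mathfrak{p}+hA$ deserves one line (it follows since $A$ is a quotient of a Cohen--Macaulay polynomial ring over the Artinian local ring $A_0$, hence catenary with the dimension formula at the graded maximal ideal), although your argument would survive even without it, since lower-dimensional minimal primes simply drop out of the associativity formula. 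The local step is the real content and it works as you say: incomparability of the one-dimensional associated primes gives $T_{\mathfrak{p}'}=\Gamma_{\mathfrak{p}'}(N_{\mathfrak{p}'})$, $h$-regularity gives $T\cap hN=hT$, the equality $V(I+hS)=\{\mathfrak{n}\}$ places the image of $T/hT$ inside $\Gamma_{\mathfrak{n}}(N/hN)$, and $\lambda(T/hT)=\e(h;T)$ together with associativity yields the local inequality, where you implicitly also use $\e(h;S/\mathfrak{p}')=\lambda\bigl(S/(\mathfrak{p}'+hS)\bigr)$ for the one-dimensional domains $S/\mathfrak{p}'$ --- the same length-equals-multiplicity fact you invoke for $T$, so worth stating. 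Note also that positivity of the right-hand side of the local inequality already certifies $\mathfrak{q}\in\Ass(M/hM)$, so your separate remark via $(0:_M\mathfrak{p})\cap hM=h(0:_M\mathfrak{p})$ (which is itself correct, after localizing at $\mathfrak{q}$ and applying Nakayama) is not even needed; summing over the arising $\mathfrak{q}$ and discarding the nonnegative contributions of the remaining associated primes of $M/hM$ completes the proof.
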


In \cite{V96} Vasconcelos explored relations between the Castelnuovo-Mumford regularity and the arithmetic degree of a standard algebra $A$. 
He also studied comparisons between the arithmetic degree and the reduction number of $A$. He extensively studied the reduction number as a measure of the complexity of $A$. 
One interesting application of the arithmetic degree is that it can be used to make predictions about the outcome of carrying out a Noether normalization on a graded algebra.

\begin{Theorem}{\rm (\cite[Theorem 2.11]{V98}, \cite[Theorem 7]{V96})}
Let $A$ be an affine algebra over an infinite field $k$ and let $k[\zz]$ be a Noether normalization of $A$. Let $M$ be a finitely generated,  graded, and faithful $A$-module. Then every element of $A$ satisfies a monic equation over $k[\zz]$ of degree at most $\adeg(M)$.
\end{Theorem}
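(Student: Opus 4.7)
The plan is to combine the module-theoretic Cayley--Hamilton theorem with a bound on the minimal number of $k[\zz]$-generators of $M$. The key observation is that since $M$ is faithful, the multiplication map $A \to \End_{k[\zz]}(M)$, $a \mapsto \phi_{a}$, is injective, so any monic polynomial relation satisfied by $\phi_{a}$ with coefficients in $k[\zz]$ pulls back to a monic relation satisfied by $a$ itself in $A$.

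First, I would invoke Cayley--Hamilton for modules: if $M$ is generated by $n$ elements as a $k[\zz]$-module, then every $k[\zz]$-linear endomorphism of $M$ satisfies a monic polynomial of degree $n$ with coefficients in $k[\zz]$. This reduces the theorem to proving the inequality $\mu_{k[\zz]}(M) \leq \adeg(M)$, where $\mu_{k[\zz]}(M)$ is the minimal number of generators of $M$ as a $k[\zz]$-module. By graded Nakayama, $\mu_{k[\zz]}(M) = \dim_{k}\bigl(M/(z_{1},\ldots,z_{d})M\bigr)$, i.e.\ the $k$-length of $M$ modulo the parameter ideal generated by $\zz$.

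Second, to estimate this length I would pass through a prime filtration
\[ 0 = M_{0} \subset M_{1} \subset \cdots \subset M_{r} = M, \qquad M_{i}/M_{i-1} \simeq (A/\p_{i})(-s_{i}), \]
and use the right-exactness of reduction modulo $(z_{1},\ldots,z_{d})$ to obtain a telescoping bound. For each factor one has $\dim_{k}\bigl((A/\p_{i})/(z_{1},\ldots,z_{d})(A/\p_{i})\bigr) = \deg(A/\p_{i})$, where the infiniteness of $k$ is used to guarantee that $\zz$ is sufficiently generic to be a system of parameters for every relevant $A/\p_{i}$. Summing and then regrouping primes by their associated-prime multiplicities $m_{\p}(M) = \l(\Gamma_{\p}(M_{\p}))$ yields
\[ \mu_{k[\zz]}(M) \;\leq\; \sum_{\p \in \Ass(M)} m_{\p}(M)\deg(A/\p) \;=\; \adeg(M). \]

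The main obstacle I anticipate is the combinatorial bookkeeping in the second step: primes appearing in a prime filtration need not all be associated primes of $M$, and embedded primes contribute in a subtle way. The correct accounting is dictated by the local structure $\Gamma_{\p}(M_{\p})$ at each associated prime $\p$, which is exactly why the arithmetic degree is defined using these local-cohomology lengths. One may either refine the filtration inductively so that only associated primes appear (each with its correct multiplicity), or decompose $M$ via a primary decomposition $0 = \bigcap_{\p} N_{\p}$ and bound $\mu_{k[\zz]}(M/N_{\p})$ separately before reassembling via the embedding $M \hookrightarrow \bigoplus_{\p} M/N_{\p}$.
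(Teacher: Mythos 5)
Your first step is fine as far as it goes: faithfulness does embed $A$ into $\End_{k[\zz]}(M)$, and Cayley--Hamilton does give a monic equation whose degree is the number of $k[\zz]$-generators of $M$. The problem is the inequality you then reduce to, $\nu_{k[\zz]}(M)\le \adeg(M)$: it is false, and with it the whole strategy. Take $A=k[x,y]$ (its own Noether normalization) and $M=(x,y)$. Then $M$ is graded and faithful, $\Ass(M)=\{(0)\}$, so $\adeg(M)=\deg(M)=1$, yet $M$ needs two generators over $k[\zz]=A$; your mechanism can only produce degree $2$, while the theorem asserts (and here trivially has, via $T-a$) degree $1$. The identity your length estimate rests on, $\dim_k\bigl((A/\p)/(\zz)(A/\p)\bigr)=\deg(A/\p)$, is also false in general: the colength of a parameter ideal is at least the multiplicity, with equality precisely in the Cohen--Macaulay case, and genericity of $\zz$ (infinite $k$) only ensures that $\zz$ is a system of parameters for the factors, not that the factors are Cohen--Macaulay --- besides, the Noether normalization is part of the data, not at your disposal. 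Concretely, for the non-Cohen--Macaulay domain $A=M=k[s^4,s^3t,st^3,t^4]$ with Noether normalization $k[s^4,t^4]$ one has $\adeg(A)=\deg(A)=4$, but $A$ needs five generators over $k[s^4,t^4]$, so even with the trivial filtration your route gives $5$, strictly worse than the asserted bound $4$. (In general $\nu_{k[\zz]}(M)$ and $\adeg(M)$ are incomparable: for $M=k[x]\oplus k[x]/(x^2)$ the inequality goes the other way.)

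The repairs you anticipate do not close this gap. A prime (cyclic) filtration of $M=(x,y)\subset k[x,y]$ necessarily has one factor isomorphic to $A$ and at least one factor $A/\p$ with $\p\neq(0)$, which is not an associated prime of $M$; so a cyclic filtration involving only associated primes, each with its ``correct'' multiplicity, simply does not exist, and primary decomposition runs into the same colength-versus-multiplicity problem on each piece. The deeper point is that $\adeg(M)$ controls generic, rank-type data --- the lengths $\l(\Gamma_{\p}(M_{\p}))$ at the associated primes --- whereas Nakayama/Cayley--Hamilton measures colengths modulo $(\zz)$, i.e.\ generator counts, and these are not dominated by $\adeg(M)$. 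A correct argument must manufacture the monic equation from the generic data: for a filtration whose factors are torsion-free $(A/\p)$-modules, the action of $a$ on a top-dimensional factor is killed by the minimal polynomial of $a$ over $k(\zz)$, whose coefficients lie in $k[\zz]$ because $k[\zz]$ is integrally closed and whose degree is at most $[\Quot(A/\p):k(\zz)]\le\deg(A/\p)$ (this is what yields the sharp bound $4$, not $5$, in the example above); the remaining contributions to $\adeg(M)$ must pay for the other factors, and faithfulness is used only at the very end to conclude that the product of these polynomials, which annihilates $M$, vanishes at $a$. The generator-count reduction should be abandoned rather than patched.
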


Miyazki, Vogel, and Yanagawa \cite{MVY97} presented a refined theory on the arithmetic degree of a homogeneous ideal $I$ of a polynomial ring $S$ over a field. A key point in their work is the description of the arithmetic degree in terms of the Hilbert polynomial of an appropriate $\Ext_S^i(S/I,S)$, 
based on Vasconcelos' ideas \cite{EHV, V96}.

\medskip

\section{$\jdeg$}\label{jdegsection}

The notion of $\j$-multiplicity was introduced and developed by Achilles and Manaresi \cite{AM93}, Flenner, O'Carroll and Vogel \cite{FOV99}, and evolved into a rich
extension of the ordinary multiplicity (see for instance \cite{NU, UV07}).  Let $(R, \m)$ be a Noetherian local ring and let $A$ be a finitely generated graded $R$-algebra. Let $M$ be a finitely generated graded $A$-module. Then $\H^{0}_{\m}(M)$ is a graded submodule of $M$ and is annihilated by a sufficiently large power of $\m$. Therefore,  $\H^{0}_{\m}(M)$ can be considered as a graded module over the Artinian ring ${\ds A/\m^{k}A}$ for some sufficiently large $k$. 

\begin{Definition}{\rm Let $(R, \m)$ be a Noetherian local ring and let $A$ be a finitely generated graded $R$-algebra. Let $M$ be a finitely generated graded $A$-module. Then the {\em $\j$-multiplicity} of $M$ is 
\[  \j_{\m}(M) = \left\{  \begin{array}{ll}
{\ds \deg( \H_{\m}^{0}( M )  ) } \quad &\mbox{if} \;\; {\ds \dim ( \H_{\m}^{0}( M )  ) = \dim_{R} (M)  }, \vspace{0.1 in} \\
0 \quad &\mbox{otherwise.}
\end{array} \right. \]
}\end{Definition}

A special case of  the $\j$-multiplicity can be defined as follows: Let $(R, \m)$ be a Noetherian local ring, $I$ an arbitrary $R$-ideal, and $N$ a finite $R$-module of dimension $d$. Then the associated graded module ${M= \gr_{I}(N) = \bigoplus_{j=0}^{\infty} I^{j}N/I^{j+1}N}$ is a finitely generated graded module over the associated graded ring ${A=\gr_{I}(R) = \bigoplus_{j=0}^{\infty} I^{j}/I^{j+1}}$. In this case, the $\j$-multiplicity of  ${\ds \gr_{I}(N)}$ is sometimes called the $\j$-multiplicity of $I$ on $N$. Polini and Xie \cite{PX13} established the depth of the associated graded ring in terms of the $\j$-multiplicity of an ideal. 

\medskip
Pham and Vasconcelos introduced the $\jdeg$, extending the definition of $\j$-multiplicity by localization. We note that in contrast to other extensions of the classical multiplicity, such as the arithmetic degree, that requires $R$ to be a local ring, $\jdeg$ places no such restrictions on $R$.

\begin{Definition}{\rm \cite[Definition 1.1]{PV10}  Let $R$ be a Noetherian ring and $A$ a standard graded $R$-algebra. Let $M$ be a finitely generated graded $A$-module. For each prime ideal $\p$  of $R$,  we define
\[  \j_{\p}(M) = \left\{  \begin{array}{ll}
{\ds \deg( \H_{\p R_{\p} }^{0}( M_{\p} )  ) } \quad &\mbox{if} \;\; {\ds \dim ( \H_{\p R_{\p}}^{0}( M_{\p} )  ) = \dim_{R_{\p}} (M_{\p})  }, \vspace{0.1 in} \\
0 \quad &\mbox{otherwise.}
\end{array} \right. \]
Then $\jdeg$ of $M$ is the integer
\[ \jdeg(M) = \sum_{\tiny \p \in \Spec(R)} \j_{\p}(M). \]
}\end{Definition}

If $(R, \m)$ is an Artinian local ring, then  $\jdeg(M)$ is equal to the traditional multiplicity $\deg(M)$. One important application is that the $\jdeg$ provides estimates for the complexity of normalization.
 
\begin{Theorem}{\rm \cite[Theorem 2]{PV08}}\label{jdW}
Let $R$ be a Noetherian domain and $A$ a semistandard graded $R$-algebra of finite integral closure $\overline A$. Consider a sequence of distinct integral graded extensions
\[A \subseteq A_0 \rightarrow A_1 \rightarrow A_2 \cdots \rightarrow A_n=\overline A,\]
where the $A_i$ satisfy the $S_2$ condition of Serre. Then $n \leq \jdeg(\overline A/A)$.
\end{Theorem}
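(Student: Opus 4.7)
The plan is to bound $n$ by showing that each proper step $A_i \subsetneq A_{i+1}$ in the filtration contributes at least one unit to $\jdeg(\overline A/A)$. Two ingredients are required: a per-step positivity statement, and an additivity of $\jdeg$ along the chain of short exact sequences
\[ 0 \longrightarrow A_i/A \longrightarrow A_{i+1}/A \longrightarrow A_{i+1}/A_i \longrightarrow 0. \]

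For the per-step bound $\jdeg(A_{i+1}/A_i) \geq 1$, I would exploit the $S_2$ hypothesis on both $A_i$ and $A_{i+1}$. Taking local cohomology of the short exact sequence $0 \to A_i \to A_{i+1} \to A_{i+1}/A_i \to 0$ at a prime $\q$ of $A_i$ with $\h(\q) \geq 2$, both $\H^0_{\q}$ and $\H^1_{\q}$ vanish on $(A_i)_\q$ and $(A_{i+1})_\q$ by $S_2$; therefore $\H^0_{\q}((A_{i+1}/A_i)_\q) = 0$, so every associated prime of the nonzero torsion $A_i$-module $A_{i+1}/A_i$ has height exactly one in $A_i$. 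Choosing such a prime $\q$ and letting $\p = \q \cap R$, a direct computation from the definition of $\j_\p$ shows $\j_\p(A_{i+1}/A_i) \geq 1$, so $\jdeg(A_{i+1}/A_i) \geq 1$.

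Assembling these contributions using additivity of degrees of the graded modules $\H^0_{\p R_\p}(-)$ over the Artinian ring $A_\p/\p R_\p A_\p$, for short exact sequences of graded modules of matching dimension, yields
\[ \jdeg(\overline A/A) \ \geq \ \sum_{i=0}^{n-1} \jdeg(A_{i+1}/A_i) \ \geq \ n. \]
The main obstacle is precisely this additivity: since $\H^0_{\p R_\p}$ is only left exact, degrees do not add automatically in the long exact sequence of local cohomology. The $S_2$ hypothesis on every $A_i$ in the chain is what enables control of the connecting homomorphisms, forcing the depth-$1$ obstructions to live in strictly lower-dimensional pieces so that the top-dimensional parts of $\H^0_{\p R_\p}$ telescope additively along the filtration. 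Making this precise, so that the dimension condition in the definition of $\j_\p$ is preserved at each step and the $\j_\p$ contributions genuinely accumulate, is where the technical heart of the proof lies.
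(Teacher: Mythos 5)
The theorem you are proving is quoted in this survey from Pham--Vasconcelos without proof, so there is no in-paper argument to compare against; judged on its own terms, your proposal has a genuine gap, and it sits exactly where the content of the theorem lies. The per-step bound is fine, and in fact needs less than you use: if $A_i \subsetneq A_{i+1}$, choose $\p$ minimal in $\supp_R(A_{i+1}/A_i)$; then $(A_{i+1}/A_i)_{\p}$ is a nonzero graded module whose $R_{\p}$-support is $\{\p R_{\p}\}$, so $\H^0_{\p R_{\p}}\left((A_{i+1}/A_i)_{\p}\right)$ is the whole module, the dimension condition in the definition of $\j_{\p}$ holds automatically, and $\j_{\p}(A_{i+1}/A_i)\geq 1$; no $S_2$ hypothesis is needed for this step. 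What you do not prove is the assembly inequality $\jdeg(\overline A/A)\geq \sum_{i}\jdeg(A_{i+1}/A_i)$, and you say so yourself. This is not a routine loose end: $\jdeg$ is not additive, nor obviously superadditive, on short exact sequences, both because $\H^0_{\p R_{\p}}$ is only left exact (the cokernel of $\H^0_{\p R_{\p}}(M_{\p})\rar \H^0_{\p R_{\p}}(M''_{\p})$ only injects into $\H^1_{\p R_{\p}}(M'_{\p})$) and because the dimension condition in $\j_{\p}$ discards contributions that are not top-dimensional at $\p$; the quotients $A_{i+1}/A_i$ may have supports of different codimensions at a given $\p$, so their $\j_{\p}$'s need not all survive inside $\j_{\p}(\overline A/A)$. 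Saying that $S_2$ ``forces the obstructions into lower dimension so the top-dimensional parts telescope'' is a hope, not an argument, and the intermediate inequality as you state it is not justified and may fail.

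A workable repair avoids summing $\jdeg$ over the quotients. Either show a strict drop along the chain of modules $C_i=\overline A/A_i$: from the exact sequence $0\rar A_{i+1}/A_i\rar C_i\rar C_{i+1}\rar 0$ one wants $\jdeg(C_i)\geq \jdeg(C_{i+1})+1$, which telescopes to $n\leq \jdeg(\overline A/A_0)\leq\jdeg(\overline A/A)$ since $C_n=0$; or, equivalently, fix $\p$, bound $\j_{\p}(\overline A/A)$ below by the number of indices $i$ with $(A_i)_{\p}\neq (A_{i+1})_{\p}$, and then sum over $\p$. In either formulation the $S_2$ hypothesis on every $A_i$ is exactly what must be used to control the connecting maps $\H^0_{\p R_{\p}}\left((C_{i+1})_{\p}\right)\rar \H^1_{\p R_{\p}}\left((A_{i+1}/A_i)_{\p}\right)$, or to guarantee that the relevant quotients are unmixed of codimension one so that all their contributions are top-dimensional at the chosen $\p$ and genuinely accumulate. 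Until such a monotonicity-plus-strict-drop statement is proved, your proposal is an outline of a strategy rather than a proof of the theorem.
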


In \cite{PV08} Pham and Vasconcelos also explored how the values of  $\jdeg$ relate to other well studied invariants of the algebra. For instance, let $A$ be a semistandard graded domain over a field $k$. If $A$ satisfies $S_2$ and $A \neq \overline A$, then $\jdeg(\overline A/A)=\e_1(A)-\e_1(\overline A)$. Among other interesting results, using $\jdeg$ the authors generalize bounds of \cite{PUV} (see \cite[Theorem 3]{PV08}). In the sequel \cite{PV10} the focus is on $\jdeg$ of blow-up algebras. We denote the minimal number of generators of a module $M$ by $\nu(M)$.

\begin{Theorem}{\rm \cite[Proposition 4.1]{PV10}}
Let ${\ds (R, \m)}$ be a Noetherian local domain of positive dimension and let $M$ be a non-free module of rank r that is free on the punctured spectrum. Let $S$ be the symmetric algebra of $M$. Then
\[ \jdeg(S) = \left\{\begin{array}{ll}
1 + \j_{\m}(S) \quad &\mbox{if} \;\; {\ds  \nu(M) \geq \dim(R) + r, } \vspace{0.1 in} \\
1 \quad &\mbox{otherwise.}
\end{array} \right. \]
\end{Theorem}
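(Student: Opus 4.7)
The plan is to split $\jdeg(S) = \sum_{\p \in \Spec(R)} \j_\p(S)$ into contributions from three types of primes of $R$. For any $\p \neq \m$, the freeness of $M$ on the punctured spectrum gives $M_\p \cong R_\p^r$, so $S_\p \cong R_\p[T_1, \ldots, T_r]$. When $\p \neq (0)$ this polynomial ring is a domain and $\p R_\p$ is a nonzero ideal, so $H^0_{\p R_\p}(S_\p) = 0$ and $\j_\p(S) = 0$. When $\p = (0)$, localization gives $S_{(0)} = K[T_1, \ldots, T_r]$ with $K$ the fraction field of $R$; the zero ideal annihilates every element, so $H^0_{(0)}(S_{(0)}) = S_{(0)}$, whose Krull dimension is $r$ and whose multiplicity as a standard graded $K$-algebra is $1$. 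Hence $\j_{(0)}(S) = 1$, and $\jdeg(S) = 1 + \j_\m(S)$, so the task reduces to deciding when $\j_\m(S)$ vanishes.

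The next step is to invoke the Huneke--Rossi dimension formula,
\[
\dim S \;=\; \sup_{\q \in \Spec(R)}\{\dim(R/\q) + \nu(M_\q)\} \;=\; \max\{\dim R + r,\ \nu(M)\},
\]
using $\nu(M_\q) = r$ for $\q \neq \m$ and $\nu(M_\m) = \nu(M)$. The closed fiber $S/\m S \cong \Sym_{R/\m}(M/\m M)$ is a polynomial ring in $\nu(M)$ variables over the residue field, so $\m S$ is a prime of $S$ with $\dim(S/\m S) = \nu(M)$. Since every element of $H^0_\m(S)$ is killed by a power of $\m$, its associated primes in $S$ all contain $\m S$, giving $\dim H^0_\m(S) \leq \nu(M)$. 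If $\nu(M) < \dim R + r$, then $\dim S = \dim R + r > \nu(M) \geq \dim H^0_\m(S)$, so the dimension condition in the definition of $\j_\m$ fails and $\j_\m(S) = 0$, producing $\jdeg(S) = 1$. If instead $\nu(M) \geq \dim R + r$, then $\dim S = \nu(M) = \dim(S/\m S)$, so $\m S$ is a minimal, and hence associated, prime of $S$; choosing $s \in S$ with $\ann_S(s) = \m S$ places $s$ in $H^0_\m(S)$ and exhibits a cyclic submodule $S \cdot s \cong S/\m S$ of dimension $\nu(M)$, forcing $\dim H^0_\m(S) = \dim S$. Then $\j_\m(S) = \deg H^0_\m(S) > 0$ and $\jdeg(S) = 1 + \j_\m(S)$.

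The hardest part will be the dimension bookkeeping for $H^0_\m(S)$: both the Huneke--Rossi formula for $\dim \Sym(M)$ in the stated generality and the reduction of the dimension condition in the $\j_\m$ definition to the purely numerical criterion $\nu(M) \geq \dim R + r$. Once one recognizes that this criterion controls whether $\m S$ is a minimal prime of $S$, the dichotomy in the theorem follows without further computation, and the constant $1$ in both branches comes cleanly from the generic fiber over the fraction field.
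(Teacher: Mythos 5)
The survey states this theorem only as a quotation of \cite[Proposition 4.1]{PV10} and reproduces no proof, so there is no internal argument to compare yours against; judged on its own terms, your proof is correct. The decomposition of $\jdeg(S)=\sum_{\p}\j_{\p}(S)$ into the generic point, the punctured spectrum, and the closed point is exactly the natural route: freeness of $M$ off $\m$ makes $S_{\p}$ a polynomial ring over the domain $R_{\p}$, killing $\H^{0}_{\p R_{\p}}(S_{\p})$ for $\p\neq (0),\m$; the generic fiber $K[T_1,\dots,T_r]$ contributes exactly $1$; and the closed-point contribution is governed by whether $\dim \H^{0}_{\m}(S)$, which is at most $\dim S/\m S=\nu(M)$, can reach $\dim S$. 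Two small observations. First, in the branch $\nu(M)<\dim R+r$ you do not need the full Huneke--Rossi formula, only the easy lower bound $\dim S\geq \dim R+r$ (e.g.\ because $S$ surjects onto the Rees algebra of $M$, of dimension $\dim R+r$); the hard inequality $\dim S\leq \max\{\dim R+r,\nu(M)\}$ is genuinely needed only in the branch $\nu(M)\geq \dim R+r$, where it forces $\m S$ to be a minimal, hence associated, prime, and your choice of $s$ with $\ann_S(s)=\m S$ then correctly produces a copy of $S/\m S$ inside $\H^{0}_{\m}(S)$ of maximal dimension, so that $\j_{\m}(S)=\deg \H^{0}_{\m}(S)>0$. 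Second, the hypotheses that $M$ is non-free and $\dim R>0$ enter only lightly: positivity of $\dim R$ guarantees $(0)\neq\m$ so the two unit contributions are not double-counted, and non-freeness merely makes the first branch possible (if $\nu(M)\geq \dim R+r$ with $\dim R>0$ then $M$ is automatically non-free). These are presentation points, not gaps; the argument stands.
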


A more specific bound of $\jdeg$ of the symmetric algebra with additional conditions on $R$ and $M$ can be found in \cite[Theorem 4.2 and Proposition 4.3]{PV10}. An important property of $\jdeg$ is its invariance property. 

\begin{Theorem}{\rm \cite[Theorem 4.1.1]{P06} } Let $R$ be a reduced universally catenary Noetherian ring and $I$ an ideal containing a regular element. Let $A=R[It]$ be the Rees algebra of $I$ and let $\cl{A}$ be the integral closure of $A$. Let $B$ be a finitely generated graded $A$-algebra defined by a decreasing filtration such that ${\ds A \subsetneq B \subset \cl{A}}$. Then
\[ \j_{\p}(\gr(A)) = \j_{\p}(\gr(B)) \quad \mbox{for every} \;\; \p \in \Spec(R). \]
Moreover, if $B$ satisfies the Serre's condition $(\rmS_{2})$, then ${\ds \jdeg(\gr(A)) = \jdeg(\gr(B))}$. 
\end{Theorem}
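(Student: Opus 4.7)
The plan is to first establish the pointwise equality $\j_{\p}(\gr(A)) = \j_{\p}(\gr(B))$ for a fixed $\p \in \Spec(R)$ via a localization argument, and then obtain the global $\jdeg$ identity by summation under the $(\rmS_{2})$ hypothesis. First I would localize at $\p$: since $\j_{\p}(-)$ depends only on the data at $\p$, I may replace $R$ by $R_{\p}$ and assume $R$ is local with maximal ideal $\m = \p R_{\p}$. Under this reduction the inclusion $A \subsetneq B \subset \cl{A}$ is a finite integral extension of graded $R$-algebras --- integrality comes from $B \subset \cl A$, and finiteness from $B$ being a finitely generated integral $A$-algebra --- so $\gr(A)$ and $\gr(B)$ share a common $R$-dimension $d$, and the decreasing filtration $\{B_n\}$ defining $B$ satisfies $I^n \subseteq B_n \subseteq \cl{I^n}$ for every $n$.

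Next I would set up a graded comparison map $\varphi : \gr(A) \rightarrow \gr(B)$ assembled from the natural $R$-linear maps $I^n/I^{n+1} \to B_n/B_{n+1}$ induced by the filtration inclusions $I^n \hookrightarrow B_n$ and $I^{n+1} \hookrightarrow B_{n+1}$. The crucial observation is that both $\ker \varphi$ and $\coker \varphi$ have $R$-dimension strictly less than $d$. To see this, I would invoke the conductor $\mathfrak c = (A :_A B)$, which contains an $A$-regular element because $B$ is a finite birational integral extension of $A$; then each discrepancy module, namely $B_n / I^n$ and $(I^n \cap B_{n+1})/I^{n+1}$, is annihilated by a power of $\mathfrak c \cap R$, so its $R$-support lies inside $V(\mathfrak c \cap R)$, a proper closed subset of $\Spec(R)$.

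With this dimension bound on the comparison error, I would apply $H^0_{\m}(-)$ and use the long exact sequence of local cohomology to deduce that $H^0_{\m}(\gr(A))$ and $H^0_{\m}(\gr(B))$ differ only by modules of $R$-dimension strictly less than $d$. Since the $\j$-multiplicity is by definition the degree of $H^0_{\m}(-)$ taken in top dimension, subject to the dimension-matching proviso, it is insensitive to such a lower-dimensional discrepancy. This yields $\j_{\m}(\gr(A)) = \j_{\m}(\gr(B))$, hence $\j_{\p}(\gr(A)) = \j_{\p}(\gr(B))$ after unraveling the localization. Summing this identity over $\p \in \Spec(R)$ delivers $\jdeg(\gr(A)) = \jdeg(\gr(B))$, with the $(\rmS_{2})$ hypothesis on $B$ ensuring that $\jdeg(\gr(B))$ is a well-defined finite integer by constraining the set of primes at which $\j_{\p}$ is nonzero.

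The hardest step will be the dimension estimate on $\ker \varphi$ and $\coker \varphi$: one must rigorously extract a regular element from the conductor and translate integrality in $\cl A$ into uniform support bounds on the discrepancy modules across all filtration degrees, passing between the $I$-adic and $B$-filtrations while tracking the graded structure. A subtler point is to verify the dimension-matching clause $\dim H^0_{\m}(M_{\p}) = \dim_{R_{\p}} M_{\p}$ simultaneously for $M = \gr(A)$ and $M = \gr(B)$, so that one is not comparing a nonzero $\j$-value on one side against an artificially zero one on the other.
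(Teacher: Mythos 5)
The survey gives no proof of this statement (it is quoted from Pham's thesis \cite{P06}), so your argument can only be judged on its own terms, and its central step fails. The ``crucial observation'' that $\ker\varphi$ and $\coker\varphi$ have dimension strictly less than that of $\gr(A)$ is false in general, and the conductor argument offered for it does not give it: because the conductor of $A\subset B$ is a graded ideal, the element of $R$ you can actually extract from it lies in a power of $I$ (one gets $c\in I^N$ with $cB_n\subseteq I^n$), and every element of $I$ already annihilates each graded piece $I^n/I^{n+1}$; thus ``annihilated by $\mathfrak{c}\cap R$'' carries no information beyond what is true of $\gr(A)$ itself, whose $R$-support is already contained in the proper closed set $V(I)$. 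Moreover the dimension that governs $\j_{\p}$ is the dimension of the module over the graded algebra, not the dimension of its $R$-support, and for that dimension there is a concrete counterexample satisfying every hypothesis: take $R=k[[x,y]]$, $I=(x^2,y^2)$, $B=\overline{A}=R[\mathfrak{m}^2t]=\bigoplus_n \mathfrak{m}^{2n}t^n$ (a Cohen--Macaulay, hence $(\rmS_2)$, finite graded extension of $A$). There $\l\bigl((\ker\varphi)_n\bigr)=n+1$ and $\l\bigl((\coker\varphi)_n\bigr)=n$, so kernel and cokernel both have dimension $2=\dim\gr(A)=\dim\gr(B)$ and multiplicity $1$, while $\j_{\m}(\gr(A))=\j_{\m}(\gr(B))=4$. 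The theorem holds in this example only because the two top-dimensional contributions cancel, not because they are negligible; consequently no argument of the shape ``the discrepancy between $\gr(A)$ and $\gr(B)$ is lower-dimensional and $\j$ is insensitive to it'' can prove the statement.

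Two further soft spots: $H^0_{\m}$ is only left exact, and the connecting maps in the local cohomology sequence land in $H^1_{\m}$ of the kernel, which is not finitely generated, so ``$H^0_{\m}(\gr(A))$ and $H^0_{\m}(\gr(B))$ differ by lower-dimensional modules'' does not follow from the long exact sequence even when the dimension bound holds; the correct tool would be the additivity/associativity formula for the $\j$-multiplicity, but the example above shows even that cannot rescue this strategy. Also, when $R$ is not normal the components $B_n$ are $R$-submodules of $\overline{R}$ rather than ideals of $R$, so $I^n\subseteq B_n\subseteq \overline{I^n}$ needs more care. A workable proof must produce the cancellation directly --- for instance by expressing both $\j$-multiplicities as limits of lengths of $H^0_{\m}(R/I^{n})$ and $H^0_{\m}(R/B_{n})$ (Nishida--Ulrich type formulas) and using an Amao--Rees type estimate $\l(B_n/I^n)=O(n^{d-1})$ coming from integrality, or by the Achilles--Manaresi bigraded description together with invariance of $\j$ under integral dependence of filtrations at each relevant prime --- rather than by discarding a supposedly lower-dimensional error.
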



\section{Homological Degree}\label{HDsection}

The homological degree was introduced by Vasconcelos as an extension of
the usual notion of multiplicity, aimed to provide a numerical signature for
a module $M$ when $M$ is not necessarily Cohen-Macaulay. It is defined recursively on the dimension of the module, which allows,
in principle, for its calculation by symbolic computer packages.

\begin{Definition}\label{hd}{\rm \cite[Definition 2.8]{V98-1} Let $(R, \m)$ be a Gorenstein local ring of dimension $d$. Let $I$ be an $\m$-primary ideal of $R$. Let $M$ be a finitely generated $R$-module of dimension $s$. The {\em homological degree} of $M$ with respect to $I$ is defined as follows:
\[ \hdeg_{I}(M) = \left\{\begin{array}{ll}
{\ds \l(M)}  \quad & \mbox{if} \;\; {\ds s =0, } \vspace{0.1 in} \\
{\ds \e_{0}(I, M) + \sum_{i=d-s+1}^{d} {{s-1}\choose{i-d+s-1}} \hdeg_{I} \left( \Ext^{i}_{R}(M, R) \right)   }  & \mbox{if} \;\;  {\ds  s > 0. } 
\end{array} \right. \] 
When $I=\m$, we simply write ${\ds \hdeg(M)}$. 
}\end{Definition}

If $(R, \m)$ is a Gorenstein local ring, by the Local Duality Theorem \cite[Theorem 3.5.8]{BH}, for all integers $i$, there exist natural isomorphisms
\[ \Ext^{i}_{R} (M, R) \simeq \Hom_{R}( H^{d-i}_{\m}(M), \, E),\]
where $E=E(k)$ denotes the injective envelope of the residue field $k$.

\medskip
Goto and Ozeki \cite{GO14} introduced an alternative definition of a homological degree. 

\begin{Definition}\label{hd1}{\rm \cite[Definition 1.1]{GO14} Let $(R, \m)$ be a Gorenstein local ring with infinite residue field.  Let $I$ be an $\m$-primary ideal of $R$ and $M$ a finitely generated $R$-module of dimension $s$. 
The {\em homological degree} of $M$ with respect to $I$ is defined as follows:
\[ \hdeg_{I}(M) = \left\{\begin{array}{ll}
{\ds \l(M)}  \quad & \mbox{if} \;\; {\ds s=0 } \vspace{0.1 in} \\
{\ds  \e_{0}(I, M) + \sum_{i=0}^{s-1} {{s-1}\choose{i}} \hdeg_{I} \left( \Hom_{R}(H^{i}_{\m}(M), E) \right)   }  & \mbox{if} \;\;  {\ds  s > 0. } 
\end{array} \right. \] 
}\end{Definition}

By \cite[Fact 2.1]{GO14}, ${\ds \dim\left(  \Hom_{R}(H^{i}_{\m}(M), E)  \right) \leq i}$ for each $i$. 
Therefore the definition of the homological degree is a recursive definition. Moreover, if $\dim(R)=\dim(M)$, then  the definitions \ref{hd} and \ref{hd1} coincide. Note that if $M$ is Cohen-Macaulay, then ${\ds \hdeg_{I}(M) = \e_{0}(I, M)  }$.

\medskip

 In the original definition of the homological degree  \cite[Definition 2.8]{V98-1},  Vasconcelos supposed that $R$ is a graded algebra, $S$ a Gorenstein graded algebra mapping onto $R$ and $M$ is a finitely generated graded module of $R$, and he used ${\ds \Ext_{S}^{i}(M, S)}$ in the definition.
  In \cite{GO14}, the authors supposed that $R$ is $\m$-adically complete. In this case, $R$ is a homomorphic image of a Gorenstein local ring $S$. Thus, by passing to $S$, without loss of generality, they assumed that $R$ is a Gorenstein ring. Throughout this section we may assume that $R$ is a Gorenstein local ring, not only for simplicity, but also because it enables us to use both definitions (\cite[Definition 2.8]{V98-1}, \cite[Definition 1.1]{GO14}) due to the Local Duality Theorem.

\begin{Remark}\label{hdsmall}{\rm The following are the homological degrees of a module $M$ of small dimension. 

\begin{enumerate}[(1)]
\item Suppose $\dim(M)=1$. Then 
\[  \hdeg_{I}(M) = \e_{0}(I, M) + \l \left( H^{0}_{\m}(M)  \right).\]
\item Suppose $\dim(M)=2$. Let ${\ds N= \Hom(H^{1}_{\m}(M), E) }$. Then
\[ \hdeg_{I}(M) = \left\{  \begin{array}{ll}
{\ds \e_{0}(I, M) + \l \left( H^{0}_{\m}(M)  \right) + \l \left( N \right)} \quad & \mbox{if} \;\;  \dim(N) =0,  \vspace{0.1 in} \\ {\ds  \e_{0}(I, M) + \l \left( H^{0}_{\m}(M)  \right) + \e_{0}(I, N) + \l \left( H^{0}_{\m}(N)  \right)}  \quad & \mbox{if} \;\;  \dim(N) =1.  
\end{array} \right. \]
Also, by \cite[Proposition 3.5]{V98}, ${\ds \hdeg(M)= \adeg(M) + \l \left( \Ext^{2}_{R} \left( \Ext^{1}_{R}(M,R), R  \right)  \right) }$. 
\end{enumerate} 
}\end{Remark}


\medskip

A finitely generated module $M$ is said to be {\em generalized Cohen-Macaulay} if $M_{\p}$ is Cohen-Macaulay for every non-maximal prime ideal $\p$. Recall that $M$ is generalized Cohen-Macaulay if and only if $H^{i}_{\m}(M)$ is finitely generated for all $i < \dim(M)$. By using the Matlis Duality, we obtain the following.

\begin{Proposition} 
Let $(R, \m)$ be a Gorenstein local ring and $I$ an $\m$-primary ideal. 
If $M$ is a generalized Cohen-Macaulay module and $\dim(M)= s$, then
\[ \hdeg_{I}(M) = \e_{0}(I, M) + \sum_{i=0}^{s-1} {{s-1}\choose{i}}\, \l \left( H^{i}_{\m} (M)  \right). \]
\end{Proposition}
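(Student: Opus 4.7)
The plan is to apply Definition \ref{hd1} directly and use two standard facts about generalized Cohen-Macaulay modules to collapse each recursive summand into a finite length.

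First, I would recall the cohomological characterization stated just before the proposition: $M$ is generalized Cohen-Macaulay if and only if $H^{i}_{\m}(M)$ has finite length for every $i<s$. The reason, briefly, is that Cohen-Macaulayness of $M_{\p}$ at every non-maximal $\p$ forces $H^{i}_{\m}(M)_{\p}\cong H^{i}_{\p R_{\p}}(M_{\p})=0$ for $i<s$, so $\supp(H^{i}_{\m}(M))\subseteq\{\m\}$; combined with Artinianness of local cohomology, this gives finite length. Second, Matlis duality $D(-)=\Hom_{R}(-,E)$ is exact on Artinian $R$-modules and preserves length. Consequently, for each $0\leq i\leq s-1$, the module
\[ N_{i} := \Hom_{R}\!\left(H^{i}_{\m}(M),\,E\right) \]
has finite length equal to $\l(H^{i}_{\m}(M))$, and in particular $\dim N_{i}=0$.

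With these two facts in hand, I would simply unroll Definition \ref{hd1}:
\[ \hdeg_{I}(M) = \e_{0}(I,M) + \sum_{i=0}^{s-1}\binom{s-1}{i}\hdeg_{I}(N_{i}). \]
Because each $N_{i}$ has dimension zero, the base case $s=0$ of the definition applies immediately and gives $\hdeg_{I}(N_{i})=\l(N_{i})=\l(H^{i}_{\m}(M))$. The recursion therefore terminates at the first level, and substitution produces the claimed formula.

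There is no genuine obstacle here; the proposition is essentially a consistency check showing that Vasconcelos' recursive degree specializes, on the generalized Cohen-Macaulay class, to the St\"{u}ckrad-Vogel-style sum alluded to earlier in the survey. The only step requiring care is the passage from $H^{i}_{\m}(M)$ to $N_{i}$ via Matlis duality: one must know that the local cohomology modules are not merely Artinian but of finite length, and that is precisely where the generalized Cohen-Macaulay hypothesis is used. Everything else is bookkeeping inside Definition \ref{hd1}.
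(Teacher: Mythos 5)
Your argument is correct and matches the paper's intended proof: the paper offers no details beyond ``by using the Matlis Duality,'' and your unrolling of Definition~\ref{hd1} together with the finite length of $H^{i}_{\m}(M)$ for $i<s$ and length-preservation under $\Hom_{R}(-,E)$ is exactly that argument. No gaps.
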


The integer  ${\ds \sum_{i=0}^{s-1} {{s-1}\choose{i}}\, \l \left( H^{i}_{\m} (M)  \right)}$ is called the St\"{u}ckrad-Vogel invariant of $M$ and is denoted by ${\ds \mathbb{I}(M)}$. Thus, if $M$ is a generalized Cohen-Macaulay module, then ${\ds \hdeg_{I}(M) = \e_{0}(I, M) + \mathbb{I}(M) }$.  A family of elements ${\ds \xx=x_{1}, \ldots, x_{s}}$ in the maximal ideal $\m$ is said to be a {\em system of parameters} for $M$ if ${\ds \dim(M/(\xx)M) =0}$, where $s= \dim(M)$. We call an $R$-ideal $Q$ a {\em parameter ideal} for $M$ if there exists a system of parameters ${\ds \xx=x_{1}, \ldots, x_{s}}$ for $M$ such that $Q=(\xx)$. 
A finitely generated module $M$ is said to be {\em Buchsbaum}  if ${\ds  \l(M/QM) - \e_{0}(Q, M) = \mathbb{I}(M)  }$ for every parameter ideal $Q$ of $M$. It is well known that a Buchbaum module is a generalized Cohen-Macaulay module. Thus, we obtain the following formula for $\hdeg_{I}(M)$ in terms of multiplicities.

\begin{Proposition}
Let $(R, \m)$ be a Gorenstein local ring and $I$ an $\m$-primary ideal. 
If $M$ is a Buchsbaum module, then for every parameter ideal $Q$ of $M$ we have
\[ \hdeg_{I}(M) = \e_{0}(I, M) + \l(M/QM) - \e_{0}(Q, M). \]
\end{Proposition}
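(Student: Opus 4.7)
The proposition is essentially an immediate consequence of the two facts already isolated in the text, so my plan is simply to chain them together. The plan is to reduce the statement to the generalized Cohen-Macaulay case via the hint already recorded in the paragraph preceding the proposition, namely that every Buchsbaum module is generalized Cohen-Macaulay.

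First I would invoke the previous proposition. Since $M$ is Buchsbaum, $M$ is generalized Cohen-Macaulay, so $H^{i}_{\m}(M)$ has finite length for all $i < s = \dim(M)$, and the proposition gives
\[ \hdeg_{I}(M) \;=\; \e_{0}(I, M) \;+\; \sum_{i=0}^{s-1} \binom{s-1}{i}\, \l\!\left( H^{i}_{\m}(M) \right) \;=\; \e_{0}(I, M) + \mathbb{I}(M). \]
Next I would apply the defining property of a Buchsbaum module: for every parameter ideal $Q$ of $M$ we have
\[ \l(M/QM) - \e_{0}(Q, M) \;=\; \mathbb{I}(M). \]
Substituting this identity into the previous display yields the desired formula
\[ \hdeg_{I}(M) \;=\; \e_{0}(I, M) + \l(M/QM) - \e_{0}(Q, M). \]

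There is no real obstacle here; the content of the statement is already packaged into the preceding proposition and the definition of Buchsbaum. The only point that deserves a word of emphasis is that the right-hand side is manifestly independent of the choice of parameter ideal $Q$, which matches the fact that the left-hand side depends only on $M$ and $I$. In an expository write-up I might also mention that this provides a concrete and computable expression of $\hdeg_{I}(M)$ in the Buchsbaum case, purely in terms of classical multiplicities and colengths of parameter ideals, requiring no cohomology computation a posteriori.
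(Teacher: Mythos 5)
Your proposal is correct and follows exactly the route the paper intends: the statement is presented there as a direct consequence of the preceding proposition for generalized Cohen-Macaulay modules, $\hdeg_{I}(M)=\e_{0}(I,M)+\mathbb{I}(M)$, combined with the defining identity $\l(M/QM)-\e_{0}(Q,M)=\mathbb{I}(M)$ of a Buchsbaum module, using that Buchsbaum modules are generalized Cohen-Macaulay. Nothing is missing.
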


\subsection*{Properties of  homological degrees} Now we observe general properties of the homological degree. Particularly, we are interested in bounds for $\hdeg$, how $\hdeg$ works with short exact sequences, and most importantly how $\hdeg$ behaves under hyperplane sections. 

\begin{Proposition}{\rm \cite[Proposition 7.6]{Chern1}} Let $(R, \m)$ be  a Gorenstein local ring and $I$ an $\m$-primary ideal such that $\m^{r} \subset I$. Let $M$ be a $R$-module of dimension $s$. Then
\[ \hdeg_{I}(M) \leq r^{s} \, \deg(M) + r^{s-1} \, \left( \hdeg(M) - \deg(M)  \right). \]
\end{Proposition}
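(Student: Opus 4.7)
The plan is to proceed by induction on $s = \dim(M)$. The base case $s = 0$ is immediate, since then $\hdeg_I(M) = \l(M) = \deg(M) = \hdeg(M)$ and the inequality degenerates to an equality (the $r^{s-1}$ term harmlessly multiplies zero). For the inductive step, the first input I would establish is a multiplicity comparison: since $\m^r \subset I$, we have $\m^{r(n+1)} \subset I^{n+1}$ for every $n$, hence $\l(M/I^{n+1}M) \leq \l(M/\m^{r(n+1)}M)$. Comparing the leading terms of the Hilbert polynomials in degree $s$ yields
\[ \e_0(I, M) \leq r^s \deg(M). \]

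Next, I would expand $\hdeg_I(M)$ using Definition \ref{hd}:
\[ \hdeg_I(M) = \e_0(I, M) + \sum_{i=d-s+1}^{d} \binom{s-1}{i-d+s-1} \hdeg_I\bigl(\Ext^i_R(M, R)\bigr). \]
Each $M_i := \Ext^i_R(M, R)$ satisfies $s_i := \dim M_i \leq d - i \leq s-1$, so the inductive hypothesis applies to it and delivers
\[ \hdeg_I(M_i) \leq r^{s_i}\deg(M_i) + r^{s_i-1}\bigl(\hdeg(M_i) - \deg(M_i)\bigr) = r^{s_i-1}\bigl((r-1)\deg(M_i) + \hdeg(M_i)\bigr). \]
Using the structural inequality $\deg(M_i) \leq \hdeg(M_i)$ (immediate from Definition \ref{hd}, as $\hdeg$ adds nonnegative contributions to the multiplicity), the right-hand side collapses to $r^{s_i}\hdeg(M_i) \leq r^{s-1}\hdeg(M_i)$.

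To finish, I would substitute these inequalities into the expansion of $\hdeg_I(M)$ and compare with the same expansion specialized to $I = \m$, which reads
\[ \hdeg(M) - \deg(M) = \sum_{i=d-s+1}^{d} \binom{s-1}{i-d+s-1} \hdeg(M_i). \]
Combining gives
\[ \hdeg_I(M) \leq r^s \deg(M) + r^{s-1}\sum_{i=d-s+1}^{d}\binom{s-1}{i-d+s-1} \hdeg(M_i) = r^s \deg(M) + r^{s-1}\bigl(\hdeg(M) - \deg(M)\bigr), \]
as required.

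The main obstacle is the transition from the inductive bound on $\hdeg_I(M_i)$ to the uniform bound $r^{s-1}\hdeg(M_i)$: the exponent $s_i - 1$ produced by the inductive hypothesis on a potentially much lower-dimensional $M_i$ must be absorbed into $s-1$, and the mixed combination $(r-1)\deg(M_i) + \hdeg(M_i)$ must be replaced by a single scalar multiple of $\hdeg(M_i)$. Both adjustments are accomplished simultaneously by the basic inequality $\deg \leq \hdeg$, which is the essential structural feature of Vasconcelos' recursive definition; once that step is in place, the remainder is routine algebraic unwinding.
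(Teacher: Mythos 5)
Your proof is correct: the induction on $\dim(M)$, the leading-coefficient comparison $\e_{0}(I,M)\leq r^{s}\deg(M)$ obtained from $\m^{r(n+1)}\subseteq I^{n+1}$, and the absorption of the recursive terms via $\deg(M_i)\leq\hdeg(M_i)$ and $r^{s_i}\leq r^{s-1}$ all check out, with the specialization of the recursion to $I=\m$ closing the argument. The survey only cites \cite{Chern1} for this proposition without reproducing a proof, and your induction on dimension through the recursive definition is essentially the standard argument of the cited source.
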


\begin{Proposition}\label{V98-3-18}{\rm (\cite[Proposition 3.18]{V98}, \cite[Proposition 2.84]{V2}, \cite[Lemma 2.4]{GO14}) }
Let $(R, \m)$ be  a Gorenstein local ring and $I$ an $\m$-primary ideal.  Let ${\ds 0 \rar L \rar M \rar N \rar 0}$ be an exact sequence of finitely generated $R$-modules. Then
\begin{enumerate}[{\rm (1)}]
\item If ${\ds \l(L) < \infty}$, then ${\ds \hdeg_{I}(M) = \hdeg_{I}(L) + \hdeg_{I}(N)}$.
\item If ${\ds \l(N) < \infty}$, then ${\ds \hdeg_{I}(M) \leq \hdeg_{I}(L) + \hdeg_{I}(N)}$.
\item If ${\ds \l(N) < \infty}$ and ${\ds \dim(M) =s}$, then ${\ds \hdeg_{I}(L) \leq \hdeg_{I}(M) + (s-1) \hdeg_{I}(N)}$.
\end{enumerate}
\end{Proposition}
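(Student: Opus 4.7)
The plan is to prove the three assertions simultaneously by induction on $s = \dim(M)$, using the recursive form of the homological degree from Definition \ref{hd1}. The base case $s = 0$ forces $L$, $M$, and $N$ to have finite length, so each $\hdeg_I$ reduces to $\l$ and every claim collapses to additivity of length on the given short exact sequence. For the inductive step, the main tool is the long exact sequence of local cohomology
\[
\cdots \to H^{i-1}_\m(N) \to H^i_\m(L) \to H^i_\m(M) \to H^i_\m(N) \to H^{i+1}_\m(L) \to \cdots,
\]
combined with the Matlis dual functor $D(-) := \Hom_R(-, E)$, which is exact and length-preserving. Since $\dim D(H^i_\m(-)) \leq i \leq s - 1$ at each slot appearing in the recursive formula, any inequality I force among the duals lives in strictly lower dimension and is accessible to the inductive hypothesis.

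For (1), the hypothesis $\l(L) < \infty$ gives $H^i_\m(L) = 0$ for $i \geq 1$, so the long exact sequence degenerates to a short exact sequence $0 \to L \to H^0_\m(M) \to H^0_\m(N) \to 0$ together with isomorphisms $H^i_\m(M) \cong H^i_\m(N)$ for $i \geq 1$. Applying $D$ and substituting into the recursive formula slot by slot, and using $\e_0(I, M) = \e_0(I, N)$ along with additivity of length at the $H^0$ slot, I obtain $\hdeg_I(M) = \hdeg_I(L) + \hdeg_I(N)$ directly.

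For (2) and (3), the hypothesis $\l(N) < \infty$ kills $H^i_\m(N)$ for $i \geq 1$ and produces $H^i_\m(M) \cong H^i_\m(L)$ for $i \geq 2$ together with a five term exact sequence
\[
0 \to H^0_\m(L) \to H^0_\m(M) \to N \to H^1_\m(L) \to H^1_\m(M) \to 0.
\]
Dualizing and introducing $K := \image(D(H^1_\m(L)) \to D(N))$, a finite length module with $\l(K) \leq \l(N)$, I split the dualized sequence into three short exact sequences; the decisive one is
\[
0 \to D(H^1_\m(M)) \to D(H^1_\m(L)) \to K \to 0
\]
in dimension at most $1 < s$ for $s \geq 2$, while the case $s = 1$ is a direct $H^0$ slot calculation. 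The inductive hypothesis applied to this short exact sequence yields two complementary bounds: assertion (3) of the induction gives $\hdeg_I(D(H^1_\m(M))) \leq \hdeg_I(D(H^1_\m(L)))$, which combined with the length identity $\l(H^0_\m(M)) = \l(H^0_\m(L)) + \l(N) - \l(K)$ at the $H^0$ slot produces (2); assertion (2) of the induction gives $\hdeg_I(D(H^1_\m(L))) \leq \hdeg_I(D(H^1_\m(M))) + \l(K)$, which after multiplication by the coefficient $\binom{s-1}{1} = s - 1$ and the submodule inequality $\l(H^0_\m(L)) \leq \l(H^0_\m(M))$ at the $H^0$ slot produces (3).

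The hardest part will be tracking the directions of inequality through the three short exact sequences coming out of the five term sequence, and confirming that every invocation of the induction hypothesis is genuinely dimension decreasing, so that the simultaneous induction closes without circularity. The factor $s - 1$ appearing in (3) is not cosmetic; it is forced by the binomial coefficient $\binom{s-1}{1}$ weighting the $H^1$ slot in the recursive definition, which is precisely where the finite length error $K$ coming from $N$ is routed.
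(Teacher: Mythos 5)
Your proof is correct and follows essentially the same route as the proofs in the sources the paper cites for this proposition (the paper itself gives no inline proof): the long exact sequence of local cohomology combined with Matlis duality, the observation that only the slots $i=0,1$ in the recursive formula of Definition \ref{hd1} can differ between the modules involved, and the dimension bound $\dim \Hom_R(H^1_{\m}(-),E)\leq 1$ at the remaining nontrivial slot. The one step worth spelling out is that your appeal to item (3) on $0 \rar \Hom_R(H^1_{\m}(M),E) \rar \Hom_R(H^1_{\m}(L),E) \rar K \rar 0$ gives the clean inequality $\hdeg_I(\Hom_R(H^1_{\m}(M),E)) \leq \hdeg_I(\Hom_R(H^1_{\m}(L),E))$ precisely because these modules have dimension at most one, so the correction term $(\dim-1)\,\l(K)$ is non-positive; equivalently, in dimension at most one $\hdeg_I$ is monotone along finite-colength inclusions, since $\e_0$ is unchanged and $H^0_{\m}$ can only shrink.
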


%
%

If $R$ is a standard Gorenstein graded algebra and  $M$ is a finitely generated graded $R$-module of depth at least $1$, then Vasconcelos proved that ${\ds \hdeg(M/hM) \leq \hdeg(M)}$ for any generic hyperplane section $h$  on $M$ \cite[Theorem 2.13]{V98-1}. Goto and Ozeki extended it to a local ring case. We denote the Rees algebra of an ideal $I$ by ${\ds \mathcal{R}(I)= \bigoplus_{n=0}^{\infty} I^{n} t^{n} }$. We write ${\ds \mathcal{R}_{+}(I) = \bigoplus_{n=1}^{\infty} I^{n} t^{n} }$ and set
\[ \Proj \left( \mathcal{R}(I) \right)  = \left\{ \p \mid \p \; \mbox{is a graded prime ideal of $\mathcal{R}(I)$ such that $\mathcal{R}_{+}(I)  \not \subseteq \p$ }   \right\}. \]
Let ${\ds f: I \rar \mathcal{R}(I)}$ be a map given by ${\ds f(x)=xt}$. 

\begin{Theorem}\label{GO14-2-6}{\rm \cite[Lemma 2.6]{GO14} } 
Let $(R, \m)$ be a Gorenstein local ring and $I$ an $\m$-primary ideal. Let $M$ be a finitely generated $R$-module. There exists a finite subset ${\ds \mathcal{F} \subseteq \Proj \left( \mathcal{R}(I) \right)  }$ such that  
\begin{enumerate}[{\rm (1)}]
\item every element of ${H= I \setminus \bigcup_{\p \in \mathcal{F}} \left( f^{-1}(\p) + \m I  \right) }$ is superficial for $M$ with respect to $I$, and
\item ${\ds \hdeg_{I}(M/a M) \leq \hdeg_{I}(M)}$ for each $a \in H$.
\end{enumerate}
\end{Theorem}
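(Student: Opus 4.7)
The plan is to prove (1) and (2) simultaneously by induction on $s = \dim_{R} M$. The base case $s = 0$ is handled by taking $\mathcal{F} = \emptyset$, since then $M$ has finite length and $\hdeg_{I}(M/aM) = \l(M/aM) \leq \l(M) = \hdeg_{I}(M)$ for every $a \in I$.

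For the inductive step, invoke Local Duality to set $N_{i} := \Hom_{R}(H^{i}_{\m}(M), E)$ for $0 \leq i \leq s - 1$; by \cite[Fact 2.1]{GO14} one has $\dim N_{i} \leq i \leq s - 1$, so the inductive hypothesis supplies a finite set $\mathcal{F}_{i} \subset \Proj(\mathcal{R}(I))$ realizing the conclusion for $N_{i}$. Let $\mathcal{F}_{M}$ consist of the associated primes of the graded $\mathcal{R}(I)$-module $\bigoplus_{n \geq 0} I^{n}M \, t^{n}$ that do not contain $\mathcal{R}_{+}(I)$, and define
\[ \mathcal{F} := \mathcal{F}_{M} \cup \bigcup_{i=0}^{s-1} \mathcal{F}_{i}. \]
A standard graded prime-avoidance argument applied to the image of $a$ in $I/\m I$ shows that every $a \in H$ is simultaneously superficial for $M$ and for each $N_{i}$ with respect to $I$, which establishes (1).

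For (2), first apply Proposition \ref{V98-3-18}(1) to the short exact sequence $0 \to H^{0}_{\m}(M) \to M \to M/H^{0}_{\m}(M) \to 0$ to reduce to the case $\depth M > 0$, so that any superficial $a \in H$ is $M$-regular. The long exact sequence in local cohomology associated to $0 \to M \xrightarrow{a} M \to M/aM \to 0$ splits into
\[ 0 \to H^{i}_{\m}(M)/a\, H^{i}_{\m}(M) \to H^{i}_{\m}(M/aM) \to 0 :_{H^{i+1}_{\m}(M)} a \to 0, \]
whose Matlis dual reads
\[ 0 \to N_{i+1}/a\, N_{i+1} \to \Hom_{R}(H^{i}_{\m}(M/aM), E) \to 0 :_{N_{i}} a \to 0. \]
Since $a$ is superficial on $N_{i}$, the submodule $0 :_{N_{i}} a$ has finite length, so Proposition \ref{V98-3-18}(2) yields
\[ \hdeg_{I}\bigl(\Hom_{R}(H^{i}_{\m}(M/aM), E)\bigr) \leq \hdeg_{I}(N_{i+1}/a\, N_{i+1}) + \l(0 :_{N_{i}} a). \]
The inductive hypothesis on $N_{i+1}$ gives $\hdeg_{I}(N_{i+1}/a\, N_{i+1}) \leq \hdeg_{I}(N_{i+1})$, while the $j = 0$ summand of the recursive definition furnishes $\l(0 :_{N_{i}} a) \leq \l(H^{0}_{\m}(N_{i})) \leq \hdeg_{I}(N_{i})$. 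Substituting into the Goto-Ozeki formula for $\hdeg_{I}(M/aM)$, using $\e_{0}(I, M/aM) = \e_{0}(I, M)$ for $a$ regular and superficial, and reindexing the two resulting sums via Pascal's identity $\binom{s-2}{j-1} + \binom{s-2}{j} = \binom{s-1}{j}$ reassembles the right-hand side as $\hdeg_{I}(M)$.

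The principal obstacle is the combinatorial bookkeeping in this last step: the two contributions associated to each $N_{j}$ (one arising from a $0 :_{N_{j-1}} a$ term, the other from a $N_{j+1}/a\, N_{j+1}$ term in adjacent dual sequences) must reassemble precisely into the coefficient $\binom{s-1}{j}$ of the definition of $\hdeg_{I}(M)$, with no surplus at the endpoints $j = 0$ and $j = s - 1$; the conventions $\binom{s-2}{-1} = \binom{s-2}{s-1} = 0$ make the endpoint coefficients work out. A secondary delicate point is ensuring that the reduction to $\depth M > 0$ via Proposition \ref{V98-3-18}(1) is compatible with the inductive hypotheses for the $N_{i}$, which is the case because the finite-length summand $H^{0}_{\m}(M)$ is Matlis dual to $N_{0}$ and contributes identically to both sides of the target inequality.
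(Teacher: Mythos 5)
Your proposal is correct and follows essentially the same route as the source the paper cites for this statement (the paper itself gives no proof, quoting \cite[Lemma 2.6]{GO14}): induction on $\dim M$, a set $\mathcal{F}$ assembled from the Rees-module associated primes of $M$ together with the sets furnished by the inductive hypothesis for the Matlis duals $N_i$ of the local cohomology modules, the Matlis-dualized long exact sequence for $0\to M\xrightarrow{a}M\to M/aM\to 0$, the additivity/subadditivity properties of $\hdeg_I$ on sequences with finite-length ends, and Pascal's rule to reassemble the binomial coefficients. The only point left implicit is the reduction to $\depth M>0$: one should either note that an element superficial for $M$ is superficial for $M/H^0_{\m}(M)$, or simply enlarge $\mathcal{F}$ by the corresponding primes for $M/H^0_{\m}(M)$, after which the finite-length kernel of $M/aM\to \bigl(M/H^0_{\m}(M)\bigr)/a\bigl(M/H^0_{\m}(M)\bigr)$ has length at most $\l\left(H^0_{\m}(M)\right)$ and the inequality descends as you indicate.
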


\begin{Definition}{\rm
Let $(R, \m)$ be a Gorenstein local ring and $I$ an $\m$-primary ideal. Let $M$ be a finitely generated $R$-module. 
An element $a \in R$ (or a sequence $\aa=a_{1}, \ldots, a_{s}$ of elements of $R$) is said to be  {\em superficial} relative to $\hdeg_{I}(M)$ if $a$ (or the sequence $\aa$) is superficial for $M$ with respect to $I$ and ${\ds \hdeg_{I}(M/a M) \leq \hdeg_{I}(M) }$ (or ${\ds \hdeg_{I}(M/\aa M) \leq \hdeg_{I}(M)}$ respectively). 
}\end{Definition}

\begin{Corollary}\label{Chern1-7-2}{\rm \cite[Theorem 7.2]{Chern1}} 
Let $(R, \m)$ be a Gorenstein local ring and $I$ an $\m$-primary ideal. Let $M$ be a finitely generated $R$-module of $\dim(M) \geq 2$. Let $\aa=a_{1}, \ldots, a_{r}$ be a superficial sequence relative to  $\hdeg_{I}(M)$. If $r < \dim(M)$, then 
\[ \l \left( H^{0}_{\m} \left(M/(\aa)M \right) \right) \leq \hdeg_{I}(M) -  \e_{0}(I, M). \]
\end{Corollary}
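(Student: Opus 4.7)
The plan is to apply the recursive formula for the homological degree (Definition \ref{hd1}) to the module $N := M/(\aa)M$, whose dimension is $t := \dim(M)-r \geq 1$ by the assumption $r < \dim(M)$. Since $t \geq 1$,
\[ \hdeg_I(N) = \e_0(I, N) + \sum_{i=0}^{t-1}\binom{t-1}{i}\hdeg_I\bigl(\Hom_R(H^i_\m(N), E)\bigr), \]
and the $i=0$ summand is exactly $\l(H^0_\m(N))$: the module $H^0_\m(N)$ has finite length, Matlis duality preserves length on finite-length modules, and for a module of dimension zero the homological degree coincides with the length. The remaining summands $(1 \leq i \leq t-1)$ are nonnegative, so rearranging yields the preliminary estimate
\[ \l(H^0_\m(N)) \;\leq\; \hdeg_I(N) - \e_0(I, N). \]

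Next I would invoke the two properties of the sequence $\aa$. By the definition of a sequence superficial relative to $\hdeg_I(M)$ stated just before the corollary, we have directly $\hdeg_I(N) \leq \hdeg_I(M)$. On the other hand, because $\aa$ is a superficial sequence for $M$ with respect to $I$, the classical result that a superficial element preserves the multiplicity on a module of dimension at least two can be applied inductively through each step $M/(a_1,\ldots,a_{i-1})M \rightsquigarrow M/(a_1,\ldots,a_i)M$. Each intermediate module has dimension $\dim(M)-(i-1) \geq 2$ since $i \leq r \leq \dim(M)-1$, so the hypothesis of the classical result is met at every step, yielding $\e_0(I, N) = \e_0(I, M)$. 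Assembling the pieces:
\[ \l(H^0_\m(N)) \;\leq\; \hdeg_I(N) - \e_0(I, N) \;\leq\; \hdeg_I(M) - \e_0(I, M), \]
which is the required bound.

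The main obstacle is justifying the equality $\e_0(I, N) = \e_0(I, M)$ across the entire superficial sequence. This is precisely where the strict inequality $r < \dim(M)$ enters: the classical preservation of multiplicity under a superficial element demands the ambient module have dimension at least two, and after removing $i-1$ elements this forces $i \leq \dim(M)-1$, which is exactly the condition $r \leq \dim(M)-1$. Once this equality is in hand, the rest of the argument is immediate from Definition \ref{hd1} and from the hypothesis that $\aa$ is superficial relative to $\hdeg_I(M)$.
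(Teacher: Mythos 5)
Your argument is correct and follows essentially the same route as the paper's: set $N=M/(\aa)M$, apply the recursive formula of Definition \ref{hd1} to extract $\l(H^0_\m(N)) \leq \hdeg_I(N)-\e_0(I,N)$, and then pass to $M$ via $\hdeg_I(N)\leq\hdeg_I(M)$ (superficiality relative to $\hdeg_I$) together with preservation of $\e_0$ along the superficial sequence, which is exactly where $r<\dim(M)$ is used. You merely spell out the justification of the second inequality, which the paper leaves implicit.
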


\if0
\begin{proof} Let $N=M/(\aa)M$. Then by  Definition \ref{hd1} we obtain
\[ \l \left( H^{0}_{\m} (N) \right) \leq \hdeg_{I}(N) - \e_{0}(I, N) \leq \hdeg_{I}(M) - \e_{0}(I, M). \qedhere \]
\end{proof}
\fi

Brennan and Ciuperc\u{a} showed when the homological degrees are preserved under modulo operation for modules of small dimension. 

\begin{Theorem}{\rm \cite[Theorems 3.1 and 4.2]{BC09}}
Let $R$ be a standard graded ring over a field  and $\m$ the ideal generated by the elements of degree $1$. 
Let $M$ be a finitely generated graded $R$-module.
\begin{enumerate}[{\rm (1)}]
\item Suppose ${\ds \dim(M)=1}$. Let $a$ be an element of $\m \setminus \m^{2}$ such that ${\ds \dim(M/aM)=0}$. Then ${\ds \hdeg(M) = \hdeg(M/aM)}$ if and only if ${\ds (0 :_{M} a) = H^{0}_{\m}(M) }$.
\item  Suppose ${\ds \dim(M)=2}$. Let $a$ be a homogeneous element of degree $1$ regular on $M/H^{0}_{\m}(M)$. Then ${\ds \hdeg(M) = \hdeg(M/aM)}$ if and only if   ${\ds (0 :_{M} a) = H^{0}_{\m}(M) }$ and ${\ds a \, H^{0}_{\m}( \Ext^{1}_{S}(M, S) ) =0}$. 
\end{enumerate}
\end{Theorem}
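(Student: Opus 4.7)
Both parts follow from a common analysis. Write $\bar{M}=M/H^{0}_{\m}(M)$; the hypothesis on $a$ in each part forces $a$ to be a non-zero-divisor on $\bar{M}$. Applying the snake lemma to the multiplication-by-$a$ diagram on $0\rar H^{0}_{\m}(M)\rar M\rar \bar{M}\rar 0$ yields $0:_{M}a = 0:_{H^{0}_{\m}(M)}a \subseteq H^{0}_{\m}(M)$ together with the short exact sequence
\[ 0\rar H^{0}_{\m}(M)/aH^{0}_{\m}(M)\rar M/aM \rar \bar{M}/a\bar{M}\rar 0. \]
Since $H^{0}_{\m}(M)/aH^{0}_{\m}(M)$ has finite length equal to $\l(0:_{M}a)$, applying $\Gamma_{\m}$ to this sequence yields
\[ \l(H^{0}_{\m}(M/aM)) = \l(0:_{M}a) + \l(H^{0}_{\m}(\bar{M}/a\bar{M})). \]

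For Part~(1), $\bar{M}$ is Cohen-Macaulay of dimension $1$ and $M/aM$ has finite length, so $\l(\bar{M}/a\bar{M})=\e_{0}(\m,M)$. Comparing $\hdeg(M)=\e_{0}(\m,M)+\l(H^{0}_{\m}(M))$ from Remark~\ref{hdsmall}(1) with $\hdeg(M/aM)=\l(M/aM)=\l(0:_{M}a)+\e_{0}(\m,M)$ gives $\hdeg(M)-\hdeg(M/aM)=\l(H^{0}_{\m}(M))-\l(0:_{M}a)$, which is non-negative and vanishes precisely when $0:_{M}a=H^{0}_{\m}(M)$.

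For Part~(2), $\dim(M/aM)=1$, so Remark~\ref{hdsmall}(1) gives $\hdeg(M/aM)=\e_{0}(\m,M)+\l(H^{0}_{\m}(M/aM))$. The long exact sequence of local cohomology for $0\rar \bar{M}\stackrel{a}{\rar}\bar{M}\rar \bar{M}/a\bar{M}\rar 0$, combined with $H^{0}_{\m}(\bar{M})=0$ and $H^{1}_{\m}(\bar{M})=H^{1}_{\m}(M)$, identifies $H^{0}_{\m}(\bar{M}/a\bar{M})=0:_{H^{1}_{\m}(M)}a$. Setting $N=\Ext^{1}_{S}(M,S)\cong\Hom(H^{1}_{\m}(M),E)$ via local duality, Matlis duality gives $\l(0:_{H^{1}_{\m}(M)}a)=\l(N/aN)$, hence $\hdeg(M/aM)=\e_{0}(\m,M)+\l(0:_{M}a)+\l(N/aN)$. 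If $\dim N=0$, Remark~\ref{hdsmall}(2) gives $\hdeg(M)=\e_{0}(\m,M)+\l(H^{0}_{\m}(M))+\l(N)$, so
\[ \hdeg(M)-\hdeg(M/aM) = [\l(H^{0}_{\m}(M))-\l(0:_{M}a)] + \l(aN), \]
a sum of non-negative terms whose joint vanishing is precisely $0:_{M}a=H^{0}_{\m}(M)$ together with $aN=aH^{0}_{\m}(N)=0$. If $\dim N=1$, apply the Part~(1) analysis to the one-dimensional module $N$: $\l(N/aN)\le \e_{0}(\m,N)+\l(H^{0}_{\m}(N))$ with equality iff $aH^{0}_{\m}(N)=0$; plugging into Remark~\ref{hdsmall}(2) completes the equivalence.

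The main obstacle is invoking Part~(1) on $N$ when $\dim N=1$: one needs $a$ to be a non-zero-divisor on $\bar{N}=N/H^{0}_{\m}(N)$, whereas the hypothesis only asks for regularity on $\bar{M}$. In the graded setting a generic degree-one element avoids the finitely many associated primes of $\bar{N}$, so either an implicit genericity argument resolves this, or one must absorb the extra length $\l(0:_{\bar{N}}a)$ into the bookkeeping and show it vanishes under the second stated condition.
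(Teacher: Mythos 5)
This statement is quoted in the survey from \cite{BC09} without proof, so there is no in-paper argument to compare against; judged on its own terms, your strategy is the natural one and most of it is sound. In Part (1) and in the $\dim N=0$ case of Part (2) the argument is correct: the snake-lemma identification $(0:_{M}a)=(0:_{H^{0}_{\m}(M)}a)$, the sequence $0\rar H^{0}_{\m}(M)/aH^{0}_{\m}(M)\rar M/aM\rar \bar{M}/a\bar{M}\rar 0$, the identification $H^{0}_{\m}(\bar{M}/a\bar{M})\cong (0:_{H^{1}_{\m}(M)}a)$, Matlis dual to $N/aN$, and the resulting expression of $\hdeg(M)-\hdeg(M/aM)$ as a sum of two nonnegative terms whose joint vanishing is exactly the stated pair of conditions, all check out. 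Do make explicit, though, that the equalities $\l(\bar{M}/a\bar{M})=\e_{0}(\m,\bar{M})$ and $\e_{0}(\m,\bar{M}/a\bar{M})=\e_{0}(\m,\bar{M})$ rest on $a$ being homogeneous of degree one (Hilbert series argument); for a non-homogeneous element of $\m\setminus\m^{2}$ they can fail, e.g.\ $a=x+y^{2}$ on $k[x,y]/(xy)$, so the hypothesis must be read as ``linear form.''

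The genuine gap is the one you flag yourself: in the case $\dim N=1$ you run the Part (1) analysis on $N$ without knowing that $a$ is regular on $\bar{N}=N/H^{0}_{\m}(N)$, and neither proposed remedy works as stated --- genericity is unavailable because the theorem is an equivalence for the \emph{given} element $a$, and the ``absorb the extra length'' option is never carried out. But the gap closes using facts already in your argument: you showed $N/aN$ is Matlis dual to $(0:_{H^{1}_{\m}(M)}a)\cong H^{0}_{\m}(\bar{M}/a\bar{M})$, hence has finite length; therefore $\dim(N/aN)=0$, so $a$ avoids every dimension-one prime of $\supp(N)$, and since $\Gamma_{\m}(\bar{N})=0$ and $\dim \bar{N}\leq 1$ force every prime in $\Ass(\bar{N})$ to be such a dimension-one prime, $a$ is automatically $\bar{N}$-regular --- exactly the hypothesis of your Part (1) analysis applied to $N$ (whose Part (1) hypothesis is precisely $\dim(N/aN)=0$). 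Alternatively, by local duality the dimension-one primes in $\supp\left(\Ext^{1}_{S}(M,S)\right)$ are the dimension-one primes $\p$ with $\depth(M_{\p})=0$, i.e.\ the dimension-one associated primes of $M$, which coincide with those of $\bar{M}$ and are avoided by $a$ by hypothesis. With this observation inserted (and the degree-one point above made explicit), your proof is complete.
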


Homological torsion is a combinatorial expression of the terms of homological degrees. It behaves well under hyperplane sections \cite[Lemma 2.8]{GO14} and plays a very useful role in the study of Hilbert coefficients.

\begin{Definition}{\rm \cite[Definition 1.2]{GO14} 
Let $(R, \m)$ be a Gorenstein local ring of dimension $d>0$ with infinite residue field $k$.  Let $E=E(k)$ be the injective envelope of $k$. Let $I$ be an $\m$-primary ideal of $R$ and $M$ a finitely generated $R$-module of dimension $s \geq 2$. 
The $i$-th {\em homological torsion} of $M$ with respect to $I$ is
\[ T_{I}^{i}(M) = \sum_{j=1}^{s-i} {{s-i-1}\choose{j-1}} \hdeg_{I} \left( \Hom_{R}( H^{j}_{\m}(M), E )   \right).  \]
In particular, we write $T_{I}(M)= T^{1}_{I}(M)$ and call it the homological torsion. That is,
\[ T_{I}(M) = \sum_{j=1}^{s-1} {{s-2}\choose{j-1}} \hdeg_{I} \left( \Hom_{R}( H^{j}_{\m}(M), E )   \right).  \]
}\end{Definition}

\begin{Proposition}{\rm \cite[Proposition 6.1]{Chern1}}
Let $(R, \m)$ be a Gorenstein local ring.  Let $M$ be a generalized Cohen-Macaulay  $R$-module of dimension $s \geq 2$. Let ${\ds Q=(x_{1}, \ldots, x_{s})}$ be a parameter ideal for $M$.  Then
\[ \l \left( H^{0}_{\m}( M/(x_{1}, \ldots, x_{s-1})M   )   \right) \leq T_{I}(M) \] 
for any $\m$-primary ideal $I$. Moreover, if $M$ is Buchsbaum, then the equality holds.
\end{Proposition}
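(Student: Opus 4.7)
The strategy is induction on $s=\dim M\ge 2$, cutting down one parameter at a time via a carefully chosen hyperplane section and tracking the lengths of local cohomology modules through long exact sequences. Since $M$ is generalized Cohen-Macaulay, each $H^j_\m(M)$ with $j<s$ has finite length; its Matlis dual $\Hom_R(H^j_\m(M),E)$ is then of the same finite length, and the homological degree of a finite length module is just its length. In particular $T_I(M)$ is independent of $I$, and
\[T_I(M)\;=\;\sum_{j=1}^{s-1}\binom{s-2}{j-1}\l\bigl(H^j_\m(M)\bigr).\]

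\textbf{Hyperplane-section bound per cohomology degree.} Choose $x_1$ superficial relative to $\hdeg_I(M)$ (available by Theorem~\ref{GO14-2-6}); this guarantees that $M':=M/x_1M$ inherits the generalized Cohen-Macaulay property in dimension $s-1$ and, when $M$ is Buchsbaum, remains Buchsbaum. Factoring the multiplication-by-$x_1$ map through
\[0\to 0:_M x_1\to M\xrightarrow{\,x_1\,} x_1M\to 0\qquad\text{and}\qquad 0\to x_1M\to M\to M'\to 0,\]
together with $\l(0:_M x_1)<\infty$ (a parameter on a generalized Cohen-Macaulay module has finite-length annihilator), a diagram chase in the associated long exact sequences of local cohomology yields
\[\l\bigl(H^j_\m(M')\bigr)\;\leq\;\l\bigl(H^j_\m(M)\bigr)+\l\bigl(H^{j+1}_\m(M)\bigr),\qquad 0\le j\le s-2.\]
When $M$ is Buchsbaum, $\m\,H^{j+1}_\m(M)=0$ forces $0:_{H^{j+1}_\m(M)}x_1=H^{j+1}_\m(M)$ and the sequences split, so each inequality becomes an equality.

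\textbf{Induction.} The base case $s=2$ is precisely the $j=0$ instance above: $\l(H^0_\m(M'))\le\l(H^1_\m(M))=T_I(M)$. For the inductive step, apply the induction hypothesis to $M'$ with parameter sequence $x_2,\ldots,x_{s-1}$:
\[\l\bigl(H^0_\m\bigl(M'/(x_2,\ldots,x_{s-1})M'\bigr)\bigr)\;\le\;T_I(M')\;=\;\sum_{j=1}^{s-2}\binom{s-3}{j-1}\l\bigl(H^j_\m(M')\bigr).\]
Substitute the per-degree bound and regroup using Pascal's identity $\binom{s-3}{j-1}+\binom{s-3}{j-2}=\binom{s-2}{j-1}$:
\[\sum_{j=1}^{s-2}\binom{s-3}{j-1}\bigl(\l(H^j_\m(M))+\l(H^{j+1}_\m(M))\bigr)\;=\;\sum_{j=1}^{s-1}\binom{s-2}{j-1}\l\bigl(H^j_\m(M)\bigr)\;=\;T_I(M).\]
Since $M'/(x_2,\ldots,x_{s-1})M'=M/(x_1,\ldots,x_{s-1})M$, this closes the induction, and equality at every step in the Buchsbaum case yields the final equality claim.

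\textbf{Main obstacle.} The delicate point is the simultaneous choice of $x_1$: it must be a parameter on $M$, superficial relative to $\hdeg_I(M)$ so that $M/x_1M$ inherits generalized Cohen-Macaulayness and the per-degree cohomology-length inequalities hold, and, in the Buchsbaum case, chosen so that $M/x_1M$ is again Buchsbaum (so that the equalities propagate through the induction). The existence of such an $x_1$ is guaranteed by Theorem~\ref{GO14-2-6} combined with standard Buchsbaum-theory facts on hyperplane sections. Once this $x_1$ is in hand, the remainder of the argument is a long-exact-sequence chase and binomial-coefficient bookkeeping via Pascal's identity.
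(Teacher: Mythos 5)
Your inequality argument is essentially the paper's proof: induction on $s$, the base case $s=2$ from the local cohomology long exact sequence, and the inductive step via the per-degree bound $\l(H^{j}_{\m}(M/x_{1}M)) \leq \l(H^{j}_{\m}(M)) + \l(H^{j+1}_{\m}(M))$ followed by Pascal's identity, i.e. $T_{I}(M/x_{1}M) \leq T_{I}(M)$; likewise both you and the paper use that for a generalized Cohen-Macaulay module $\hdeg_{I}(\Hom_{R}(H^{j}_{\m}(M),E)) = \l(H^{j}_{\m}(M))$, so $T_{I}$ does not depend on $I$. Your treatment of a possibly non-regular $x_{1}$ through the two sequences $0 \to 0:_{M}x_{1} \to M \to x_{1}M \to 0$ and $0 \to x_{1}M \to M \to M/x_{1}M \to 0$ is in fact more careful than the paper's write-up, which simply assumes $x_{1}$ is $M$-regular.

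There are, however, two genuine problems. First, you cannot ``choose $x_{1}$ superficial relative to $\hdeg_{I}(M)$'': the parameter ideal $Q=(x_{1},\ldots,x_{s})$ is given, and the conclusion is about $M/(x_{1},\ldots,x_{s-1})M$ for those specific elements; Theorem \ref{GO14-2-6} produces some good element, not the given one. For the inequality this is harmless, because your argument never actually uses superficiality (the per-degree bound holds for any parameter element, exactly as in the paper), so the cure is simply to delete the choice; but in the equality part you rely on the choice in an essential way (``chosen so that $M/x_{1}M$ is again Buchsbaum''), and that option is not available, nor is Buchsbaumness preserved by arbitrary parameter elements. Second, the equality argument does not close even granting that the multiplication maps on cohomology vanish: with $\m H^{j}_{\m}(M)=0$ for $j<s$, your per-degree relations become equalities of the form $\l(H^{0}_{\m}(M/x_{1}M)) = \l(H^{0}_{\m}(M)) + \l(H^{1}_{\m}(M))$ in the base case, which equals $T_{I}(M)=\l(H^{1}_{\m}(M))$ only if $H^{0}_{\m}(M)=0$; since $T_{I}$ has no $H^{0}$ term, the extra summand $\l(H^{0}_{\m}(M))$ must be dealt with (for instance, $M=R\oplus k$ over a two-dimensional regular local ring $R$ is Buchsbaum with $T_{I}(M)=0$ while $\l(H^{0}_{\m}(M/x_{1}M))\geq 1$, so some positive-depth reduction is unavoidable at this point). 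To be fair, the paper's displayed proof establishes only the inequality and leaves the Buchsbaum equality to \cite{Chern1}, but as written your equality argument has a real gap there.
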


\begin{proof}  The following is a modification of the proof given in \cite{Chern1} with added details. We prove by induction on $s$. Suppose $s=2$. We may assume that $x=x_1$ is $M$-regular. From the exact sequence ${\ds 0 \rar M \stackrel{x}{\lar} M \rar M/xM \rar 0 }$, we obtain 
\[   H^{0}_{\m}(M) = 0 \lar H^{0}_{\m}(M/xM) \lar H^{1}_{\m}(M) \lar \cdots. \]
Thus, ${\ds \l \left( H^{0}_{\m}(M/xM) \right) \leq \l \left(   H^{1}_{\m}(M) \right) = T_{I}(M)}$. 

\medskip

\noindent Suppose $s \geq 3$ and the assertion is true for $s-1$. Let ${\ds N=M/x_{1}M}$, ${\ds \xx= x_{1}, \ldots, x_{s-1}}$, and ${\ds \xx'=x_{2}, \ldots, x_{s-1}}$. Then  by induction hypothesis, we obtain
\[ \l \left( H^{0}_{\m} (M/ \xx M) \right) = \l \left( H^{0}_{\m} (N/ \xx' N) \right) \leq T_{I}(N). \]
It is enough to show that ${\ds T_{I}(N) \leq T_{I}(M) }$. From the exact sequence ${\ds 0 \rar M \stackrel{x_{1}}{\lar} M \rar N \rar 0 }$, we obtain the long exact sequence
\[ \cdots \lar H^{j}_{\m}(M) \lar H^{j}_{\m}(M) \lar H^{j}_{\m}(N) \lar H^{j+1}_{\m}(M) \lar \cdots.\]
Therefore, for each $j=1, \ldots, s-2$, we have
\[ \l \left( H^{j}_{\m} (N) \right)  \leq  \l \left( H^{j}_{\m}(M) \right) + \l \left( H^{j+1}_{\m}(M) \right). \]
Now we compute ${\ds T_{I}(N)}$.
\[ \begin{array}{rcl}
{\ds T_{I}(N) } &=& {\ds \sum_{j=1}^{s-2} {{s-3}\choose{j-1}} \l \left( H^{j}_{\m}(N) \right) } \vspace{0.1 in} \\ & \leq & {\ds  \sum_{j=1}^{s-2} {{s-3}\choose{j-1}} \l \left( H^{j}_{\m}(M) \right)   +  \sum_{j=1}^{s-2} {{s-3}\choose{j-1}} \l \left( H^{j+1}_{\m}(M) \right)   } \vspace{0.1 in} \\
&=& {\ds \l \left( H^{1}_{\m} (M) \right) + \sum_{j=2}^{s-2} \left({{s-3}\choose{j-1}} + {{s-3}\choose{j-2}}   \right)  \l \left( H^{j}_{\m} (M) \right) + \l \left( H^{s-1}_{\m} (M) \right) }  \vspace{0.1 in} \\
&=& {\ds T_{I}(M).}
\end{array}\]
\end{proof}

\subsection*{Euler Characteristic} Let $Q=(x_{1}, \ldots, x_{s})$ be a parameter ideal for a module $M$. The $i$th Kozul homology module generated by the system of parameters $x_{1}, \ldots, x_{s}$ with coefficients in $M$ is denoted by ${\ds H_{i}(Q, M)}$. The (first) {\em Euler characteristic} of $M$ relative to $Q$ is defined as 
\[ \chi_{1}(Q, M) = \sum_{i \geq 1} (-1)^{i-1} \l \left( H_{i}(Q, M)  \right).  \]
Then by a classical result of Serre \cite{AB58, S65}, we have
\[ \chi_{1}(Q, M) =  \l(M/ QM) -  \e_{0}(Q, M). \]
The Euler characteristic can be also bounded by the homological degree.

\begin{Theorem}\label{Chern5-7-2}{\rm \cite[Theorem 7.2]{Chern5}}\label{HDbound}
Let $(R, \m)$ be a Noetherian complete local ring with infinite residue field and let $M$ be a finitely generated $R$-module with ${\ds \dim(M)=\dim(R)=d \geq 1}$. Then for every parameter ideal $Q$ for $R$, we have
\[ \chi_{1}(Q, M) \leq \hdeg_{Q}(M) - \e_{0}(Q, M).  \]
\end{Theorem}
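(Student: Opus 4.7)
The plan is to reduce the stated inequality to an equivalent form and then induct on $d$. By Serre's identity $\chi_1(Q, M) = \l(M/QM) - \e_0(Q, M)$, which holds because $Q$ is generated by a system of parameters for $M$, the theorem is equivalent to
\[
\l(M/QM) \,\leq\, \hdeg_Q(M).
\]
I prove this by induction on $d = \dim R = \dim M$.

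For the base case $d = 1$, Remark~\ref{hdsmall}(1) gives $\hdeg_Q(M) = \e_0(Q, M) + \l(H^0_{\m}(M))$. Setting $\bar M := M/H^0_{\m}(M)$, the short exact sequence $0 \to H^0_{\m}(M) \to M \to \bar M \to 0$ tensored with $R/Q$ yields
\[
\l(M/QM) \;\leq\; \l(H^0_{\m}(M)) + \l(\bar M/Q\bar M).
\]
Since $\bar M$ has positive depth and dimension one it is Cohen--Macaulay, and since the finite-length module $H^0_{\m}(M)$ contributes nothing to multiplicity, $\l(\bar M/Q\bar M) = \e_0(Q, \bar M) = \e_0(Q, M)$, and the base case follows at once.

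For the inductive step $d \geq 2$, the infinite residue field and Theorem~\ref{GO14-2-6} allow the choice of a minimal generator $x$ of $Q$ that is superficial relative to $\hdeg_Q(M)$. Then $\hdeg_Q(M/xM) \leq \hdeg_Q(M)$ by definition; the quotient ring $\bar R := R/(x)$ is still complete with infinite residue field; $\bar M := M/xM$ has $\dim \bar M = d-1 = \dim \bar R$; and the image $\bar Q$ of $Q$ in $\bar R$ is a parameter ideal. Because $\bar M$ is annihilated by $x$, the local cohomology modules $H^j_{\m}(\bar M)$ and the multiplicity $\e_0(\bar Q, \bar M) = \e_0(Q, \bar M)$ depend only on $\bar M$ and $\m$, so $\hdeg_{\bar Q}(\bar M) = \hdeg_Q(\bar M)$. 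Applying the inductive hypothesis to $\bar M$ over $\bar R$ with parameter ideal $\bar Q$, together with the identity $M/QM = \bar M/\bar Q\bar M$, yields
\[
\l(M/QM) \;=\; \l(\bar M/\bar Q\bar M) \;\leq\; \hdeg_{\bar Q}(\bar M) \;=\; \hdeg_Q(\bar M) \;\leq\; \hdeg_Q(M),
\]
completing the induction.

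The main obstacle is the inductive step, where one must produce a minimal generator $x$ of $Q$ that is simultaneously superficial in the strong sense of Theorem~\ref{GO14-2-6} (so that the homological degree does not increase modulo $x$), and must verify that the recursive definition of $\hdeg$ is genuinely compatible with the change of ring $R \to R/(x)$. The first point is handled by the infinite residue field hypothesis, since the superficial set $H$ is a generic subset of $Q \setminus \m Q$ and Nakayama's lemma allows a change of minimal generators without altering $Q$. The second point is cleanest in the local-cohomology formulation (Definition~\ref{hd1}), and the completeness of $R$ supplies a Gorenstein presentation via Cohen's structure theorem so that local duality is available throughout and the inductive hypothesis is legitimately applied over $\bar R$.
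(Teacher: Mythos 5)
Your argument is correct, but note that the survey itself gives no proof of this statement: it is quoted from \cite[Theorem 7.2]{Chern5}, so the right comparison is with the argument given there and with the closely analogous proof the paper does include, namely Proposition~\ref{Chern1-7-7}. That argument cuts down by a superficial sequence $a_1,\ldots,a_{d-1}$ relative to $\hdeg_Q(M)$ all at once, works with the one-dimensional quotient, and invokes Corollary~\ref{Chern1-7-2} to bound $\l(H^0_{\m}(M/\aa M))$ by $\hdeg_Q(M)-\e_0(Q,M)$; your proof instead rewrites the claim as $\l(M/QM)\leq \hdeg_Q(M)$ via Serre's formula and runs a genuine induction on $d$ with a change of ring $R\to R/(x)$, using Theorem~\ref{GO14-2-6} one element at a time. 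Both routes rest on the same key lemma (hdeg does not increase modulo a superficial element), so this is a reorganization rather than a new idea, but your version is arguably cleaner since $\e_0$ and the inequality of Corollary~\ref{Chern1-7-2} never need to be tracked separately. Three points you treat briskly should be made explicit: (i) the element $x$ must be chosen simultaneously in the set $H$ of Theorem~\ref{GO14-2-6} and outside $\m Q$ as well as outside the finitely many top-dimensional minimal primes of $R$; since each excluded locus lies in a proper subspace of $Q/\m Q$ and the residue field is infinite, such an $x$ exists, and then $\bar Q$ is $\bar\m$-primary with $d-1$ generators, which is what forces $\dim \bar R=d-1$ and makes $\bar Q$ a parameter ideal (superficiality alone gives only $\dim \bar M=d-1$, because a superficial element avoids the positive-dimensional associated primes of $M$); (ii) the identification $\hdeg_Q(\bar M)=\hdeg_{\bar Q}(\bar M)$ needs, besides base-change invariance of local cohomology and of Hilbert functions, the compatibility of Matlis duals, i.e.\ $\Hom_R(N,E_R(k))\simeq \Hom_{\bar R}(N,E_{\bar R}(k))$ for $\bar R$-modules $N$, so that the recursion in Definition~\ref{hd1} is literally the same on both sides; (iii) since $R$ is only complete, Theorem~\ref{GO14-2-6} and Definition~\ref{hd1} are applied through a Gorenstein presentation furnished by Cohen's theorem, exactly as the paper's convention after Definition~\ref{hd1} prescribes, and the same Gorenstein cover can be used for $R$ and $\bar R$. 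With these details filled in, your proof is complete.
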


Goto and Ozeki proved when the equality holds true. Recall that we denote the Hilbert polynomial of $M$ with respect to $Q$ by ${\ds \HP_{Q, M}(X)}$.

\begin{Theorem}\label{GO14-3-3}{\rm \cite[Theorem 3.3]{GO14}}
Let  $(R, \m)$ be a Noetherian complete local ring with infinite residue field and let $M$ be a finitely generated $R$-module with ${\ds \dim(M)=\dim(R)=d \geq 1}$. Let $Q$ be a parameter ideal for $R$. Then
\[ \chi_{1}(Q, M) = \hdeg_{Q}(M) - \e_{0}(Q, M)  \]
if and only if
\[ (-1)^{i} \e_{i}(Q, M) = \left\{ \begin{array}{ll} {\ds T^{\,i}_{Q}(M)} \quad & \mbox{if} \;\; 1 \leq i \leq d-1, \vspace{0.1 in} \\ {\ds \l \left( H^{0}_{\m}(M)  \right) } \quad & \mbox{if} \;\; i=d \end{array} \right.    \]
 for all $1 \leq i \leq d$, and ${\ds \l \left(M/Q^{n+1}M \right) = \HP_{Q, M}(n) }$ for all $n \geq 0$. 
\end{Theorem}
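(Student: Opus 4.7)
The plan begins by rewriting both sides of the claimed identity in a common combinatorial form. Expanding the definition of $T^i_Q(M)$, interchanging the order of summation, and applying the hockey-stick identity $\sum_{k=j-1}^{d-2}\binom{k}{j-1} = \binom{d-1}{j}$ deliver
\[
\hdeg_Q(M) - \e_0(Q, M) \;=\; \l\bigl(H^0_\m(M)\bigr) + \sum_{i=1}^{d-1} T^i_Q(M),
\]
since the $i = 0$ term of Definition \ref{hd1} is $\hdeg_Q(\Hom_R(H^0_\m(M),E)) = \l(H^0_\m(M))$ by Matlis duality on a finite-length module. Combined with Serre's identity $\chi_1(Q, M) = \l(M/QM) - \e_0(Q, M)$, the target equality reduces to
\[
\l(M/QM) \;=\; \e_0(Q, M) + \l\bigl(H^0_\m(M)\bigr) + \sum_{i=1}^{d-1} T^i_Q(M).
\]

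\textbf{Sufficiency.} Assume (a), (b), (c). Setting $n = 0$ in (c) and using $\HP_{Q, M}(0) = \sum_{i=0}^{d}(-1)^i \e_i(Q, M)$ gives $\l(M/QM) = \sum_{i=0}^{d}(-1)^i \e_i(Q, M)$; substituting (a) and (b) reproduces the right-hand side of the reformulated equality above.

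\textbf{Necessity.} This direction proceeds by induction on $d$. Choose a superficial element $a \in Q$ relative to $\hdeg_Q(M)$ as provided by Theorem \ref{GO14-2-6}, so that $\hdeg_Q(M/aM) \leq \hdeg_Q(M)$ and each torsion $T^i_Q$ also descends by \cite[Lemma 2.8]{GO14}. The long exact sequence
\[
\cdots \lar H^j_\m(M) \lar H^j_\m(M) \lar H^j_\m(M/aM) \lar H^{j+1}_\m(M) \lar \cdots
\]
controls the relation between the local cohomology modules of $M$ and of $M/aM$, and hence the Hilbert coefficients, as $a$ runs through a superficial sequence. Two key intermediate results must be established along the induction: (I) the coefficient bounds $(-1)^i \e_i(Q, M) \leq T^i_Q(M)$ for $1 \leq i \leq d - 1$ and $(-1)^d \e_d(Q, M) \leq \l(H^0_\m(M))$; and (II) the hypothesis $\chi_1(Q, M) = \hdeg_Q(M) - \e_0(Q, M)$ descends to $M/aM$. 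The inductive hypothesis then gives the analogues of (a), (b), (c) on $M/aM$, which lift back to (a) and (b) on $M$ via the long exact sequence. Statement (c) on $M$ at every $n \geq 0$ follows from (c) on $M/aM$ combined with the identity $\HP_{Q, M/aM}(n) = \HP_{Q, M}(n) - \HP_{Q, M}(n-1)$, once the base case $\l(M/QM) = \HP_{Q, M}(0)$ is obtained from the saturation of the bounds in (I).

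\textbf{Main obstacle.} The most delicate part is orchestrating the descent and ascent so that all three conclusions (a), (b), (c) transfer simultaneously and at the level of equalities, not merely inequalities. The superficial element from Theorem \ref{GO14-2-6} must preserve the equality in Theorem \ref{HDbound} under reduction to $M/aM$, must commute with the long exact sequence in local cohomology compatibly with the coefficient bounds in (I), and must propagate enough information about the Hilbert--Samuel function of $M$ to recover (c) at every $n$. Handling all these compatibilities at the equality case is the core technical challenge.
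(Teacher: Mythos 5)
Your reformulation of $\hdeg_Q(M)-\e_{0}(Q,M)$ as $\l\left(H^{0}_{\m}(M)\right)+\sum_{i=1}^{d-1}T^{i}_{Q}(M)$ is correct (the interchange of summation and the hockey-stick identity check out, and Matlis duality gives the length of the $i=0$ term), and with it your sufficiency argument — set $n=0$ in the Hilbert-function hypothesis and substitute the coefficient identities — is complete. Note that the paper itself only quotes this theorem from \cite{GO14} and supplies no proof, so the assessment rests on your argument alone.

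The necessity direction, however, is a plan rather than a proof, and it is exactly where the content of the theorem lies. You state that the coefficient bounds (I) and the descent of the equality (II) ``must be established,'' and you call the lifting of (a) and (b) through the long exact sequence ``the core technical challenge'' without carrying it out; none of these steps is actually performed. Moreover, the one ingredient you do commit to is false as stated: for a superficial element $a$ the identity $\HP_{Q,M/aM}(n)=\HP_{Q,M}(n)-\HP_{Q,M}(n-1)$ fails whenever $H^{0}_{\m}(M)\neq 0$, because $a$ is then a zerodivisor on $M$ and $(Q^{n+1}M:_{M}a)=Q^{n}M+(0:_{M}a)$ for large $n$, so the correct relation carries a correction term $\l(0:_{M}a)$ in the constant coefficient (for instance $R=M=k[[x,y]]/(xy,y^{2})$ with $Q=(x)$: here $\HP_{Q,R}(n)=n+2$, while $\l(R/(x))=2$, and the discrepancy is exactly $\l(0:_{R}x)=1$). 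Since the constant term $\l\left(H^{0}_{\m}(M)\right)$ is precisely what condition (b) concerns, this correction cannot be ignored; tracking it, together with the equality-case analysis of $\hdeg_{Q}(M/aM)\leq \hdeg_{Q}(M)$ and of the torsions along the long exact sequence in local cohomology, is the substance of the Goto--Ozeki argument, and it is missing from your proposal.
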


\begin{Example}\label{GO14-3-8}{\rm \cite[Example 3.8]{GO14}
Let $\ell \geq 2$ and $m \geq 1$ be integers. Let 
\[ S=k \bl X_{i}, Y_{i}, Z_{j} \mid 1 \leq i \leq \ell, \; 1 \leq j \leq m \br \quad \mbox{and} \quad  R = S/(X_{1}, \ldots, X_{\ell}) \cap (Y_{1}, \ldots, X_{\ell}), \] where $S$ is the formal power series ring over an infinite field $k$.  Let $x_{i}, y_{i}$, and $z_{j}$ denote the images of $X_{i}, Y_{i}$, and $Z_{j}$ in $R$ respectively.  Consider the following parameter ideal of $R$:
\[ Q = (x_{i} - y_{i} \mid 1 \leq i \leq \ell)  + (z_{j} \mid 1 \leq j \leq m).  \]
Then we have
\[ \chi_{1}(Q, R) = \ell -1, \quad \e_{0}(Q, R) = 2, \quad \mbox{and} \quad \hdeg_{Q}(R) = 2 + {{\ell + m -1}\choose{m+1}}. \]
In particular, if $\ell=2$, then ${\ds \chi_{1}(Q, R) = \hdeg_{Q}(R) - \e_{0}(Q, R)}$. 
}\end{Example}

\subsection*{Chern number} The homological torsion can be used as a bound for the first Hilbert coefficient $\e_{1}(Q, M)$ of a module $M$ relative to an ideal generated by a system of parameters $Q$ for $M$. This coefficient $\e_{1}(Q, M)$ is often called the Chern number, and it has been extensively studied by Vasconcelos and the authors \cite{Chern1, Chern2, Chern3, Chern6, Chern4, Chern5}.

\begin{Theorem}\label{Chern5-6-5}{\rm \cite[Theorem 6.5]{Chern5} }\label{HTbound}
Let  $(R, \m)$ be a Noetherian complete local ring with infinite residue field and let $M$ be a finitely generated $R$-module with ${\ds \dim(R)=\dim(M) \geq 2}$. Let $Q$ be a parameter ideal for $R$. Then
${\ds - \e_{1}(Q, M) \leq T_{Q}(M)}$. 
\end{Theorem}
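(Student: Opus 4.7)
\emph{Plan.} The plan is to induct on $d = \dim(M) \geq 2$, reducing dimension by a carefully chosen superficial element. Since $L := H^0_\m(M)$ has finite length, the exact sequence $0 \to L \to M \to M/L \to 0$ leaves $\e_i(Q,\,\cdot\,)$ unchanged for $i \leq d-1$ (only the constant $\e_d$ absorbs $\l(L)$) and preserves $H^j_\m(\,\cdot\,)$ for $j \geq 1$, so $T_Q(M/L) = T_Q(M)$; I may therefore replace $M$ by $M/L$ and assume $\depth M \geq 1$. By Theorem \ref{GO14-2-6} choose $a \in Q$ which is superficial for $M$ with $\hdeg_Q(M/aM) \leq \hdeg_Q(M)$; positive depth forces $a$ to be $M$-regular. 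Set $N := M/aM$ and $\bar Q := Q/(a)$; the standard superficial-element identities give $\e_0(Q,M) = \e_0(\bar Q, N)$ and $\e_1(Q,M) = \e_1(\bar Q, N)$.

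For the base case $d = 2$, the module $N$ is one-dimensional and $a$ is $M$-regular, so a direct computation on $0 \to H^0_\m(N) \to N \to N/H^0_\m(N) \to 0$ (whose quotient is Cohen-Macaulay with $\bar Q$ a regular parameter after a superficial choice) yields $-\e_1(\bar Q, N) = \l(H^0_\m(N))$. Remark \ref{hdsmall}(1) gives $\hdeg_Q(N) = \e_0(\bar Q, N) + \l(H^0_\m(N))$, while Remark \ref{hdsmall}(2) combined with $\depth M \geq 1$ gives $\hdeg_Q(M) = \e_0(Q,M) + \hdeg_Q(\Hom_R(H^1_\m(M), E)) = \e_0(Q,M) + T_Q(M)$. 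Substituting into $\hdeg_Q(N) \leq \hdeg_Q(M)$ and cancelling multiplicities produces $-\e_1(Q, M) = \l(H^0_\m(N)) \leq T_Q(M)$. For the inductive step $d \geq 3$, apply the induction hypothesis to $N$ in dimension $d-1 \geq 2$ to obtain $-\e_1(\bar Q, N) \leq T_{\bar Q}(N)$; since $\e_1(Q,M) = \e_1(\bar Q, N)$, it suffices to show $T_{\bar Q}(N) \leq T_Q(M)$, which is the hyperplane-section inequality for the homological torsion \cite[Lemma 2.8]{GO14}.

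The main obstacle is precisely the comparison $T_{\bar Q}(N) \leq T_Q(M)$, because the same element $a$ must both witness $\hdeg_Q(M/aM) \leq \hdeg_Q(M)$ and control every summand $\hdeg_{\bar Q}(\Hom_R(H^j_\m(N), E))$ of $T_{\bar Q}(N)$. The long exact sequence of local cohomology arising from $0 \to M \xrightarrow{a} M \to N \to 0$ realises each $H^j_\m(N)$ as an extension of $H^j_\m(M)/a H^j_\m(M)$ by $(0 :_{H^{j+1}_\m(M)} a)$; dualising and applying the additivity/subadditivity of $\hdeg$ from Proposition \ref{V98-3-18} together with the Pascal identity $\binom{s-3}{j-1} + \binom{s-3}{j-2} = \binom{s-2}{j-1}$ — exactly the telescoping used in the Proposition preceding this theorem — delivers the inequality, provided $a$ is simultaneously superficial for each of the finitely many auxiliary modules involved. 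This is arranged by intersecting the corresponding finite family of avoidance loci in $\Proj(\Rees(Q))$, in the spirit of Theorem \ref{GO14-2-6}.
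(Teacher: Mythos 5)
The paper itself gives no proof of this statement: it is quoted verbatim from \cite[Theorem 6.5]{Chern5}, so there is no internal argument to compare against. Judged on its own, your proof follows the same route as the cited literature (\cite{Chern5}, \cite{GO14}): kill $H^0_{\m}(M)$, pass to a generic superficial $a\in Q$, use the base case $d=2$ via the one-dimensional identity $-\e_1=\l(H^0_{\m})$ together with $\hdeg_Q(M/aM)\leq\hdeg_Q(M)$, and induct using the hyperplane-section inequality $T_{\bar Q}(M/aM)\leq T_Q(M)$. All the individual steps check out: the reduction modulo $H^0_{\m}(M)$ does preserve $\e_1$ (for $d\geq 2$) and every $H^j_{\m}$ with $j\geq 1$, hence $T_Q$; the $d=2$ computation is exactly the argument the survey itself reproduces in the proofs of Corollary \ref{Chern1-7-2} and Proposition \ref{Chern1-7-7}; and your sketch of the torsion inequality via the long exact sequence, Matlis duality, Proposition \ref{V98-3-18} and the Pascal identity mirrors the telescoping used in the generalized Cohen--Macaulay proposition preceding the theorem. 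Two points deserve explicit mention rather than being swept into ``genericity'': (i) you need $a\notin\m Q$ and $a$ outside the top-dimensional minimal primes of $R$, so that $\dim R/(a)=d-1$ and $\bar Q$ is honestly a parameter ideal of the complete local ring $R/(a)$ --- otherwise the induction hypothesis (and, in the base case, $\e_1(\bar Q,\bar N)=0$ for the Cohen--Macaulay quotient) does not apply; this is easily arranged by enlarging the finite avoidance family of Theorem \ref{GO14-2-6}, but it should be said. (ii) The inductive step stands or falls with $T_{\bar Q}(M/aM)\leq T_Q(M)$; invoking \cite[Lemma 2.8]{GO14} for this is legitimate in the present context (the survey itself cites that lemma for exactly this behavior), and your outline of its proof is the right one, but as written it is a sketch rather than a complete verification. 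With those caveats, the proposal is correct and is essentially the argument of the source \cite{Chern5}, not a new route.
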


As a direct consequence of Theorem \ref{GO14-3-3}, we obtain that if ${\ds \chi_{1}(Q, M) = \hdeg_{Q}(M) - \e_{0}(Q, M)}$, then 
${\ds - \e_{1}(Q, M) = T_{Q}(M) }$. The converse is true if $M$ is unmixed. We note that \cite[Example 4.5]{GO14} shows that the unmixedness is necessary.

\begin{Theorem}{\rm \cite[Theorem 4.2]{GO14}}
Suppose that ${\ds \dim(R)= \dim(M) \geq 2}$ and that $M$ is unmixed. Let $Q$ be a parameter ideal for $R$. Then
\[ - \e_{1}(Q, M) = T_{Q}(M) \;\; \mbox{if and only if} \;\;  \chi_{1}(Q, M) = \hdeg_{Q}(M) - \e_{0}(Q, M). \] 
\end{Theorem}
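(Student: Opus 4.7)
The plan is to reduce everything to Theorem \ref{GO14-3-3}, which already characterizes $\chi_{1}(Q,M) = \hdeg_{Q}(M) - \e_{0}(Q,M)$ via the package of identities $(-1)^{i}\e_{i}(Q,M) = T^{\,i}_{Q}(M)$ for $1 \leq i \leq d-1$, $(-1)^{d}\e_{d}(Q,M) = \l(H^{0}_{\m}(M))$, and $\l(M/Q^{n+1}M) = \HP_{Q,M}(n)$ for every $n \geq 0$. With that tool in hand, the forward direction ($\Rightarrow$) is immediate and free of charge: reading off the $i = 1$ case of the first family of identities gives $-\e_{1}(Q,M) = T_{Q}(M)$, and unmixedness is not used.

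The converse ($\Leftarrow$) is the substantive half. Unmixedness of $M$ with $\dim M \geq 2$ forces $\m \notin \Ass(M)$, so $H^{0}_{\m}(M) = 0$; the identity at $i = d$ in Theorem \ref{GO14-3-3} therefore reduces to the claim $(-1)^{d}\e_{d}(Q,M) = 0$, which still has to be verified. The task is thus to promote the single assumption $-\e_{1}(Q,M) = T_{Q}(M)$ into the complete list of identities of Theorem \ref{GO14-3-3}. I would proceed by induction on $d = \dim M$. The base case $d = 2$ should be tractable, because with $H^{0}_{\m}(M) = 0$ both the recursive definition of $\hdeg_{Q}(M) - \e_{0}(Q,M)$ and the defining sum for $T_{Q}(M)$ collapse to the single common expression $\hdeg_{Q}(\Hom_{R}(H^{1}_{\m}(M), E))$; so the hypothesis yields $\hdeg_{Q}(M) - \e_{0}(Q,M) = -\e_{1}(Q,M)$, and combining Theorem \ref{Chern5-7-2}, Serre's identity $\chi_{1}(Q,M) = \l(M/QM) - \e_{0}(Q,M)$, and an unmixedness-driven lower bound on $\chi_{1}(Q,M)$ in terms of $-\e_{1}(Q,M)$ should produce $\chi_{1}(Q,M) = \hdeg_{Q}(M) - \e_{0}(Q,M)$, after which Theorem \ref{GO14-3-3} supplies the remaining identities automatically. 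For the inductive step, Theorem \ref{GO14-2-6} provides $a \in Q$ superficial for $M$ with $\hdeg_{Q}(M/aM) \leq \hdeg_{Q}(M)$; unmixedness makes $a$ regular on $M$. The classical invariance $\e_{i}(Q/(a), M/aM) = \e_{i}(Q,M)$ for $i < d$, together with the hyperplane-section behavior of the homological torsion (Lemma 2.8 of \cite{GO14}), transfers the relevant data from $M$ to the lower-dimensional module $M/aM$.

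The main obstacle is that $M/aM$ is typically not unmixed even when $M$ is, so the induction hypothesis cannot be applied to $M/aM$ directly. Circumventing this will require either strengthening the inductive statement — for instance, carrying along the vanishing of the low-degree local cohomologies along a maximal superficial sequence and verifying them one step at a time — or, more cleanly, recasting the proof of the bound in Theorem \ref{Chern5-7-2} as a telescoping sum of inequalities, one of which is precisely the bound $-\e_{1}(Q,M) \leq T_{Q}(M)$ from Theorem \ref{Chern5-6-5}, and arguing that under unmixedness the saturation of this single inequality forces each of the remaining intermediate inequalities to become an equality as well. This equality-propagation route appears to be the cleanest way to convert the scalar hypothesis $-\e_{1}(Q,M) = T_{Q}(M)$ into the full list of identities demanded by Theorem \ref{GO14-3-3}, and the example ruling out the unmixed hypothesis (\cite[Example 4.5]{GO14}) will serve as the sanity check that no step of the argument can be run without it.
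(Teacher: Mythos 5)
First, a point of comparison: the paper itself gives no proof of this statement --- it is quoted from \cite[Theorem 4.2]{GO14}, and the only argument appearing in the text is the remark that the implication from $\chi_{1}(Q,M)=\hdeg_{Q}(M)-\e_{0}(Q,M)$ to $-\e_{1}(Q,M)=T_{Q}(M)$ is a direct consequence of Theorem \ref{GO14-3-3}. Your easy half reproduces exactly that remark (reading off the $i=1$ identity, no unmixedness needed; note only that you label this direction ``$\Rightarrow$'' although in the statement as written it is the ``if'' direction). So your proposal must stand on its own for the converse, and as written it does not: it is a plan whose decisive steps are left open.

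Two concrete gaps. First, in the base case $d=2$ the collapse of both $\hdeg_{Q}(M)-\e_{0}(Q,M)$ and $T_{Q}(M)$ to $\hdeg_{Q}(\Hom_{R}(H^{1}_{\m}(M),E))$ is fine, but the whole content there is the lower bound $\chi_{1}(Q,M)\geq -\e_{1}(Q,M)$ for unmixed $M$, which you neither prove nor cite precisely --- ``should produce'' carries the entire weight, and this inequality is itself a theorem-level fact, not a formal consequence of Theorem \ref{Chern5-7-2} and Serre's formula. Second, the inductive step is not closed: you yourself observe that $M/aM$ need not be unmixed, and neither of the two escape routes you sketch (a strengthened induction hypothesis along a superficial sequence, or an equality-propagation/telescoping refinement of the proof of Theorem \ref{Chern5-7-2}) is carried out. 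Even granting that the induction hypothesis could be applied to $M/aM$, it would yield $\chi_{1}(Q,M)=\chi_{1}(\overline{Q},M/aM)=\hdeg_{\overline{Q}}(M/aM)-\e_{0}(\overline{Q},M/aM)$, while Theorem \ref{GO14-2-6} gives only $\hdeg_{\overline{Q}}(M/aM)\leq \hdeg_{Q}(M)$; you would still need equality of these two homological degrees to obtain the assertion for $M$, and nothing in the sketch addresses that. Finally, note that for $d\geq 3$ and $H^{0}_{\m}(M)=0$ one has $\hdeg_{Q}(M)-\e_{0}(Q,M)=\sum_{i=1}^{d-1}\binom{d-1}{i}\hdeg_{Q}(\Hom_{R}(H^{i}_{\m}(M),E))\geq T_{Q}(M)$, with strict inequality as soon as some $H^{i}_{\m}(M)$ with $1\leq i\leq d-2$ is nonzero; so the hypothesis $-\e_{1}(Q,M)=T_{Q}(M)$ is genuinely weaker than $-\e_{1}(Q,M)=\hdeg_{Q}(M)-\e_{0}(Q,M)$, and the heuristic that saturating one inequality forces all the others is precisely the part that requires a real argument --- it is where the substance of \cite[Theorem 4.2]{GO14} lies, and it is missing here.
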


Recall that if $M$ is Cohen-Macaulay, then $\hdeg_{I}(M) = \e_{0}(I, M)$. Thus, sometimes the integer ${\ds \hdeg_{I}(M) - \e_{0}(I, M)}$ is called a {\em Cohen-Macaulay deficiency}. Vasconcelos conjectured that the negativity of $\e_{1}(Q, R)$, where $Q$ is a parameter ideal of $R$, is an expression of the lack of Cohen-Macaulayness of $R$ \cite[Conjecture 3.1]{Chern1}.  Ghezzi, Goto, Hong, Ozeki, Phuong and Vasconcelos settled this conjecture affirmatively for rings in \cite{Chern3} and for modules in \cite{Chern5}. Thus, $\e_{1}(Q, M)$ can be considered also as a Cohen-Macaulay deficiency. The following proposition shows the relation between these two Cohen-Macaulay deficiencies. 
 
\begin{Proposition}\label{Chern1-7-7}{\rm \cite[Corollary 7.7]{Chern1}}
Let $(R, \m)$ be a Noetherian complete local ring with infinite residue field and let $M$ be a finitely generated $R$-module with ${\ds \dim(M)=\dim(R)=d \geq 1}$.  Let $Q=(a_{1}, \ldots, a_{d})$ be a parameter ideal for $R$ such that the sequence ${\ds a_{1}, \ldots, a_{d}}$ is superficial relative to $\hdeg_{Q}(M)$. Then 
\[ -\e_{1}(Q, M) \leq \hdeg_{Q}(M) - \e_{0}(Q, M). \]
\end{Proposition}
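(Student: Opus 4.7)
I would prove this by induction on $d = \dim R = \dim M$, using the standard superficial-element machinery together with Remark \ref{hdsmall}(1). In the base case $d=1$, Remark \ref{hdsmall}(1) gives $\hdeg_Q(M) - \e_0(Q,M) = \l(H^0_\m(M))$. To compute $\e_1(Q,M)$, consider the short exact sequence
\[ 0 \lar H^0_\m(M) \lar M \lar \tilde M \lar 0, \]
where $\tilde M := M/H^0_\m(M)$ is Cohen--Macaulay of dimension one. An Artin--Rees argument gives $Q^{n+1}M \cap H^0_\m(M) = 0$ for $n \gg 0$, so additivity of Hilbert polynomials yields $\e_0(Q,M) = \e_0(Q,\tilde M)$ and $\e_1(Q,M) = \e_1(Q,\tilde M) - \l(H^0_\m(M)) = -\l(H^0_\m(M))$, using that $\e_1$ vanishes for a one-dimensional Cohen--Macaulay module. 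The desired bound is thus an equality when $d=1$.

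For the inductive step $d \geq 2$, I would first reduce to the case $\depth M \geq 1$. By Proposition \ref{V98-3-18}(1) applied to the same short exact sequence, $\hdeg_Q(M) = \l(H^0_\m(M)) + \hdeg_Q(\tilde M)$, while the condition $\dim H^0_\m(M) = 0 < d$ makes the Hilbert polynomial additive, so $\e_0(Q,M) = \e_0(Q,\tilde M)$ and $\e_1(Q,M) = \e_1(Q,\tilde M)$. Hence the bound for $\tilde M$ implies the one for $M$, with slack $\l(H^0_\m(M))$. Assume now $\depth M \geq 1$, so $a_1$ is $M$-regular, and pass to $\bar M := M/a_1 M$ of dimension $d-1$ and $\bar Q := Q/(a_1)$. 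Sally's superficial-reduction identity
\[ \HP_{\bar Q, \bar M}(n) = \HP_{Q, M}(n) - \HP_{Q, M}(n-1) \quad \text{for } n \gg 0, \]
valid because $a_1$ is superficial and $M$-regular (giving the Artin--Rees equality $a_1 M \cap Q^{n+1}M = a_1 Q^n M$ for $n \gg 0$), yields $\e_i(\bar Q, \bar M) = \e_i(Q, M)$ for $i = 0, 1, \ldots, d-1$, preserving both $\e_0$ and $\e_1$. Since $\hdeg$ is determined intrinsically by local cohomology (Definition \ref{hd1}), $\hdeg_{\bar Q}(\bar M) = \hdeg_Q(\bar M)$.

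Iterating Theorem \ref{GO14-2-6}, $a_2, \ldots, a_d$ is a superficial sequence on $\bar M$ relative to $\hdeg_Q(\bar M)$, so the inductive hypothesis gives
\[ -\e_1(Q,M) = -\e_1(\bar Q, \bar M) \leq \hdeg_{\bar Q}(\bar M) - \e_0(\bar Q, \bar M) = \hdeg_Q(\bar M) - \e_0(Q, M), \]
and the defining property $\hdeg_Q(M/a_1 M) \leq \hdeg_Q(M)$ of the superficial sequence then completes the bound. The main obstacle is the bookkeeping across the two reductions: verifying that $\e_1$ is preserved both when passing to $\tilde M$ and when passing to $\bar M$, and that the ``superficial relative to $\hdeg$'' property is inherited at each step of the induction. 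Both are technical but standard consequences of Hilbert-polynomial additivity together with Theorem \ref{GO14-2-6}.
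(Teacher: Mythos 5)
Your base case is fine (and, like the paper's, it in fact gives the equality $-\e_{1}(Q,M)=\l(H^{0}_{\m}(M))$ in dimension one), but the inductive step has a genuine gap: the hypothesis of the proposition concerns the given module $M$ and the given sequence, and you never verify that it survives your module replacements. When you pass from $M$ to $\tilde M=M/H^{0}_{\m}(M)$ in order to make $a_{1}$ regular, you need $a_{1},\ldots,a_{d}$ to be superficial relative to $\hdeg_{Q}(\tilde M)$, in particular $\hdeg_{Q}(\tilde M/a_{1}\tilde M)\leq \hdeg_{Q}(\tilde M)$. This does not follow from the corresponding inequality for $M$: with $W=H^{0}_{\m}(M)$, Proposition \ref{V98-3-18}(1) gives $\hdeg_{Q}(M)=\hdeg_{Q}(\tilde M)+\l(W)$ and $\hdeg_{Q}(M/a_{1}M)=\hdeg_{Q}(\tilde M/a_{1}\tilde M)+\l(W/a_{1}W)$, so $\hdeg_{Q}(M/a_{1}M)\leq\hdeg_{Q}(M)$ only yields $\hdeg_{Q}(\tilde M/a_{1}\tilde M)\leq\hdeg_{Q}(\tilde M)+\l(a_{1}W)$, which is too weak whenever $a_{1}W\neq 0$. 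The appeal to ``iterating Theorem \ref{GO14-2-6}'' cannot close this: that theorem is an existence statement about generic elements, and it does not say that your particular $a_{2},\ldots,a_{d}$ are superficial relative to $\hdeg$ of the new module $\tilde M/a_{1}\tilde M$. So the inductive hypothesis cannot be invoked as written.

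The paper sidesteps this inheritance problem by not inducting on the statement at all: it reduces modulo $a_{1},\ldots,a_{d-1}$ in one step, uses the fact that for superficial (not necessarily regular) elements one always has $-\e_{1}(Q,M)\leq-\e_{1}(\bar Q,\bar M)$ (for $\dim\geq 3$ the coefficient is preserved, and in the drop from dimension $2$ to $1$ the correction term $\l(0:_{M}a)$ enters with the favorable sign), computes $-\e_{1}(q,M')=\l(H^{0}_{\m}(M'))$ exactly in dimension one, and then applies Corollary \ref{Chern1-7-2} once, to the original $M$ and the length-$(d-1)$ truncation of the given sequence, to bound $\l(H^{0}_{\m}(M'))$ by $\hdeg_{Q}(M)-\e_{0}(Q,M)$; no transfer of the $\hdeg$-superficiality hypothesis to auxiliary modules is ever needed. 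If you drop the depth reduction and run your induction directly on $M/a_{1}M$, replacing the exact identity for $\e_{1}$ by the one-sided inequality above, your argument becomes essentially an inductive paraphrase of the paper's proof; as written, however, the step through $\tilde M$ is a real gap, and the citation of Theorem \ref{GO14-2-6} does not supply the missing hypothesis.
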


\begin{proof} We include a modification of the proof of \cite[Lemma 2.4]{GN03}.  We prove by induction on $d$. If $d=1$, let $Q=(a)$. Let $t$ be a sufficiently large integer such that
\[ H^{0}_{\m}(M) = (0:_{M} Q^{t}), \quad \mbox{and} \quad \l(M/Q^{t}M) = \e_{0}(Q, M) t - \e_{1}(Q, M). \]
Then we obtain the following:
\[ \begin{array}{rcl}
{\ds - \e_{1}(Q, M)} &=& {\ds \l(M/Q^{t}M) - \e_{0}(Q, M) t } \vspace{0.1 in} \\
&=& {\ds \l(M/Q^{t}M) - \e_{0}(Q^{t}, M)} \vspace{0.1 in} \\
&=& {\ds \l(M/Q^{t}M) - \left( \l (M/Q^{t}M) - \chi_{1}(Q^{t}, M) \right) } \vspace{0.1 in} \\
&=& {\ds  \l( H_{1}(Q^{t}, M) ) } \vspace{0.1 in} \\
&=& {\ds \l( (0:_{M} Q^{t})) } \vspace{0.1 in} \\
&=& {\ds \l \left(H^{0}_{\m}(M) \right).}
\end{array} \]
Since $\dim(M)=1$, we have ${\ds \hdeg_{Q}(M) = \e_{0}(Q, M) + \l \left( H^{0}_{\m}(M)  \right)}$. Thus, the assertion follows.

\medskip

\noindent Suppose $d \geq 2$. Let $\aa=a_{1}, \ldots, a_{d-1}$, $S=R/(\aa)$, $q=QS$, and $M'=M/(\aa)M$. Then as proved in the $d=1$ case, we have ${\ds -\e_{1}(q, M') = \l \left( H^{0}_{\m}(M') \right)}$. Therefore,
\[ -\e_{1}(Q, M) \leq -\e_{1}( q, M') = \l \left( H^{0}_{\m}(M') \right) \leq \hdeg_{Q}(M) - \e_{0}(Q, M),\]
where the last inequality follows from Corollary \ref{Chern1-7-2}. 
\end{proof}

By Theorem \ref{Chern5-7-2} and Proposition \ref{Chern1-7-7}, we obtain 
\[ \max \{ \chi_{1}(Q, M), \; - \e_{1}(Q, M) \} \leq \hdeg_{Q}(M) - \e_{0}(Q, M). \]
In \cite{GHV12}, Goto, Hong, and Vasconcelos established the relationships among $\e_{1}(Q)$ and various first Euler characteristics when $Q$ is a parameter ideal of $R$. We state one of the main results of \cite{GHV12}, which is related to the inequality stated above.

\begin{Theorem}{\rm \cite[Theorem 4.2]{GHV12} }
Let $(R, \m)$ be a Noetherian local ring of dimension $d \geq 2$ with ${\ds \depth(R) \geq d-1}$ and infinite residue field. Let $Q=(\xx)$ be a parameter ideal of $R$. Then ${\ds \chi_{1}(Q, M)= - \e_{1}(Q, M)}$ if and only if ${\ds \xx}$ is a $d$-sequence.
\end{Theorem}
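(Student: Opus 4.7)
The plan is to reduce the statement to a one-dimensional quotient. Exploiting $\depth R \geq d-1$ and the infinite residue field, I would first choose superficial generators so that $x_1,\ldots,x_{d-1}$ form a regular sequence (which is possible since $d-1$ does not exceed the depth). Set $S = R/(x_1,\ldots,x_{d-1})$ and let $\bar{x}_d$ denote the image of $x_d$ in $S$, so that $\dim S = 1$. By the standard invariance of the first two Hilbert coefficients under quotienting by a regular superficial sequence, $\e_0(Q,R) = \e_0(\bar{x}_d S, S)$ and $\e_1(Q,R) = \e_1(\bar{x}_d S, S)$.

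Next I would compute both sides explicitly in $S$. The one-dimensional computation already performed in the proof of Proposition~\ref{Chern1-7-7} yields
$$ -\e_1(\bar{x}_d S, S) = \l\bigl(H^0_\m(S)\bigr). $$
For the Euler characteristic, the regularity of $x_1,\ldots,x_{d-1}$ collapses the Koszul homology to that of $\bar{x}_d$ on $S$, so $H_i(\xx;R) \cong H_i(\bar{x}_d; S)$; this vanishes for $i \geq 2$ and equals $(0 :_S \bar{x}_d)$ for $i=1$. Hence $\chi_1(Q,R) = \l((0 :_S \bar{x}_d))$, and the desired identity becomes the colon equality $(0 :_S \bar{x}_d) = H^0_\m(S)$, which is equivalent to $(0 :_S \bar{x}_d^{2}) = (0 :_S \bar{x}_d)$ because the ascending chain $\{(0 :_S \bar{x}_d^{n})\}_n$ stabilizes at $H^0_\m(S)$ and halting at $n=1$ forces the limit immediately.

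I would then translate this colon condition into the $d$-sequence property. Assuming regularity of $x_1,\ldots,x_{d-1}$, the defining equalities $(x_1,\ldots,x_{i-1}) :_R x_i x_k = (x_1,\ldots,x_{i-1}) :_R x_k$ for $1 \leq i \leq d-1$ and $k \geq i$ are automatic: for $k \leq d-1$ both sides collapse to $(x_1,\ldots,x_{i-1})$ by regularity of $x_k$; for $k=d$, regularity of $x_i$ on $R/(x_1,\ldots,x_{i-1})$ forces both sides to equal $(x_1,\ldots,x_{i-1}) :_R x_d$. The only substantive remaining condition is $(x_1,\ldots,x_{d-1}) :_R x_d^{2} = (x_1,\ldots,x_{d-1}) :_R x_d$, which pulls back precisely to the colon equality obtained in $S$.

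The hard part will be the ordering subtlety: being a $d$-sequence depends on the order of $\xx$, whereas $\chi_1(Q,R) + \e_1(Q,R)$ is an invariant of $Q$ alone. I would have to verify that the regular superficial sequence used in the reduction does not compromise the equivalence, either by showing that the depth hypothesis forces $x_1,\ldots,x_{d-1}$ to be regular in every ordering under the minimality assumption on $\xx$, or by a symmetric argument establishing that all orderings succeed or fail together; I expect the proof in \cite{GHV12} to handle this via a genericity argument made available by the infinite residue field.
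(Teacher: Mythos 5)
The survey does not reproduce a proof of this theorem (it is quoted from \cite{GHV12}), so I can only judge your argument on its own terms, and as written it has a genuine gap at exactly the point you defer. The reduction machinery itself is sound: for a generating sequence of $Q$ whose first $d-1$ entries form a regular sequence, the Koszul homology collapses, $\chi_{1}(Q,R)=\lambda\left((0:_{S}\bar x_{d})\right)$ with $S=R/(x_{1},\ldots,x_{d-1})$, the one-dimensional computation gives $-\mathrm{e}_{1}(\bar x_{d}S,S)=\lambda\left(H^{0}_{\mathfrak{m}}(S)\right)$, and for such a sequence the $d$-sequence property reduces to $(0:_{S}\bar x_{d})=(0:_{S}\bar x_{d}^{2})$. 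But this establishes the equivalence only for a specially chosen superficial regular generating sequence of $Q$, whereas the theorem concerns the given $\xx$: since $\chi_{1}(Q,R)$ and $\mathrm{e}_{1}(Q,R)$ depend only on $Q$ while being a $d$-sequence is a property of the particular sequence in its particular order, the entire content of the statement is precisely the transfer you leave to ``a genericity argument,'' and nothing in your plan supplies it.

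Moreover, the first repair you propose is false: $\depth(R)=d-1$ together with minimal generation does not force the first $d-1$ entries of a parameter sequence to be regular. For instance, $R=k\bl x,y,z \br/\left((x)\cap(y,z)\right)$ has $d=2$ and $\depth(R)=1$, and $Q=(y,\,x-z)$ is a parameter ideal whose first generator $y$ is a zerodivisor. What is true is the opposite implication, which is what the ``if'' direction of your reduction actually needs: if the given $\xx$ is a $d$-sequence, then $(0:_{R}x_{1})\cap Q=0$ forces $(0:_{R}x_{1})\subseteq H^{0}_{\mathfrak{m}}(R)=0$, and inductively $x_{1},\ldots,x_{d-1}$ is a regular sequence. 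Even granting this, your plan is incomplete for that direction, because these $x_{i}$ need not be superficial, so the invariance $\mathrm{e}_{1}(Q,R)=\mathrm{e}_{1}(\bar x_{d}S,S)$ you quote is not available as stated; one needs instead something like Huneke's property $Q^{n+1}\cap(x_{1},\ldots,x_{d-1})=(x_{1},\ldots,x_{d-1})Q^{n}$ for $d$-sequences to control the Hilbert function of $S$. For the ``only if'' direction the problem is untouched: you must show that the numerical equality forces the given, arbitrary sequence---whose early entries may a priori be zerodivisors---to be a $d$-sequence, and an argument that all orderings of $\xx$ succeed or fail together would not suffice, since your reduction replaces $\xx$ by different elements altogether.
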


The {\em sectional genus} ${\ds g_{s}(I, M)}$ of $M$ with respect to an $\m$-primary ideal $I$ is defined as
\[ g_{s}(I, M) = \l(M/IM) -  \e_{0}(I, M)  + \e_{1}(I, M).  \]
Suppose that ${\ds \dim(R)=\dim(M) }$ and $Q$ is a parameter ideal for $R$. Then due to Serre, we have ${\ds \chi_{1}(Q, M) =  \l(M/ QM) - \e_{0}(Q, M) }$. Thus, we can rewrite
\[ g_{s}(Q, M) =\chi_{1}(Q, M) + \e_{1}(Q, M).\]
As consequences of Theorems \ref{Chern5-7-2} and \ref{Chern5-6-5}, both ${\ds g_{s}(Q, M)}$ and ${\ds \hdeg_{Q}(M) -  \e_{0}(Q, M)  - \,T_{Q}(M)}$ are less than or equal to ${\ds \hdeg_{Q}(M) - \e_{0}(Q, M) + \e_{1}(Q, M)}$. Goto and Ozeki showed a direct relation between ${\ds g_{s}(Q, M)}$ and ${\ds \hdeg_{Q}(M) - \e_{0}(Q, M) - \,T_{Q}(M)}$.

\if0
\[ \begin{array}{rclcl}
{\ds g_{s}(Q, M) } &=& {\ds \textcolor{blue}{\chi_{1}(Q, M)}+ \e_{1}(Q, M)} &\leq& {\ds \textcolor{blue}{\hdeg_{Q}(M) - \e_{0}(Q, M) +  \e_{1}(Q, M)}, \; \mbox{and}  \vspace{0.1 in} \\  && {\ds  \hdeg_{Q}(M) - \e_{0}(Q, M) \;\textcolor{blue}{- \,T_{Q}(M)} } &\leq& {\ds \hdeg_{Q}(M) - \e_{0}(Q, M)  + \textcolor{blue}{\e_{1}(Q, M)} } \end{array}    \]
\fi

\begin{Theorem}{\rm \cite[Proposition 3.3 and Theorem 3.4]{GO16}}
Suppose ${\ds \dim(R)=\dim(M) \geq 2 }$ and let $Q$ be a parameter ideal for $R$. Then
\[  g_{s}(Q, M) \leq \hdeg_{Q}(M) - \e_{0}(Q, M)  - T_{Q}(M). \]
The equality holds true if and only if
\[ (-1)^{i} \e_{i}(Q, M) = \left\{ \begin{array}{ll}  
{\ds T^{i}_{Q}(M)} \quad & \mbox{if} \;\; 2 \leq i \leq d-1, \vspace{0.1 in} \\
{\ds \l \left( H^{0}_{\m}(M) \right) } & \mbox{if} \;\;  i=d,
\end{array}  \right. \]
for all $2 \leq i \leq d$ and 
\[ \l \left( M/QM  \right) = \sum_{i=0}^{d}(-1)^{i}\e_{i}(Q, M).   \]
\end{Theorem}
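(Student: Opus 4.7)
The plan is to establish both the inequality and the equality characterization simultaneously by induction on $d = \dim R = \dim M$, with the reduction engine being a suitably chosen superficial element. Given $M$, the idea is to pick $x \in Q$ that is superficial for $M$ with respect to $Q$ and is simultaneously good for the hyperplane section behavior of both $\hdeg_Q(M)$ (Theorem \ref{GO14-2-6}) and $T_Q(M)$ (Lemma 2.8 of \cite{GO14}, which the authors use in the display preceding Proposition \ref{Chern5-7-2}). Such an $x$ exists because each failure locus is contained in a finite union of proper subsets of $Q \setminus \m Q$. Set $N = M/xM$ and $q = Q/(x)$; then $\dim N = d - 1$, $\l(N/qN) = \l(M/QM)$, $\e_0(q, N) = \e_0(Q, M)$, and the low Hilbert coefficients transfer, so in particular $g_s(q, N) = g_s(Q, M)$.

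The base case is $d = 2$. Here $T_Q(M) = \hdeg_Q\bigl(\Hom(H^1_\m(M), E)\bigr)$, and combining this with the explicit formula of Remark \ref{hdsmall}(2) gives $\hdeg_Q(M) - \e_0(Q, M) - T_Q(M) = \l(H^0_\m(M))$. The inequality is therefore $g_s(Q, M) \leq \l(H^0_\m(M))$, which one proves by reducing modulo a superficial $x$, applying Serre's formula $\chi_1(q, N) = \l(N/qN) - \e_0(q, N)$ in dimension one, and using that for a one-dimensional module $N$ a parameter ideal $q$ satisfies $\e_1(q, N) = -\l(H^0_\m(N))$ on the maximal Cohen-Macaulay quotient, with the passage $M \rightsquigarrow N$ absorbing $\l(H^0_\m(M))$. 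For $d \geq 3$, the inductive hypothesis applied to $N$ gives
\[
g_s(q, N) \leq \hdeg_q(N) - \e_0(q, N) - T_q(N),
\]
and the choices of $x$ deliver $\hdeg_q(N) \leq \hdeg_Q(M)$ and $T_q(N) \leq T_Q(M)$. Chaining these with $g_s(q, N) = g_s(Q, M)$ yields the required bound.

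For equality, I would track each estimate used. Equality forces $\hdeg_q(N) = \hdeg_Q(M)$, $T_q(N) = T_Q(M)$, and equality in the inductive bound for $N$. Via the long exact sequence in local cohomology attached to $0 \to M \xrightarrow{x} M \to N \to 0$ (the same apparatus used in the proof displayed just before Proposition \ref{Chern5-7-2}, together with the Pascal identity $\binom{s-3}{j-1} + \binom{s-3}{j-2} = \binom{s-2}{j-1}$), these translate into rigidity statements on the $H^j_\m(M)$ which, unwound, give the equalities $(-1)^i \e_i(Q, M) = T^i_Q(M)$ for $2 \leq i \leq d - 1$ and $(-1)^d \e_d(Q, M) = \l(H^0_\m(M))$. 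The Hilbert-function condition $\l(M/QM) = \sum_{i=0}^d (-1)^i \e_i(Q, M)$ emerges from the dimension-one closing step, where equality $g_s(q, N) = \l(H^0_\m(N))$ is exactly this statement at $n = 0$. The converse direction is a direct combinatorial computation: substituting the assumed equalities into $g_s(Q, M) = \l(M/QM) - \e_0 + \e_1$ and using the identity $\hdeg_Q(M) = \e_0(Q, M) + \l(H^0_\m(M)) + \sum_{i=1}^{d-1} T^i_Q(M)$ (obtainable from the definition of $\hdeg_Q$ by Vandermonde/hockey-stick manipulation of binomials) collapses both sides to $\l(H^0_\m(M)) + \sum_{i=2}^{d-1} T^i_Q(M)$.

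The principal obstacle is the inductive inequality $T_q(N) \leq T_Q(M)$ together with its sharp equality criterion: the ``good'' choice of superficial element must control every term $\hdeg_Q\bigl(\Hom(H^j_\m(M), E)\bigr)$ appearing in $T_Q(M)$, not merely the top one, and one has to verify that the combinatorial rewrite of $T_q(N)$ in terms of the $H^j_\m(M)$'s is termwise dominated with the coefficients matching those in $T_Q(M)$. Once this is in place, the rest of the argument is straightforward bookkeeping, but this step is where essentially all the hidden content lives.
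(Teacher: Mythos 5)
Note first that the survey does not prove this theorem: it is quoted from Goto--Ozeki \cite{GO16}, so your plan can only be judged on its own merits. Several ingredients are sound: the hockey-stick identity $\hdeg_{Q}(M)=\e_{0}(Q,M)+\l\left(H^{0}_{\m}(M)\right)+\sum_{i=1}^{d-1}T^{i}_{Q}(M)$ is correct, so the right-hand side of the theorem equals $\l\left(H^{0}_{\m}(M)\right)+\sum_{i=2}^{d-1}T^{i}_{Q}(M)$; your ``if'' direction of the equality statement is then a correct two-line computation; and the $d=2$ base case, which reduces to $g_{s}(Q,M)\leq \l\left(H^{0}_{\m}(M)\right)$, can indeed be handled with a superficial element together with $\e_{1}((a),N)=-\l\left(H^{0}_{\m}(N)\right)$ in dimension one.

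The genuine gap is in the inductive step. From the inductive hypothesis $g_{s}(q,N)\leq \hdeg_{q}(N)-\e_{0}(q,N)-T_{q}(N)$, together with $g_{s}(q,N)=g_{s}(Q,M)$ and $\e_{0}(q,N)=\e_{0}(Q,M)$, what you must show is
\[ \hdeg_{q}(N)-T_{q}(N)\ \leq\ \hdeg_{Q}(M)-T_{Q}(M). \]
The two inequalities you invoke, $\hdeg_{q}(N)\leq\hdeg_{Q}(M)$ and $T_{q}(N)\leq T_{Q}(M)$, do not chain to this, because $T$ enters with a negative sign: you would need $T_{q}(N)\geq T_{Q}(M)$, which is the opposite of what the hyperplane-section lemma of \cite{GO14} gives and is false in general. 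Equivalently, by the identity above, the statement that actually has to be proved is the monotonicity under superficial sections of the combination $\l\left(H^{0}_{\m}(M)\right)+\sum_{i=2}^{d-1}T^{i}_{Q}(M)$, namely
\[ \l\left(H^{0}_{\m}(N)\right)+\sum_{i=2}^{d-2}T^{i}_{q}(N)\ \leq\ \l\left(H^{0}_{\m}(M)\right)+\sum_{i=2}^{d-1}T^{i}_{Q}(M), \]
where the growth of $H^{0}_{\m}(N)$ coming from $(0:_{H^{1}_{\m}(M)}x)$ must be absorbed by the extra torsion terms via the long exact sequence in local cohomology and the Matlis duals $\Hom_{R}(H^{j}_{\m}(M),E)$. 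That is precisely where the content of \cite{GO16} lies, and your plan replaces it with an invalid chaining; the survey itself warns that combining the known bounds $\chi_{1}(Q,M)\leq\hdeg_{Q}(M)-\e_{0}(Q,M)$ and $-\e_{1}(Q,M)\leq T_{Q}(M)$ only bounds both sides by $\hdeg_{Q}(M)-\e_{0}(Q,M)+\e_{1}(Q,M)$, so no argument that merely stacks same-direction monotonicity statements can reach the theorem. The forward (``only if'') equality analysis is likewise only gestured at and would need the same refined termwise control.
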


Let $(R, \m)$ be a Cohen-Macaulay local ring, $I$ an $\m$-primary ideal, and $Q$ a minimal reduction of $I$. 
Rossi proved that, if ${\ds \dim(R) \leq 2}$, then ${\ds g_{s}(I, R) \geq r_{Q}(I) -1}$, where $r_{Q}(I)$ is the reduction number of $I$ with respect to $Q$ \cite[Corollary 1.5]{R00}. Thus, the homological degree can be used as an upper bound for a reduction number. Ghezzi, Goto, Hong, and Vasconcelos were able to extend Rossi's result to $2$-dimensional Buchsbaum rings \cite[Theorem 4.3]{red}. We conclude this section with several open questions raised by Vasconcelos.

\begin{Question}{\rm
Let $R$ be a Cohen-Macaulay local ring which is a homomorphic image of a Gorenstein local ring. Let $\mathcal{M}(R)$ be the category of all finitely generated $R$-modules. Consider the following set of rational numbers:
\[ \left\{ \frac{\hdeg(M) - \hdeg(M/hM)}{ \deg(M) }   \;\;\vline\;\; M \in \mathcal{M}(R), \; \mbox{and  $h$ is a generic hyperplane section}  \right\}.  \]
Is this set finite or bounded? Can it be expressed as an invariant of $R$?
}\end{Question}

\begin{Question}{\rm
Let $(R, \m)$ be a homomorphic image of a Gorenstein local ring and let $Q=(\xx)$ be a parameter ideal for $R$. If $\xx$ is a $d$-sequence, what is an estimation for $\hdeg_{Q}(R)$? 
}\end{Question}


\begin{Question}{\rm
Let ${\ds \varphi: (R, \m) \rar (S, \n)}$ be a flat homomorphisms of local rings which are homomorphic images of a Gorenstein local rings. Can we relate ${\ds \hdeg(R), \hdeg(S)}$, and ${\ds \hdeg(S/\m S)}$?
}\end{Question}

\medskip

\section{Cohomological Degree}\label{CDsection}


Cohomological degrees were introduced by Vasconcelos and generalize the homological degree.

\begin{Definition}\label{Deg}{\rm (\cite[Definition 3.1]{V98}, \cite{V98-2})
Let $R$ be either a Noetherian local ring or a standard graded algebra with the (irrelevant) maximal ideal $\m$. Let $\mathcal{M}(R)$ be the category of finitely generated $R$-modules. The {\em cohomological degree} (or {\em extended degree}) on $\mathcal{M}(R)$ is a function ${\ds \Deg: \mathcal{M}(R) \rar \RR }$ satisfying the following three conditions:
\begin{enumerate}[(i)]
\item ${\ds \Deg(M) = \Deg \left(M/H^{0}_{\m}(M) \right) + \l \left(  H^{0}_{\m}(M) \right)}$.
\item Suppose that $R$ has positive depth and that $h \in R$ is a regular, generic hyperplane section on $M$. Then ${\ds \Deg(M/hM) \leq \Deg(M)}$.
\item If $M$ is a Cohen-Macaulay module, then ${\ds \Deg(M) =  \deg(M)}$.
\end{enumerate}
}\end{Definition}

If $(R, \m)$ is a homomorphic image of a Gorenstein local ring and $I$ an $\m$-primary ideal, then the homological degree $\hdeg_{I}$ with respect to $I$ is a cohomological degree of $R$ (Proposition \ref{V98-3-18}, Corollary \ref{Chern1-7-2}).


\begin{Remark}{\rm
There is just one cohomological degree in dimension at most 1.
\begin{enumerate}[(1)]
\item If ${\ds \dim(M)=0}$, then ${\ds \Deg(M) = \l(M)}$.
\item If ${\ds \dim(M)=1}$, then ${\ds \Deg(M) =  \deg(M) + \l \left( H^{0}_{\m}(M)  \right) = \adeg(M) = \hdeg(M)}$. 
\end{enumerate}
}\end{Remark}

An important property of the cohomological degree is that it provides an estimate for the Castelnuovo-Mumford regularity $\reg(A)$ of a standard graded algebra $A$.

\begin{Theorem}{\rm \cite[Theorem 2.4]{V98-2}}\label{CDmumford}
Let $A$ be a standard graded algebra over an infinite field $k$. Then
$\reg(A) < \Deg(A)$.
\end{Theorem}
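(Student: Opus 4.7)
The plan is to induct on $d = \dim A$, harnessing the three axioms of a cohomological degree so that each axiom drives one piece of the argument: the base case comes from the Artinian structure, axiom (ii) dispatches the positive-depth inductive step via a generic linear section, and axiom (i) reduces the depth-zero case to the positive-depth one by modding out $H^{0}_{\m}(A)$.

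For the base case $d=0$, $A$ is Artinian, so axiom (i) of Definition \ref{Deg} yields $\Deg(A) = \l(A)$. Since $A$ is standard graded over $k$ with $A_{0} = k$ and is generated by $A_{1}$, one exploits the no-gap property of such algebras: if $A_{n} \neq 0$ then $A_{i} \neq 0$ for every $0 \leq i \leq n$ (otherwise $A_{n} \subseteq A_{1} \cdot A_{n-1} = 0$). This forces $\l(A) \geq \reg(A) + 1$, giving the strict inequality.

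For the inductive step with $\depth A \geq 1$, I would pick a generic linear form $h \in A_{1}$; since $k$ is infinite, $h$ may be taken to be a non-zero divisor on $A$. The long exact sequence of local cohomology attached to $0 \rightarrow A(-1) \stackrel{h}{\rightarrow} A \rightarrow A/hA \rightarrow 0$ gives the standard identity $\reg(A/hA) = \reg(A)$, while axiom (ii) gives $\Deg(A/hA) \leq \Deg(A)$. Since $A/hA$ is a standard graded $k$-algebra of dimension $d-1$, the induction hypothesis closes the case: $\reg(A) = \reg(A/hA) < \Deg(A/hA) \leq \Deg(A)$.

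For the remaining case $\depth A = 0$, set $H = H^{0}_{\m}(A)$ and $\bar A = A/H$, so that $\depth \bar A \geq 1$ and $\dim \bar A = d$. Axiom (i) delivers $\Deg(A) = \Deg(\bar A) + \l(H)$, and the previous case applied to $\bar A$ gives $\reg(\bar A) < \Deg(\bar A) \leq \Deg(A)$. The sequence $0 \rightarrow H \rightarrow A \rightarrow \bar A \rightarrow 0$, combined with $\reg(H) = \mathrm{end}(H)$ (valid since $H$ has finite length), produces $\reg(A) \leq \max\{\mathrm{end}(H), \reg(\bar A)\}$. The hard part will be showing $\mathrm{end}(H) < \l(H) + \Deg(\bar A)$; I would attack it by combining the no-gap property of $A$ (which forces $\bar A_{i} \neq 0$ in those degrees $i \leq \mathrm{end}(H)$ where $H_{i}$ vanishes) with the bound $\Deg(\bar A) \geq \deg(\bar A) \geq 1$, or, failing that, by a secondary induction on $\l(H)$ that peels off a one-dimensional graded socle summand of $H$ at the top degree and invokes the additivity in axiom (i) to reduce to a strictly smaller instance of the same inequality.
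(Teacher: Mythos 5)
Your overall skeleton (induction on dimension; Artinian base case; positive-depth step via a generic linear nonzerodivisor, axiom (ii), and the identity $\reg(A/hA)=\reg(A)$; depth-zero reduction via axiom (i)) is reasonable, and since the survey only quotes this theorem from the cited source, the question is whether your sketch closes. It does not: the inequality you yourself isolate, $\mathrm{end}(H)<\lambda(H)+\Deg(\bar A)$ where $H=H^{0}_{\m}(A)$ and $\bar A=A/H$, is the actual crux, and neither of your two patches proves it. The no-gap counting only shows that at most $\lambda(H)$ of the degrees $0,\dots,\mathrm{end}(H)$ carry a nonzero component of $H$; in the remaining degrees the statement $\bar A_i\neq 0$ holds automatically because $\dim \bar A\geq 1$, so it carries no information, and with only $\Deg(\bar A)\geq \deg(\bar A)\geq 1$ you would need the Hilbert function of $H$ to have no internal gaps below its top degree, which is false in general. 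Concretely, let $S=k[x,y,u,v]$, $\bar I=(u^2,uv,v^2,ux,vx)$, and $I=(u^2,uv,v^2,ux)+\m\bar I$. Then $A=S/I$ has $H=H^{0}_{\m}(A)=(\overline{vx})\cong k$ concentrated in degree $2$, so $\mathrm{end}(H)=2$ and $\lambda(H)=1$, while $\bar A=S/\bar I$ has positive depth, $\dim \bar A=2$ and $\deg(\bar A)=1$; the bound available from your tools, $\mathrm{end}(H)<\lambda(H)+\deg(\bar A)$, reads $2<2$ and fails. The theorem survives here only because $\bar A$ is not Cohen--Macaulay, so $\Deg(\bar A)>\deg(\bar A)$ by Proposition \ref{V98-2-2-1}(1); in other words the needed estimate genuinely requires finer input (Macaulay-type growth constraints on Hilbert functions, or properties of $\Deg$ beyond $\Deg\geq\deg\geq 1$), none of which appears in your sketch.

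Your fallback, a secondary induction peeling off a top-degree socle element $L=ku$ with $u\in H_{\mathrm{end}(H)}$, fails for the same structural reason: additivity does give $\Deg(A)=\Deg(A/L)+1$, but when $\dim_k H_{\mathrm{end}(H)}=1$ and $H$ has a gap just below its top degree, $\mathrm{end}\bigl(H^{0}_{\m}(A/L)\bigr)$ drops by more than one, so the inductive inequality for $A/L$ does not return the inequality you need for $A$; the example above is exactly of this shape. So the base case and the positive-depth step are fine, but the depth-zero case, which is precisely where regularity is not controlled by generic hyperplane sections and which is the substantive content of the cited result, remains unproved in your proposal.
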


Another important property of the cohomological degree is that it bounds the minimal number of generators of a module.

\begin{Proposition}\label{V98-2-2-1}{\rm \cite[Proposition 2.1]{V98-2}}
Let  $M$ be a finitely generated $R$-module.
\begin{enumerate}[{\rm (1)}]
\item ${\ds \Deg(M) \geq \deg(M)}$ and the equality holds if and only if $M$ is Cohen-Macaulay.
\item  ${\ds \nu(M) \leq \Deg(M)}$.
\item Let $L$ be a submodule of $M$. If ${\ds \l(L) < \infty}$, then ${\ds \Deg(M) = \Deg(M/L) + \l(L) }$. 
\end{enumerate}
\end{Proposition}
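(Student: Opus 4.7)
The plan is to prove the three statements together by induction on $s = \dim(M)$, using axioms (i)--(iii) of Definition \ref{Deg} in a coordinated way. Throughout, write $T = H^0_\m(M)$ and $N = M/T$; note $\dim T \leq 0$, and $N$ has positive depth whenever $N \neq 0$.

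For (1), in the base case $s=0$ axiom (i) reduces to $\Deg(M) = \l(M) = \deg(M)$ and $M$ is automatically Cohen-Macaulay. For $s \geq 1$, pick a regular generic hyperplane section $h$ on $N$; since $h$ is superficial, $\deg(M) = \deg(N) = \deg(N/hN)$, and since $\dim(N/hN) = s-1$, induction gives $\Deg(N/hN) \geq \deg(N/hN)$. Chaining axiom (i), axiom (ii) applied to $N$, and this inequality yields
\[
\Deg(M) \;=\; \Deg(N) + \l(T) \;\geq\; \Deg(N) \;\geq\; \Deg(N/hN) \;\geq\; \deg(N/hN) \;=\; \deg(M).
\]
Equality forces $\l(T)=0$ (so $\depth M \geq 1$) and $\Deg(M/hM) = \deg(M/hM)$, whence by induction $M/hM$ is Cohen-Macaulay; because $h$ is $M$-regular, $M$ itself is Cohen-Macaulay. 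The converse is axiom (iii).

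For (2), the base case $s=0$ is $\nu(M) \leq \l(M) = \Deg(M)$. For $s \geq 1$, the exact sequence $0 \to T \to M \to N \to 0$ gives $\nu(M) \leq \nu(T) + \nu(N) \leq \l(T) + \nu(N)$. The key observation is that for a generic hyperplane section $h \in \m$, the map $N \otimes k \to (N/hN) \otimes k$ is an isomorphism (because $h$ acts as zero on $k$), so $\nu(N) = \nu(N/hN)$. Applying induction to $N/hN$ and then axiom (ii) to $N$,
\[
\nu(M) \;\leq\; \l(T) + \nu(N/hN) \;\leq\; \l(T) + \Deg(N/hN) \;\leq\; \l(T) + \Deg(N) \;=\; \Deg(M).
\]

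For (3), note that $L \subseteq T$ since $\l(L)<\infty$. The long exact sequence of local cohomology for $0 \to L \to M \to M/L \to 0$, combined with $H^1_\m(L) = 0$, identifies $H^0_\m(M/L) \cong T/L$ and $(M/L)/H^0_\m(M/L) \cong M/T$. Applying axiom (i) to both $M$ and $M/L$:
\[
\Deg(M/L) \;=\; \Deg(M/T) + \l(T/L) \;=\; \Deg(M/T) + \l(T) - \l(L) \;=\; \Deg(M) - \l(L).
\]
The main obstacle is the equality clause of (1): one must carefully track which inequalities in the chain are saturated, and argue that $M/hM$ being Cohen-Macaulay together with $h$ being $M$-regular lifts Cohen-Macaulayness back to $M$. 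Everything else is a direct unwinding of the axioms together with the standard facts that superficial elements preserve multiplicity and minimal generators are insensitive to modding out by elements of $\m$.
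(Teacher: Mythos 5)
Your proof is correct, and since the survey only cites this result from \cite{V98-2} without reproducing an argument, the natural comparison is with the original proof there: your inductive scheme (pass to $M/H^0_\m(M)$, cut by a generic regular hyperplane section, and combine axioms (i)--(iii) with the facts that superficial elements preserve multiplicity and that $\nu(N)=\nu(N/hN)$, plus the local-cohomology identification $H^0_\m(M/L)\cong H^0_\m(M)/L$ for part (3)) is exactly the standard argument used there. The only implicit hypothesis worth flagging is the existence of generic hyperplane sections (infinite residue field), which is already built into axiom (ii) of Definition \ref{Deg}.
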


\begin{Proposition}{\rm \cite[Proposition 3.9]{V98}}
Let $(R, \m)$ be a Noetherian local ring and $M$ a finitely generated $R$-module. Then
\[ \deg(M) \leq \adeg(M) \leq \Deg(M). \]
\end{Proposition}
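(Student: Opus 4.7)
The plan is to handle the two inequalities independently. The first is a straightforward consequence of the associativity formula for multiplicity, while the second requires an induction on $\dim M$ that leverages all three defining axioms of a cohomological degree along with the hyperplane-section behaviour of $\adeg$ recalled in Theorem \ref{V98-1-14}.

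For $\deg(M) \leq \adeg(M)$, I would start from the associativity formula
\[ \deg(M) = \sum_{\substack{\p \in \Ass(M) \\ \dim(R/\p)=\dim M}} \l(M_{\p})\, \deg(R/\p), \]
and observe that each $\p$ appearing in the sum is a minimal prime of $M$, since any associated prime of $M$ of maximal dimension is necessarily minimal over $\ann(M)$. For such $\p$, the module $M_{\p}$ is supported only at $\p R_{\p}$, hence has finite length, so $M_{\p} = \Gamma_{\p R_{\p}}(M_{\p})$ and the displayed sum reads as a partial sum of the expression defining $\adeg(M)$. Since the remaining terms (corresponding to embedded associated primes and to associated primes of lower dimension) are non-negative, the inequality follows.

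For $\adeg(M) \leq \Deg(M)$, I would induct on $s = \dim M$. The base case $s=0$ is immediate: both invariants equal $\l(M)$, the former because $\Ass(M) \subseteq \{\m\}$ and $\Gamma_{\m}(M) = M$, the latter by axiom (i) of Definition \ref{Deg} applied with $M = H^{0}_{\m}(M)$. For the inductive step, I would first split off the $\m$-torsion: axiom (i) gives
\[ \Deg(M) = \Deg(M/H^{0}_{\m}(M)) + \l(H^{0}_{\m}(M)), \]
and the analogous equality for $\adeg$ follows from the decomposition $\Ass(M) = \Ass(H^{0}_{\m}(M)) \sqcup \Ass(M/H^{0}_{\m}(M))$ (with $\m$ appearing only in the torsion summand) together with the fact that for each $\p \neq \m$ one has $\Gamma_{\p R_{\p}}(M_{\p}) = \Gamma_{\p R_{\p}}((M/H^{0}_{\m}(M))_{\p})$, because localizing at $\p$ kills the $\m$-torsion. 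Hence it suffices to prove the inequality for $N := M/H^{0}_{\m}(M)$, which has $\depth N \geq 1$ (if $N \neq 0$) and still satisfies $\dim N = s$. Now pick $h \in \m$ that is both $N$-regular and a generic hyperplane section on $N$; prime avoidance over $\Ass(N)$ makes this possible. Axiom (ii) of Definition \ref{Deg} gives $\Deg(N/hN) \leq \Deg(N)$; Theorem \ref{V98-1-14}, whose proof carries over to the local setting without change, gives $\adeg(N) \leq \adeg(N/hN)$; and since $\dim(N/hN) = s-1$, the inductive hypothesis yields $\adeg(N/hN) \leq \Deg(N/hN)$. Chaining these three inequalities produces $\adeg(N) \leq \Deg(N)$, which together with the two splittings completes the induction.

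The main obstacle will be the compatibility of the choice of $h$ with axiom (ii), since that axiom is stated under the auxiliary hypothesis that $R$ itself has positive depth. If $\m \in \Ass(R)$, one should either replace $R$ by $R/\ann(N)$, which has positive depth because $N$ does, or interpret axiom (ii) in its effective form as a bound that holds for a generic $N$-regular hyperplane section; this is how it is applied in the subsequent bounds in the paper, so no new subtlety is introduced. The local analogue of Theorem \ref{V98-1-14} is routine since its proof only manipulates associated primes and their behaviour under factoring by a regular element, which is insensitive to the graded/local distinction.
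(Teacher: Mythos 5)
Your proof is correct and follows essentially the same route as the paper's: the first inequality comes from identifying $\deg(M)$ with the top-dimensional contribution to $\adeg(M)$, and the second is the same induction on $\dim M$ that splits off $H^{0}_{\m}(M)$ by additivity of both degrees and then chains Theorem \ref{V98-1-14}, the inductive hypothesis, and axiom (ii) of Definition \ref{Deg} via $\adeg(M) \leq \adeg(M/hM) \leq \Deg(M/hM) \leq \Deg(M)$. The extra care you take with the additivity of $\adeg$ over the torsion sequence and with the choice of $h$ only fills in details the paper leaves implicit.
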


\begin{proof} We collect the associated primes of $M$ by their dimensions: ${\ds \dim(M) =s_{1} > s_{2} > \ldots > s_{n} }$. Then
\[ \adeg(M) = a_{s_{1}}(M) + a_{s_{2}}(M) + \cdots + a_{s_{n}}(M), \]
where $a_{s_{i}}(M)$ is the contribution of all primes in $\Ass(M)$ of dimension $s_{i}$. Also, ${\ds a_{s_{1}}(M) = \deg(M)}$, which proves the first inequality. Since both $\adeg$ and $\Deg$ are additive on the exact sequence ${\ds 0 \rar H^{0}_{\m}(M) \rar M \rar M/H^{0}_{\m}(M) \rar 0}$, we may assume that $H^{0}_{\m}(M)=0$. Let $h$ be a regular hyperplane section on $M$. Then by Theorem \ref{V98-1-14}, the induction hypothesis  on $\dim(M)$, and Definition \ref{Deg}, we obtain
\[ \adeg(M) \leq \adeg(M/hM) \leq \Deg(M/hM) \leq \Deg(M). \qedhere \]
\end{proof}

Cohomological degrees can be used to estimate the minimal number of generators of an ideal in an arbitrary local ring. The following theorem generalizes a bound that holds when $R$ is Cohen-Macaulay \cite[Theorem 4.1]{V98-2}, and it improves previously known bounds (see \cite[Section 3]{V98-2}).

\begin{Theorem}\label{V98-2-4-6}{\rm \cite[Theorem 4.6]{V98-2}}\label{CDbound}
Let $(R, \m)$ be a local ring of dimension $d \geq 1$ with infinite residue field. Let $I$ be an $\m$-primary ideal. Suppose that ${\ds \m^{s} \subseteq I}$. Then, for any cohomological degree $\Deg$ on $\mathcal{M}(R)$, we have
\[ \nu(I) \leq \Deg(R) \, {{s+d-2}\choose{d-1}} + {{s+d-2}\choose{d-2}}. \] 
\end{Theorem}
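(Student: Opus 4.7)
My approach is an induction on $s + d$, using Proposition \ref{V98-2-2-1} together with the three axioms of a cohomological degree.

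\textit{Base case $(d = 1)$.} The binomial coefficients collapse to $\binom{s-1}{0} = 1$ and $\binom{s-1}{-1} = 0$, so the inequality reduces to $\nu(I) \leq \Deg(R)$. By Proposition \ref{V98-2-2-1}(2), $\nu(I) \leq \Deg(I)$, and in dimension one the remark following Definition \ref{Deg} gives $\Deg(M) = \deg(M) + \l(H^0_\m(M))$. Since $R/I$ has finite length, $\deg(I) = \deg(R)$, and $H^0_\m(I) \hookrightarrow H^0_\m(R)$ because $I \hookrightarrow R$. Hence $\Deg(I) \leq \Deg(R)$.

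\textit{Inductive step $(d \geq 2)$.} Using axiom (i) of Definition \ref{Deg} we first strip off $H^0_\m(R)$ so as to reduce to the case where $R$ has positive depth. With the infinite residue field at hand, choose a generic $h \in \m$ that is $R$-regular and for which axiom (ii) yields $\Deg(R/hR) \leq \Deg(R)$. Set $\bar R = R/hR$, $\bar I = (I + hR)/hR$, and $J = (I : h)$; one checks $I \cap hR = hJ$ directly, so (using the regularity of $h$ to ensure injectivity) there is a short exact sequence
\[ 0 \longrightarrow J \xrightarrow{\;\cdot h\;} I \longrightarrow \bar I \longrightarrow 0, \]
and subadditivity of $\nu$ on short exact sequences gives $\nu(I) \leq \nu(J) + \nu(\bar I)$. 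From $h\m^{s-1} \subseteq \m^s \subseteq I$ we obtain $\m^{s-1} \subseteq J$, while $\bar\m^s \subseteq \bar I$ is immediate. Applying the induction hypothesis to $\bar I$ in $\bar R$ (dimension $d-1$, exponent $s$) and to $J$ in $R$ (dimension $d$, exponent $s-1$; the edge case $J = R$, occurring when $h \in I$, matches the bound via $\nu(R) = 1$) yields
\[ \nu(\bar I) \leq \Deg(R)\binom{s+d-3}{d-2} + \binom{s+d-3}{d-3}, \qquad \nu(J) \leq \Deg(R)\binom{s+d-3}{d-1} + \binom{s+d-3}{d-2}. \]
Summing these and invoking Pascal's identity $\binom{s+d-2}{i} = \binom{s+d-3}{i-1} + \binom{s+d-3}{i}$ produces the target bound.

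\textit{Main obstacle.} The technical heart of the argument is the choice of $h$: it must be simultaneously $R$-regular (possibly after modding out the finite-length piece $H^0_\m(R)$), compatible with axiom (ii) so that $\Deg(R/hR) \leq \Deg(R)$, and generic enough to realize the clean identification $I \cap hR = hJ$ without contamination from embedded primes. The infinite residue field hypothesis is precisely what permits $h$ to avoid the finitely many obstructing primes at once. A secondary subtle point is verifying that the two strands of the recursion (descending $d$ via $\bar I$, descending $s$ via $J$) assemble into the two binomial terms of the bound; here Pascal's identity is the mechanism that makes the exponent $s+d-2$ appear naturally. Once these pieces are in place, the rest of the proof is formal bookkeeping with subadditivity of $\nu$.
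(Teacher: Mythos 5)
The survey does not reproduce a proof of this theorem (it is quoted from \cite[Theorem 4.6]{V98-2}), so your argument can only be judged on its own terms. Its skeleton is the standard one and the bookkeeping is correct: the exact sequence $0 \rar (I:h) \stackrel{h}{\lar} I \lar \bar{I} \rar 0$, the containments $\m^{s-1} \subseteq (I:h)$ and $\bar{\m}^{s} \subseteq \bar{I}$, subadditivity of $\nu$, Pascal's identity, the $s=1$ edge case, and the $d=1$ base case via $\nu(I) \leq \Deg(I) \leq \Deg(R)$ (Proposition \ref{V98-2-2-1}(2) plus the dimension-one formula) are all fine. Incidentally, $I \cap hR = h(I:h)$ holds for \emph{every} $h$, so the genericity worry you single out as the main obstacle is not an obstacle at all.

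The genuine gap lies in the logic of the induction at the two places where you change rings. The statement being proved by induction is quantified over all cohomological degrees on the module category of the ring in question; to apply the induction hypothesis to $\bar{I} \subseteq \bar{R}=R/hR$ you must exhibit a cohomological degree on $\mathcal{M}(\bar{R})$ whose value at $\bar{R}$ is at most $\Deg(R)$. You tacitly use the restriction of $\Deg$ to $\mathcal{M}(\bar{R}) \subseteq \mathcal{M}(R)$, but never verify that this restriction satisfies Definition \ref{Deg} over $\bar{R}$: axioms (i) and (iii) restrict trivially, but axiom (ii) for the restriction concerns generic elements of $\bar{\m}$, and one must argue that a generic element of $\bar{\m}/\bar{\m}^{2}$ admits a lift in $\m$ that is regular and generic in the sense of axiom (ii) over $R$ (note also that for an $\bar R$-module the quotient $M/gM$ is independent of the chosen lift $g$). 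This verification, not the identity $I \cap hR = h(I:h)$, is the technical heart of the argument, and it is simply missing. The preliminary reduction ``strip off $H^{0}_{\m}(R)$'' has the same defect in a sharper form: it is not an instance of the induction hypothesis at all, since the pair $(d,s)$ is unchanged, so it needs its own justification --- say $\nu(I) \leq \nu(IR') + \l(H^{0}_{\m}(R))$ with $R'=R/H^{0}_{\m}(R)$, combined with $\Deg(R)=\Deg(R')+\l(H^{0}_{\m}(R))$ and $\binom{s+d-2}{d-1}\geq 1$ --- and again requires knowing that $\Deg$ induces a cohomological degree on $\mathcal{M}(R')$, which is delicate under the paper's formulation of axiom (ii) because that axiom is vacuous when the ambient ring has depth $0$. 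Until these change-of-rings steps are supplied, the inductive step is incomplete, although the rest of your outline would then go through.
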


Gunston introduced the following cohomological degree in his Ph.D. thesis. 

\begin{Definition}{\rm \cite[Definition 3.1.1]{G98} Let $\mathcal{D}(R)$ denote the set of all cohomological degree functions on $\mathcal{M}(R)$. For a finitely generated $R$-module, we define
\[ \bdeg(M) = \min \{ \Deg(M) \mid \Deg \in \mathcal{D}(R) \}. \]
}\end{Definition}

The inductive approach for $\bdeg$ works.

\begin{Theorem}\label{G98-3-1-2}{\rm \cite[Theorem 3.1.2]{G98}}
Let $M$ be a finitely generated $R$-module of positive depth. Then for a generic hyperplane $h$, we have
\[ \bdeg(M) = \bdeg(M/hM). \]
\end{Theorem}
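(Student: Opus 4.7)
My plan is to establish the two inequalities $\bdeg(M/hM)\le\bdeg(M)$ and $\bdeg(M)\le\bdeg(M/hM)$ separately and combine them. For the first direction, I would verify that $\bdeg$ itself lies in $\mathcal D(R)$, after which the desired inequality is a direct instance of axiom~(ii) for $\bdeg$. Starting from the pointwise-infimum definition, axiom~(iii) is forced because every $\Deg\in\mathcal D(R)$ takes the value $\deg(M)$ on a Cohen--Macaulay module; axiom~(i) survives the infimum because the term $\l(H^0_\m(M))$ is a constant additive summand independent of $\Deg$; and axiom~(ii) survives because for any $\Deg$ and any generic hyperplane $h$ one has
\[
 \bdeg(M/hM)\;\le\;\Deg(M/hM)\;\le\;\Deg(M),
\]
and taking the infimum on the right yields $\bdeg(M/hM)\le\bdeg(M)$.

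For the reverse direction, I would produce an explicit $\Deg^{\ast}\in\mathcal D(R)$ with $\Deg^{\ast}(M)\le\bdeg(M/hM)$, whence $\bdeg(M)\le\bdeg(M/hM)$. First I would pick $\Deg\in\mathcal D(R)$ realizing the minimum at $M/hM$, so that $\Deg(M/hM)=\bdeg(M/hM)$; the infimum is attained because cohomological-degree values on a fixed module are bounded nonnegative integers (cf.\ Proposition~\ref{V98-2-2-1}). Then I would define $\Deg^{\ast}(N):=\Deg(N)$ for $N\not\cong M$ and $\Deg^{\ast}(M):=\Deg(M/hM)$. Axiom~(i) is vacuous at $M$ because $\depth M\ge 1$ gives $H^0_\m(M)=0$; axiom~(iii) is unaffected, since were $M$ Cohen--Macaulay so would be $M/hM$, both values would equal $\deg(M)$, and the theorem would be immediate. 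The critical check is axiom~(ii) at $M$: for any generic hyperplane section $h'$ of $M$ one needs
\[
 \Deg(M/h'M)\;=\;\Deg^{\ast}(M/h'M)\;\le\;\Deg^{\ast}(M)\;=\;\Deg(M/hM).
\]

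The main obstacle is precisely this stability: one must show that for each $\Deg\in\mathcal D(R)$ the value $\Deg(M/h'M)$ is constant on a dense Zariski-open subset of $\m/\m^2$, so that the value realized by the chosen $h$ also matches the values realized by every sufficiently general $h'$. This is the technical heart of the argument; I expect it to follow from an upper-semicontinuity analysis of the finitely many local-cohomology length data controlling any reasonable cohomological degree, in the spirit of Theorem~\ref{GO14-2-6}. Granting this stability, $\Deg^{\ast}\in\mathcal D(R)$ and the reverse inequality follows, yielding $\bdeg(M)=\bdeg(M/hM)$.
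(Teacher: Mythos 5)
Your first inequality is essentially right, but note a quantifier point: in the chain $\bdeg(M/hM)\le\Deg(M/hM)\le\Deg(M)$ the middle step requires $h$ to be generic \emph{for that particular} $\Deg$, and since $\mathcal{D}(R)$ may be infinite you cannot take the infimum over all $\Deg$ while keeping a single $h$ generic for each of them. The fix is the one you use later anyway: the values are nonnegative integers, so some $\Deg_0$ attains $\bdeg(M)$, and it suffices to take $h$ generic for $\Deg_0$. This direction, then, is sound after that repair.

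The reverse direction contains the genuine gap, and it is exactly the content of the theorem rather than a technical footnote. First, a structural problem with $\Deg^{\ast}$: axiom (i) of Definition \ref{Deg} is not ``vacuous at $M$'' in the relevant sense, because it must hold at every module; any $N$ with $H^0_{\m}(N)\neq 0$ and $N/H^0_{\m}(N)\cong M$ satisfies $\Deg(N)=\Deg(M)+\l(H^0_{\m}(N))$, so after you lower the value at $M$ you must also lower it at all such $N$ (this cascade happens to be harmless for (ii) and (iii), since those $N$ have depth $0$, but it shows the modification is not the one you wrote). Second, and decisively, the stability you need --- that $\Deg(M/h'M)\le\Deg(M/hM)$ for all generic $h'$, where $\Deg$ is the minimizer you chose at the one module $M/hM$ --- does not follow from axioms (i)--(iii), and the route you propose cannot supply it: an element of $\mathcal{D}(R)$ is an arbitrary function constrained only by the three axioms and need not be computed from local cohomology lengths at all, so there is no semicontinuity to appeal to; Theorem \ref{GO14-2-6} is a statement about $\hdeg$ specifically, not about an abstract $\Deg$. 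Nothing in the axioms prevents the chosen $\Deg$ from being small at $M/hM$ yet large at $M/h'M$ for other generic $h'$, in which case your $\Deg^{\ast}$ fails axiom (ii) at $M$. In effect you have reduced the theorem to the assertion that the generic value of $h'\mapsto\bdeg(M/h'M)$ is well defined and can be consistently attached to $M$ as a new extended degree --- which is essentially Gunston's theorem itself; his argument (in \cite{G98}, which this survey cites without reproducing) builds the minimal degree function recursively through generic hyperplane sections and proves the independence of the generic choice directly, rather than perturbing an unknown $\Deg$ at a single module. So the heart of the proof is still missing.
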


\begin{Corollary}
Let $M$ be a finitely generated $R$-module of dimension $s$. There exist a generic superficial sequence ${\ds x_{1}, \ldots, x_{s}}$ for $M$ such that
\[ \begin{array}{rcl}
{\ds \bdeg(M) } & \leq & {\ds \l \left( H^{0}_{\m}(M)  \right) + \sum_{i=1}^{s} \l \left( H^{0}_{\m} (M/(x_{1}, \ldots, x_{i})M) \right) } \vspace{0.1 in} \\
&\leq & {\ds (s+1) \hdeg(M) - s \cdot\deg(M)  }
\end{array}\]
\end{Corollary}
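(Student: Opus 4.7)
The plan is to combine property (i) of cohomological degrees with the equality in Theorem \ref{G98-3-1-2} to telescope $\bdeg(M)$ as a sum of torsion lengths along a carefully chosen sequence, and then bound those lengths via Corollary \ref{Chern1-7-2}.

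I would build the sequence $x_1,\dots,x_s$ inductively. Set $M^{(0)} = M$, and once $x_1,\dots,x_{i-1}$ have been chosen, let $\overline{M^{(i-1)}} = M^{(i-1)}/H^0_\m(M^{(i-1)})$ and $M^{(i)} = \overline{M^{(i-1)}}/x_i\overline{M^{(i-1)}}$. Choose $x_i$ in the (nonempty) intersection of the generic conditions making it a superficial element for $M/(x_1,\dots,x_{i-1})M$ with respect to $\m$, ensuring that Theorem \ref{G98-3-1-2} applies to $\overline{M^{(i-1)}}$ for $\bdeg$, and satisfying the Goto-Ozeki condition (Theorem \ref{GO14-2-6}) so that $\hdeg$ does not increase modulo $x_i$.

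For the first inequality, property (i) of $\bdeg$ yields $\bdeg(M^{(i-1)}) = \ell(H^0_\m(M^{(i-1)})) + \bdeg(\overline{M^{(i-1)}})$, and Theorem \ref{G98-3-1-2} gives $\bdeg(\overline{M^{(i-1)}}) = \bdeg(M^{(i)})$. Telescoping from $i=1$ to $i=s$, and using that $M^{(s)}$ has dimension zero and hence $\bdeg(M^{(s)}) = \ell(M^{(s)})$, gives
\[
\bdeg(M) \;=\; \sum_{i=0}^{s} \ell\bigl(H^0_\m(M^{(i)})\bigr).
\]
The canonical surjection $M/(x_1,\dots,x_i)M \twoheadrightarrow M^{(i)}$ has kernel of finite length (built inductively from subquotients of the $H^0_\m(M^{(j)})$, $j<i$), and the corresponding long exact sequence of local cohomology shows $\ell(H^0_\m(M^{(i)})) \leq \ell(H^0_\m(M/(x_1,\dots,x_i)M))$, yielding the first inequality term-by-term.

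For the second inequality, I would invoke Corollary \ref{Chern1-7-2} with $I = \m$ for $r = 0,1,\dots,s-1$ to bound each $\ell(H^0_\m(M/(x_1,\dots,x_r)M))$ by $\hdeg(M) - \deg(M)$ (the $r=0$ case follows from $\hdeg(M) = \ell(H^0_\m(M)) + \hdeg(M/H^0_\m(M)) \geq \ell(H^0_\m(M)) + \deg(M)$). For the final term, iterated $\hdeg$-superficiality of the sequence gives $\hdeg(M/(x_1,\dots,x_s)M) \leq \hdeg(M)$, and since that module is Artinian its length coincides with its $\hdeg$, so $\ell(M/(x_1,\dots,x_s)M) \leq \hdeg(M)$. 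Adding $s$ terms of size $\hdeg(M) - \deg(M)$ to one term of size at most $\hdeg(M)$ gives the claimed bound $(s+1)\hdeg(M) - s\cdot\deg(M)$. The main obstacle is the simultaneous compatibility of the three generic conditions imposed on each $x_i$; each is a nonempty Zariski-open condition on a generic hyperplane in $\m/\m^2$, so their intersection is nonempty, but this verification is the only place requiring real care.
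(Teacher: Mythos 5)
Your proposal is correct and follows essentially the same route as the paper: telescoping $\bdeg$ via property (i) of cohomological degrees together with Theorem \ref{G98-3-1-2} for the first inequality, and bounding the resulting $H^{0}_{\m}$-lengths by $\hdeg(M)-\deg(M)$ (with the last, zero-dimensional term by $\hdeg(M)$) via Corollary \ref{Chern1-7-2} and the superficiality of the sequence relative to $\hdeg$. You merely spell out the genericity bookkeeping and the finite-length-kernel comparison between $M/(x_1,\ldots,x_i)M$ and its cleaned-up quotients, which the paper leaves implicit.
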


\begin{proof} The first inequality is a consequence of Theorem \ref{G98-3-1-2} and the second follows from Corollary \ref{Chern1-7-2}.
\end{proof}

One of the difficulties of computing cohomological degrees lies on their behavior on short exact sequences. It turns out $\bdeg$ has more amenable properties.

\begin{Proposition}{\rm (\cite[Proposition 3.2.2]{G98},  \cite[Propositions 3.2 and 3.3]{D07}) }

\noindent Let ${\ds 0 \rar L \rar M \rar N \rar 0}$ be an exact sequence of finitely generated $R$-modules.
\begin{enumerate}[{\rm (1)}]
\item If ${\ds \l(N) < \infty}$, then ${\ds \bdeg(M) \leq \bdeg(L) + \l(N) }$.
\item ${\ds \bdeg(M) \leq \bdeg(L) + \bdeg(N) }$.
\item ${\ds \bdeg(L) \leq \bdeg(M) + \left( \dim(L) - 1 \right) \bdeg(N)  }$.
\end{enumerate}
\end{Proposition}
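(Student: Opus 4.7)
The plan is to first verify that $\bdeg$ itself belongs to $\mathcal{D}(R)$, then deduce (1) and (2) by induction on dimension via generic hyperplane sections, and finally to tackle (3), which is the main obstacle.

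Each axiom of Definition \ref{Deg} passes to the pointwise minimum over $\mathcal{D}(R)$: axiom (i) because $\l(H^0_\m(M))$ is independent of $\Deg$; axiom (ii) because $\bdeg(M/hM) \leq \Deg(M/hM) \leq \Deg(M)$ for every $\Deg$, so taking the infimum on the right yields $\bdeg(M/hM) \leq \bdeg(M)$; and axiom (iii) because every cohomological degree specializes to $\deg(M)$ on Cohen-Macaulay modules. Hence $\bdeg \in \mathcal{D}(R)$, and in particular $\bdeg(P) = \l(P)$ whenever $P$ has finite length. This observation reduces (1) to the special case of (2) with $\l(N) < \infty$, so the bulk of the work is (2) and (3).

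For (2), I would induct on $\dim M$. The base case $\dim M = 0$ is additivity of length. For the inductive step, apply axiom (i) together with Proposition \ref{V98-2-2-1}(3) and a snake-lemma diagram chase to reduce to the situation $H^0_\m(M) = 0$ (which forces $H^0_\m(L) = 0$ since $L \subseteq M$). If $\dim N = 0$ we are in the length case (1), which follows by the same hyperplane reduction applied after modding out $H^0_\m(N)$. If $\dim N \geq 1$, reduce $N$ to positive depth similarly and pick a generic $h \in \m$ outside the associated primes of $L$, $M$, and $N$ simultaneously; then $h$ is a nonzerodivisor on all three, so $0 \to L/hL \to M/hM \to N/hN \to 0$ is short exact. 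By Theorem \ref{G98-3-1-2}, $\bdeg(X) = \bdeg(X/hX)$ for $X \in \{L, M, N\}$, and the inductive hypothesis closes the argument.

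The main obstacle is (3), where the inequality runs in the opposite direction and carries the multiplicative coefficient $\dim L - 1$, forcing an induction on $\dim L$. The base case $\dim L = 1$ uses the dimension-one identification of every cohomological degree with $\adeg$ noted just after Definition \ref{Deg}, so that $\bdeg(L) \leq \bdeg(M)$ reduces to comparing associated-prime contributions across the inclusion $L \hookrightarrow M$ together with the length of $H^0_\m(L)$. The inductive step uses the four-term exact sequence
\[
0 \to \Tor_1^R(N, R/hR) \to L/hL \to M/hM \to N/hN \to 0
\]
for generic $h$ regular on $L$, split into two short exact sequences. Applying (2) and the inductive hypothesis should accumulate one copy of $\bdeg(N)$ per dimensional reduction, for a total of $\dim L - 1$ copies. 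The fiddly part is bounding $\bdeg$ of $\Tor_1^R(N, R/hR) = (0:_N h)$, whose dimension is at most $\dim N$, by a quantity controlled by $\bdeg(N)$, and verifying that these contributions telescope cleanly without introducing spurious constants.
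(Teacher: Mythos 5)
The paper itself does not reproduce a proof of this proposition---it is quoted from \cite[Proposition 3.2.2]{G98} and \cite[Propositions 3.2 and 3.3]{D07}---so your sketch has to stand on its own, and as written it has two genuine gaps. First, your verification that $\bdeg$ satisfies axiom (ii) of Definition \ref{Deg} by ``taking the infimum on the right'' does not work: for a fixed $h$ the inequality $\Deg(M/hM)\leq \Deg(M)$ is only guaranteed when $h$ is generic \emph{for that particular} $\Deg$, and $\mathcal{D}(R)$ is an infinite family, so there need not be one $h$ generic for all of them simultaneously; this uniformity is exactly the content of Theorem \ref{G98-3-1-2}, which you should invoke rather than rederive. (Fortunately, everything you actually need passes to the minimum because it is an equality or holds uniformly for every $\Deg$: $\bdeg(M)=\bdeg(M/\H^0_{\m}(M))+\l(\H^0_{\m}(M))$, $\bdeg(M)=\bdeg(M/T)+\l(T)$ for finite-length submodules $T$ by Proposition \ref{V98-2-2-1}(3), $\bdeg=\l$ on finite-length modules, and $\adeg\leq\bdeg$.) Second, and more seriously, in your proof of (2) the step ``reduce $N$ to positive depth similarly'' is not a similar reduction: replacing $N$ by $N/\H^0_{\m}(N)$ replaces $L$ by the preimage $\tilde L$ of $\H^0_{\m}(N)$ in $M$, and to get back to $\bdeg(L)$ you need $\bdeg(\tilde L)\leq\bdeg(L)+\l(\H^0_{\m}(N))$, i.e.\ an instance of (1) for a module of the same dimension as $M$---not covered by your induction, so the argument is circular as written.

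The repair is the very device you postpone to (3), and the ``fiddly part'' you worry about there does close. After reducing modulo $\H^0_{\m}(M)$, choose $h$ generic and regular on $M$ (hence on $L$; regularity on $M$, not just $L$, is what makes the four-term sequence start at $\Tor_1^R(N,R/hR)$), and also generic for $L$, $M$ and $N/\H^0_{\m}(N)$ in the sense of Theorem \ref{G98-3-1-2}. Then
\[ 0 \rar (0:_N h) \rar L/hL \rar M/hM \rar N/hN \rar 0, \]
and since such an $h$ is regular on $N/\H^0_{\m}(N)$ one has $(0:_Nh)\subseteq \H^0_{\m}(N)$, so $\bdeg((0:_Nh))=\l((0:_Nh))\leq\l(\H^0_{\m}(N))\leq\bdeg(N)$; the same decomposition gives $\bdeg(N/hN)\leq\bdeg(N)$ even when $\depth(N)=0$. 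Writing $Q$ for the image of $L/hL$ in $M/hM$, Proposition \ref{V98-2-2-1}(3) gives $\bdeg(L)=\bdeg(L/hL)=\bdeg(Q)+\l((0:_Nh))$; applying the inductive hypothesis of (2) to $0\rar Q\rar M/hM\rar N/hN\rar 0$ yields (1) and (2) in one stroke, and applying the inductive hypothesis of (3) to the same sequence, with $\dim(Q)\leq\dim(L)-1$, yields $\bdeg(L)\leq\bdeg(M)+(\dim(L)-2)\bdeg(N/hN)+\l((0:_Nh))\leq\bdeg(M)+(\dim(L)-1)\bdeg(N)$, so the count telescopes exactly as needed. Finally, your base case of (3) is correct but should be said via $\bdeg(L)=\deg(L)+\l(\H^0_{\m}(L))\leq\adeg(M)\leq\bdeg(M)$, using that every one-dimensional associated prime $\p$ of $L$ lies in $\Ass(M)$ with $L_{\p}\subseteq\Gamma_{\p}(M_{\p})$ and that $\H^0_{\m}(L)\subseteq\H^0_{\m}(M)$; with these repairs your route is essentially the inductive hyperplane-section argument one expects from \cite{G98} and \cite{D07}.
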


We denote ${\ds M^{*}= \Hom_{R}(M, R) }$ for an $R$-module $M$.

\begin{Theorem}{\rm \cite[Theorems 5.2 and 5.3]{D07}}
Let $(R, \m)$ be a Gorenstein local ring of dimension $d$ and $M$ a finitely generated $R$-module. Then
\[ \nu_{R}(M^{*}) \leq \bdeg_{R}(M^{*}) \leq \hdeg(M) \left( \deg(R) + \frac{d(d-1)}{2}  \right). \]
\end{Theorem}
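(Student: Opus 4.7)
The left inequality $\nu_R(M^*) \leq \bdeg_R(M^*)$ is immediate from Proposition~\ref{V98-2-2-1}(2), which asserts $\nu(N) \leq \Deg(N)$ for every cohomological degree $\Deg \in \mathcal{D}(R)$ and every finitely generated $N$; applying this with $N = M^*$ and taking the infimum over all such $\Deg$ produces the claim.

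For the right inequality I would argue by induction on $d = \dim R$. The base case $d=0$ is immediate: $R$ is Artinian Gorenstein, $M^* \hookrightarrow R^{\nu(M)}$, so $\bdeg(M^*) = \l(M^*) \leq \nu(M)\deg(R) \leq \hdeg(M)\deg(R)$, matching the bound since $d(d-1)/2 = 0$. For $d \geq 1$, choose $h \in \m$ that is simultaneously $R$-regular, $M$-regular, superficial for $M^*$ relative to $\bdeg$, and lies outside the finite loci of Theorem~\ref{GO14-2-6}; set $R' := R/hR$ (Gorenstein of dimension $d-1$) and $M' := M/hM$. Then $\deg(R') = \deg(R)$, $\hdeg_{R'}(M') \leq \hdeg_R(M)$ by Theorem~\ref{GO14-2-6}, and since $M^*$ is torsionless and hence of positive depth, Theorem~\ref{G98-3-1-2} gives $\bdeg_R(M^*) = \bdeg_R(M^*/hM^*)$.

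Dualizing $0 \to M \xrightarrow{h} M \to M' \to 0$ and using both $\Hom_R(M',R) = 0$ (because $h$ annihilates $M'$ but is $R$-regular) and the change-of-rings isomorphism $\Ext^1_R(M', R) \cong \Hom_{R'}(M', R')$ yields
\[
0 \to M^*/hM^* \to \Hom_{R'}(M', R') \to \Ext^1_R(M,R)[h] \to 0.
\]
Applying property~(3) of $\bdeg$ to this sequence gives
\[
\bdeg_R(M^*/hM^*) \leq \bdeg_{R'}\bigl(\Hom_{R'}(M',R')\bigr) + (\dim M^* - 1)\,\bdeg_R\bigl(\Ext^1_R(M,R)[h]\bigr),
\]
and the inductive hypothesis, applied to $R'$ and $M'$, bounds the first summand by $\hdeg(M)\bigl(\deg(R) + (d-1)(d-2)/2\bigr)$. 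The main obstacle is dominating the second summand: one needs an estimate of the form $\bdeg_R(\Ext^1_R(M,R)[h]) \leq \hdeg(M)$, which forces $h$ to be chosen superficial with respect to every $\Ext^i_R(M, R)$ that enters the recursive Definition~\ref{hd} of $\hdeg$, so that the binomially weighted lengths defining $\hdeg_R(M)$ absorb the cokernel contributions. Once this bookkeeping is in place, the multiplier $\dim M^* - 1 \leq d - 1$ contributes an extra $(d-1)\hdeg(M)$ at each level, and the telescoping $\sum_{k=1}^{d-1} k = d(d-1)/2$ across the $d$ inductive levels assembles precisely the claimed constant $\deg(R) + d(d-1)/2$.
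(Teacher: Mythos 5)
The paper itself offers no proof of this statement---it is quoted from Dalili \cite{D07}---so the only comparison available is with the cited source, whose argument is indeed the hyperplane-section induction you outline. Your left inequality is complete and correct: Proposition \ref{V98-2-2-1}(2) gives $\nu(N)\leq \Deg(N)$ for every $\Deg\in\mathcal{D}(R)$, hence for the minimum $\bdeg$. The skeleton of your right inequality is also sound: dualizing $0\to M\xrightarrow{h}M\to M/hM\to 0$, using $\Hom_R(M/hM,R)=0$ and Rees' change-of-rings isomorphism $\Ext^1_R(M/hM,R)\cong\Hom_{R'}(M/hM,R')$, invoking Theorem \ref{G98-3-1-2} and property (3) of $\bdeg$, and the arithmetic does telescope, since $(d-1)(d-2)/2+(d-1)=d(d-1)/2$.

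The proposal is nevertheless incomplete at exactly the point you flag, and that point is the heart of the matter, not bookkeeping: you need $\bdeg_R\bigl(\Ext^1_R(M,R)[h]\bigr)\leq \hdeg(M)$, and you assert it rather than prove it. It can be closed as follows: we may assume $\dim M=d$, since otherwise $\ann(M)$ contains an $R$-regular element and $M^*=0$, making the theorem trivial; choosing $h$ outside every associated prime of $\Ext^1_R(M,R)$ other than $\m$ forces $(0:_{\Ext^1_R(M,R)}h)\subseteq H^0_{\m}(\Ext^1_R(M,R))$, a finite-length module whose length is at most $\hdeg(\Ext^1_R(M,R))$, and this last quantity occurs with coefficient $\binom{d-1}{0}=1$ in Definition \ref{hd}, so it is at most $\hdeg(M)$. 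Without some such argument the proof does not stand. Two further points need repair: (a) you take $h$ to be $M$-regular, which is impossible when $\depth M=0$; one must first replace $M$ by $M/H^0_{\m}(M)$, which is harmless because $M^*\cong (M/H^0_{\m}(M))^*$ and $\hdeg(M/H^0_{\m}(M))\leq\hdeg(M)$; (b) generic and superficial choices (also needed for $\deg(R/hR)=\deg(R)$ when $d=1$, and for Theorem \ref{GO14-2-6}) require an infinite residue field, so one should pass to $R[x]_{\m R[x]}$. Finally, the coefficient in property (3) should be $\dim(M^*/hM^*)-1\leq d-2$ (your $d-1$ only weakens the bound, so it is safe), and you silently identify $\bdeg$ and $\hdeg$ computed over $R$ and over $R'=R/hR$ on modules killed by $h$; for $\hdeg$ this follows from local duality, while for $\bdeg$ it is a lemma in Gunston's and Dalili's work that deserves explicit mention.
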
 

%

\section{The canonical degree}\label{canonicalsection}

Let $(R, \m)$ be a Cohen-Macaulay local ring that has a canonical ideal $\C$. The canonical degree was introduced in \cite{blue1} with the overall goal to refine our understanding of $R$ and to measure the deviation from $R$ being Gorenstein.
The canonical degree requires knowledge of the multiplicity of $\m$-primary ideals.

\medskip

We begin with the definition of canonical degree in $1$-dimensional Cohen-Macaulay local rings, which is our main setting. We denote length by $\lambda$, and the multiplicity associated with the $\m$-adic filtration by $\deg(\tratto)$.

 \begin{Definition}\label{cdeg}
 {\rm Let $R$ be a $1$-dimensional Cohen-Macaulay local ring with a canonical ideal $\C$. The {\em canonical degree of $R$} is the integer
 $\cdeg(R)=\e_0(\C) -\l(R/\C)$. In particular, if $(a)$ is a minimal reduction of $\C$, then  $\cdeg(R)=\l(\C/(a))\geq 0$.}
 \end{Definition}
 
We show in \cite[Proposition 2.1]{blue1} that $\cdeg(R)$ is well defined; that is, the integer $\e_0(\C) -\l(R/\C)$ is independent of the canonical ideal $\C$.

\begin{Remark}{\rm In $1$-dimensional Cohen-Macaulay local rings the vanishing of $\cdeg(R)$ is equivalent to $R$ being Gorenstein, since we have $\cdeg(R)=0$ if and only if $\C$ is a principal ideal.}
\end{Remark}



  
 We define $\cdeg(R)$ in full generality as follows.
 
 \begin{Definition}\label{defcdeg}
{\rm Let $R$ be a Cohen-Macaulay local ring  of dimension $d \geq 1$ that has a canonical ideal $\C$. The {\em canonical degree of $R$} is the integer
  \[ \cdeg(R)= \sum_{\tiny \h(\p)=1} \cdeg(R_{\p}) \deg(R/\p) = \sum_{\tiny \h(\p)=1} [\e_{0}(\C_{\p}) - \l((R/\C)_{\p})] \deg(R/\p).\]
}\end{Definition}

We show in \cite[Theorem 2.2]{blue1} that $\cdeg(R)$  is a well-defined finite sum independent of the canonical ideal $\C$. The definition yields a nice formula in terms of multiplicity when $\C$ is equimultiple, and several results in dimension one extend to higher dimension when $\C$ is equimultiple.

\begin{Remark}{\rm If the canonical ideal $\C$ is equimultiple with a minimal reduction $(a)$, then by the associativity formula \cite[Corollary 4.7.8]{BH} we have
\[  \cdeg(R) = \deg(\C/(a)) = \e_0(\m, \C/(a)). \]
}
\end{Remark}

In general, the vanishing of 
$\cdeg(R)$ characterizes Gorensteinness in codimension one.

\begin{Remark}{\rm 
$\cdeg(R)\geq 0$ and vanishes if and only if $R$ is Gorenstein in codimension $1$.}
\end{Remark}

\begin{Remark}{\rm If $R$ is Gorenstein, it is clear that $\cdeg(R)=0$. However, the converse does not hold in general. For example, if $R$ is a normal domain we have that $\cdeg(R)=0$, since $R_{\p}$ is Gorenstein for every prime $\p$ of $R$ with $ \h(\p)\leq 1$. Therefore if $R$ is a non-Gorenstein normal domain then $\cdeg(R)=0$}.
\end{Remark}

Generalizing the one-dimensional case, we observe that when $\C$ is equimultiple the vanishing of $\cdeg(R)$ is equivalent to $R$ being Gorenstein. We denote the type of $R$ by $r(R)$.

\begin{Corollary}\label{cdegr1}\cite[Corollary 2.5]{blue1}
Suppose that the canonical ideal of $R$ is equimultiple. Then we have the following.
\begin{enumerate}[{\rm (1)}]
\item $\cdeg(R) \geq r(R)-1$.
\item $\cdeg(R) =0$ if and only if $R$ is Gorenstein.
\end{enumerate}
\end{Corollary}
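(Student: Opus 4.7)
The key tool for both parts is the remark immediately preceding the corollary: when $\C$ is equimultiple with minimal reduction $(a)$, we have $\cdeg(R) = \e_{0}(\m, \C/(a))$, together with the global formula $\cdeg(R) = \sum_{\h(\p) = 1} \cdeg(R_{\p}) \deg(R/\p)$ in which each local summand equals $\l(\C_{\p}/(a_{\p}))$ by Definition \ref{cdeg}. The plan is to use these formulas to reduce each part to the one-dimensional situation and then invoke an elementary length estimate.

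For part (1), I first observe that since $(a)$ is a minimal reduction, the element $a$ does not lie in $\m\C$ and can therefore be extended to a minimal generating set of $\C$; consequently $\mu(\C/(a)) = \mu(\C) - 1 = r(R) - 1$. In the one-dimensional case, $\C/(a)$ has finite length and the inequality ``length $\geq$ number of generators'' over a local ring gives $\cdeg(R) = \l(\C/(a)) \geq r(R) - 1$. For $d \geq 2$, I would localize at each prime $\p$ minimal over $\C$ (all of height one by equimultiplicity, since $\h(\C) = \ell(\C) = 1$), apply the one-dimensional bound to obtain $\cdeg(R_{\p}) \geq r(R_{\p}) - 1$, and then aggregate via the associativity formula. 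The technical obstacle here is matching the weighted sum of local types with the global invariant $r(R)$; this is where the equimultiplicity should be used decisively, most naturally by working directly with $\C/(a)$ as an ideal of the $(d-1)$-dimensional Cohen--Macaulay ring $R/(a)$ and controlling $\e_{0}(\m, \C/(a))$ via $\mu(\C/(a))$.

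For part (2), the forward implication is immediate: if $R$ is Gorenstein then $\C$ is principal, each $\cdeg(R_{\p}) = 0$, and hence $\cdeg(R) = 0$. For the converse, assume $\cdeg(R) = 0$. Since every summand in the global formula is nonnegative, $\cdeg(R_{\p}) = 0$ for every height-one prime $\p$. By the one-dimensional characterization of Gorensteinness via the canonical degree, each such $R_{\p}$ is Gorenstein and $\C_{\p}$ is principal; because $(a_{\p})$ is a reduction of the principal ideal $\C_{\p}$, in fact $\C_{\p} = (a_{\p})$, so $(\C/(a))_{\p} = 0$. Since this vanishing holds at every prime minimal over $\C$, the module $\C/(a)$ is supported in codimension at least two. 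But equimultiplicity forces either $\C/(a) = 0$ or $\dim(\C/(a)) = d - 1$, so we must conclude $\C = (a)$, i.e., $\C$ is principal and $R$ is Gorenstein.

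The main obstacle is the reduction step in (1), where one must connect the localized lower bounds, weighted by the multiplicities $\deg(R/\p)$, to the single global quantity $r(R) - 1$; part (2), by contrast, only requires that the individual summands vanish, together with the dimension bookkeeping provided by equimultiplicity.
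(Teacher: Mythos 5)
The genuine gap is in part (1) for $d\geq 2$, which you leave unfinished. The localize-and-aggregate route cannot be completed as stated: localizing only lowers the type, so from $\cdeg(R_{\p})\geq r(R_{\p})-1$ you only get $\cdeg(R)\geq \sum_{\h(\p)=1}(r(R_{\p})-1)\deg(R/\p)$, and there is no reason this dominates $r(R)-1$ (all local types can drop to $1$ while $r(R)>1$). You then point to the correct global statement --- bounding $\e_{0}(\m,\C/(a))$ below by $\nu(\C/(a))=r(R)-1$ --- but only as something that ``should'' work; that single inequality is the paper's entire proof, namely $\cdeg(R)=\e_{0}(\m,\C/(a))\geq \nu(\C/(a))=\nu(\C)-1=r(R)-1$, and it does require justification, because for an arbitrary module the multiplicity need not bound the number of generators (lower-dimensional components contribute generators but not multiplicity). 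What makes it legitimate here is that $\C/(a)$ is either zero or Cohen--Macaulay of dimension $d-1$: in the exact sequence $0\rar \C/(a)\rar R/(a)\rar R/\C\rar 0$ the element $a$ is a nonzerodivisor and $R/\C$ is Cohen--Macaulay of dimension $d-1$ because $\C$ is a canonical ideal, so a depth count gives $\depth(\C/(a))\geq d-1$. That is the missing idea; your dimension-one case and the identification $\nu(\C/(a))=r(R)-1$ are fine.

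The same unproved fact also undercuts your converse in (2): the assertion that ``equimultiplicity forces either $\C/(a)=0$ or $\dim(\C/(a))=d-1$'' is exactly the depth statement above, and it comes from the canonical-ideal structure of $\C$, not from equimultiplicity by itself, so as written it is a second gap. Note also that the paper does not need any of this for (2): once (1) is available, $\cdeg(R)=0$ forces $r(R)=1$, and a Cohen--Macaulay local ring of type one with a canonical module is Gorenstein, so the passage through height-one primes and codimension bookkeeping is unnecessary (your forward direction is fine).
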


\begin{proof}
Let $(a)$ be a minimal reduction of the canonical ideal $\C$. Then
\[ \cdeg(R) = \e_0(\m, \C/(a))\geq \nu(\C/(a))=r(R)-1.\]
If $\cdeg(R)=0$ then $r(R)=1$, which proves that $R$ is Gorenstein.
\end{proof}

In light of the above corollary, we are interested in examining extremal values of the canonical degree. We see that the minimal value for the canonical degree is assumed on a new class of Cohen-Macaulay rings, called almost Gorenstein rings, introduced
in \cite{BF97}. A rich theory has been developed in dimension one by Goto, Matsuoka and Phuong \cite{GMP11}, and in higher dimension by Goto, Takahashi and Taniguchi \cite{GTT15}. We recall the definition.
\medskip

\begin{Definition} \cite[Definition 3.3]{GTT15} {\rm A Cohen-Macaulay local ring $R$ with a canonical module $\omega$  is said to be an {\em almost Gorenstein} local ring (AGL for short) if there exists an exact sequence of $R$-modules
${\ds 0 \rightarrow R \rightarrow \omega\rightarrow X \rightarrow  0}$
such that $\nu(X)=\e_0(X)$, where $\nu(X)$ is the minimal number of generators of $X$. In particular, if $R$ is Gorenstein, then $X=0$, so $R$ is almost Gorenstein. }
\end{Definition}

The following result characterizes AGL rings in terms of minimal canonical degree. We include the proof for completeness.

\begin{Proposition}\label{1dimalmostg}{\rm \cite[Proposition 3.3]{blue1}}
Let $(R,\m)$ be a $1$-dimensional Cohen-Macaulay local ring with a canonical ideal $\C$.
 Then $\cdeg(R)= r(R)-1$ if and only if $R$ is an almost Gorenstein ring.
\end{Proposition}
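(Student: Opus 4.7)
The plan is to leverage two facts already available in the text: the identity $\cdeg(R) = \l(\C/(a))$ for any minimal reduction $(a)$ of $\C$ (Definition \ref{cdeg}), together with the lower bound $\cdeg(R) \geq r(R)-1$ supplied by Corollary \ref{cdegr1}, which applies here because in dimension one the canonical ideal $\C$ is $\m$-primary and hence trivially equimultiple. Both implications will reduce to the observation that, in dimension one, the AGL condition $\nu(X) = \e_0(X)$ on a finite-length cokernel $X = \C/(b)$ is equivalent to $\m X = 0$, that is, to $X$ being a $k$-vector space.

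For the direction $(\Rightarrow)$, I would fix a minimal reduction $(a)$ of $\C$. A standard Nakayama argument forces $a \notin \m\C$: otherwise the reduction equality $(a)\C^n = \C^{n+1}$ would give $\C^{n+1} \subseteq \m \C^{n+1}$, forcing $\C^{n+1} = 0$. Thus $a$ extends to a minimal generating set of $\C$ and $\nu(\C/(a)) = r(R)-1$. The hypothesis $\cdeg(R) = r(R)-1$ then yields $\l(\C/(a)) = \nu(\C/(a))$, so $\m \cdot \C/(a) = 0$, and the exact sequence
\[0 \to R \xrightarrow{\,\cdot a\,} \C \to \C/(a) \to 0\]
exhibits $R$ as almost Gorenstein with cokernel a $k$-vector space of length $r(R)-1$.

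For the converse, an AGL presentation provides a non-zerodivisor $b \in \C$ with $\m\C \subseteq (b)$. The main obstacle is that $b$ need not lie in $\C \setminus \m\C$, so $(b)$ is not guaranteed to be a minimal reduction of $\C$, and the formula $\cdeg(R) = \l(\C/(a))$ cannot be applied directly to $(b)$. I would circumvent this by invoking the length identity
\[\l(\C/(b)) - \cdeg(R) = \e_0((b)) - \e_0(\C) \geq 0,\]
valid for any non-zerodivisor $b \in \C$ (since $\l(R/(b)) = \e_0((b))$ in a one-dimensional Cohen-Macaulay local ring, while $\cdeg(R) = \e_0(\C) - \l(R/\C)$ by Definition \ref{cdeg}), with equality if and only if $(b)$ is a reduction of $\C$. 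If $b \notin \m\C$, then $\nu(\C/(b)) = r(R)-1$ combined with $\m \cdot \C/(b) = 0$ gives $\l(\C/(b)) = r(R)-1$, hence $\cdeg(R) \leq r(R)-1$ and the lower bound forces equality. If instead $b \in \m\C$, then $bR \subseteq \m\C \subseteq (b)$ forces $(b) = \m\C$, and Nakayama's lemma precludes $\m\C$ from being a reduction of $\C$; the displayed inequality is then strict, and combined with $\l(\C/(b)) = \l(\C/\m\C) = r(R)$ this yields $\cdeg(R) < r(R)$, hence $\cdeg(R) = r(R)-1$.
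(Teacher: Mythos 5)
Your proof is correct, and while your forward direction is essentially the paper's (both take a minimal reduction $(a)$ and use $\l(\C/(a))=\nu(\C/(a))$ to force $\m\cdot\C/(a)=0$, your explicit check that $a\notin\m\C$ being a harmless addition), your converse follows a genuinely different route. The paper proves that the map $R\to\C$, $1\mapsto a$, coming from the AGL sequence automatically lands on a minimal reduction: from $\m a\subseteq\m\C\subseteq(a)$ and $\l((a)/\m a)=1$ one gets $\m\C=\m a$ unless $\m\C=(a)$, which would make $\m$ invertible and $R$ a discrete valuation ring; once $(a)$ is a minimal reduction, $\cdeg(R)=\l(\C/(a))=\e_0(X)=\nu(X)=r(R)-1$ is read off exactly. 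You avoid proving reduction-ness altogether, instead comparing multiplicities via $\l(\C/(b))-\cdeg(R)=\e_0((b))-\e_0(\C)\geq 0$ and closing the gap with the lower bound $\cdeg(R)\geq r(R)-1$ of Corollary \ref{cdegr1}, split into the cases $b\notin\m\C$ and $b\in\m\C$. The one step that needs a flag is the strictness in the second case: the direction ``equal multiplicities implies $(b)$ is a reduction of $\C$'' is Rees' multiplicity theorem, which is legitimate here because a one-dimensional Cohen--Macaulay local ring is formally equidimensional, but it is heavier machinery than anything the paper uses and should be cited; more elementary substitutes are available, namely $\e_0(\m\C)=\e_0(\m)+\e_0(\C)>\e_0(\C)$ in dimension one, or the observation (implicit in the paper's proof) that $b\in\m\C$ forces $(b)=\m\C$, hence $\m\cdot b^{-1}\C=R$, so $\m$ is invertible and $R$ is a DVR, i.e.\ this case is vacuous for a non-Gorenstein ring. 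What each approach buys: yours trades the paper's somewhat delicate verification that the AGL embedding gives a minimal reduction for the externally supplied inequality of Corollary \ref{cdegr1} plus (in one corner case) Rees' theorem, whereas the paper's argument is self-contained and yields the extra structural information that the AGL sequence can always be realized with a minimal reduction of $\C$. One last small point: in the converse you take for granted that $X=\C/(b)$ has finite length (so that $\e_0(X)=\l(X)$ and $\nu(X)=\e_0(X)$ gives $\m X=0$); in dimension one this is automatic since $b$ is a unit at every minimal prime, which is what the paper's appeal to \cite[Lemma 3.1]{GTT15} covers, but it deserves a sentence.
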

\begin{proof} We may assume that $R$ is not a Gorenstein ring.  Suppose that $\cdeg(R)= r(R)-1$. Let $(a)$ be a minimal reduction of $\C$. Consider the exact sequence of $R$-modules
\[ 0 \to R \overset{\varphi}{\to} \C \to X \to 0, \;\; \mbox{where} \;\; \varphi (1) = a. \] Then $\nu(X)=r(R) -1=\cdeg (R)=\e_0(X)$. Thus, $R$ is an almost Gorenstein ring.

\medskip

\noindent For the converse, assume that $R$ is almost Gorenstein. There exists an exact sequence of $R$-modules
\[ 0 \to R \overset{\varphi}{\to} \C \to X \to 0 \;\; \mbox{such that} \;\; \nu(X)=\e_0(X).\]  Since $\dim(X)=0$ by
\cite[Lemma 3.1]{GTT15}, we have $\lambda(X)=\e_0(X)=\nu(X)=\lambda(X/\m X)$, so that $\m X=(0)$.
Let $a = \varphi (1) \in \C$ and set $Q = (a)$. Then $\m Q\subseteq \m \C \subseteq Q$. Therefore, since $R$ is not a discrete valuation ring and $\l(Q/\m Q) = 1$, we get $\m \C = \m  Q$, so that $Q$ is a minimal reduction of $\C$.
It follows that $\cdeg (R) = \e_0(X) = \nu(X) =r(R) - 1$.
\end{proof}

\begin{Remark}\label{cdeg1}{\rm Assume that $\C$ is equimultiple. The proof of Proposition~\ref{1dimalmostg} shows that if $\cdeg(R)= r(R)-1$, then $R$ is an almost Gorenstein ring \cite[Proposition 3.3]{blue1}. 
In particular, if $\cdeg(R)=1$, then by Corollary \ref{cdegr1}, $R$ is not Gorenstein and $r(R)=2$. Therefore $R$ is almost Gorenstein (not Gorenstein).}
\end{Remark}

Under a more technical assumption we can generalize Proposition~\ref{1dimalmostg} to higher dimension. We refer the reader to \cite[Theorem 3.5]{blue1}.

 \subsection*{Numerical semigroup rings}\label{subsectionnumerical} Several interesting results and examples can be found in numerical semigroup rings. We briefly recall the definitions and basic properties that we will use throughout the manuscript. Our focus will be on  
  3-generated numerical semigroup rings. For reference we use \cite[Section 8.7]{VilaBook}.

  \medskip

Given positive integers $a_{1}, \ldots, a_{n}$, let  $H = <a_1, \ldots, a_n> = \{c_{1}a_{1} + \cdots + c_{n} a_{n} \mid c_{i} \in \ZZ_{\geq 0}  \}$ be the numerical semigroup generated by $a_{1}, \ldots, a_{n}$. We assume that $\gcd(a_1, \ldots, a_n) = 1$.  Let $k$ be a field and let $k\bl t \br$ be the formal power series ring over $k$. We denote the subring of $k\bl t \br$
  generated by the monomials $t^{a_i}$ by $k\bl H \br$ and we call it the semigroup ring of $H$ over $k$. The ring $k\bl H \br$ is a one-dimensional Cohen-Macaulay local domain. 

\medskip

There are several integers playing roles in deriving properties of $k \bl H \br$. 
There is an integer $s$ such that $t^n\in k \bl H \br$ for all $n\geq s$. The smallest such $s$ is called the {\em conductor} of $H$ or of $k\bl H \br$, which we denote by $c$, and $t^c k \bl t \br$ is the largest ideal of $k \bl t \br$ contained in $k \bl H \br$. Then  $c-1$ is called the Frobenius number of $k \bl H \br$ and expresses its multiplicity, $c-1=\deg(k \bl H \br)$. For the computation of the canonical module of $k \bl H \br$ see for instance \cite[Section 6]{CGKM}.



\subsubsection*{3-generated numerical semigroup rings}\label{sectionabc}
Let $k$ be a field, let $a, b, c$ be positive integers with $\gcd(a,b,c)=1$, $H=<a,b,c>$ and $R = k\bl t^a, t^b, t^c \br$.
Assume $R$ is not Gorenstein (i.e., $H$ is not symmetric). Then $r(R)=2$ (see \cite[Section 4]{GMP11}). 
There exists a surjective homomorphism $\Gamma: S= k \bl X, Y, Z \br \rar R$ given by $\Gamma(X)=t^{a}, \Gamma(Y)=t^{b}, \Gamma(Z)=t^{c}$. 
Let $P= \ker(\Gamma)$. Then there exists a matrix
 $$\left(\begin{matrix}
X^{a_1} & Y^{b_1} & Z^{c_1}\\
Y^{b_2}&Z^{c_2}&X^{a_2}\\
\end{matrix}\right)$$
such that $P=I_{2}(\varphi)$, the ideal generated by $2 \times 2$ minors of $\varphi$. That is, 
\[ P = ( f_{1}, f_{2}, f_{3} ) = (X^{a_{1}+a_{2}} - Y^{b_{2}}Z^{c_{1}}, \; Y^{b_{1}+b_{2}} - X^{a_{1}}Z^{c_{2}}, \; Z^{c_{1}+c_{2}} - X^{a_{2}}Y^{b_{1}} ).  \]
We call this matrix the Herzog matrix of the semigroup $H$ \cite{H1970}. The computation of the Herzog matrix is also summarized in \cite[Section 3]{HHS21}.

\medskip

We recall a way to compute the canonical ideal in this case. See also \cite[Theorem 6.25]{V2}.

\begin{Remark}{\rm Let $x, y, z$ be the images of $X, Y, Z$ in $R \simeq S/P$ respectively.
Let $L=(f_{1}, f_{2})$.  Since $f_{1}, f_{2}$ form a regular sequence, the canonical module of $R$ is
\[
\C =   \Ext^{2}_{S} (R, S)  \simeq   \Ext^{2}_{S} ( S/P, S) \simeq \Hom_{S/L}( S/P, S/L) \simeq (L: P)/ L = (L: f_{3})/L  \simeq (x^{a_{1}}, y^{b_{2}}). \] 
Similarly,  $\C \simeq (y^{b_{1}}, z^{c_{2}})\simeq (x^{a_{2}}, z^{c_{1}})$.}
\end{Remark}


 Goto, Matsuoka and Phuong proved a nice formula for the canonical degree \cite[Theorem 4.1]{GMP11}. 
 
 \begin{Theorem}\label{abc} With notation as above we have
 \begin{enumerate}[{\rm (1)}]
\item  $\cdeg(R)=a_1b_1c_1$, if $bb_2-aa_1>0${\rm ;}
\item  $\cdeg(R)=a_2b_2c_2$, if $bb_2-aa_1<0$.
 \end{enumerate}
 \end{Theorem}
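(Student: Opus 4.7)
The plan is to prove Case (1) in detail; Case (2) follows by an entirely parallel argument. Identify $R$ with its image in the normalization $k\bl t\br$, a DVR with valuation $v = v_t$. In Case (1), $v(x^{a_1}) = aa_1 < bb_2 = v(y^{b_2})$, so $x^{a_1}$ achieves the minimum valuation on $\C = (x^{a_1}, y^{b_2})$. In the $1$-dimensional Cohen--Macaulay local domain $R$ whose normalization is a DVR, a principal subideal $(g) \subseteq I$ is a minimal reduction of an $\m$-primary ideal $I$ iff $v(g) = \min_{f \in I} v(f)$; hence $(x^{a_1})$ is a minimal reduction of $\C$, and by Definition~\ref{cdeg},
\[
  \cdeg(R) \;=\; \l\bigl(\C/(x^{a_1})\bigr) \;=\; \l(R/I), \qquad I := (x^{a_1}) :_R y^{b_2},
\]
since $\C/(x^{a_1})$ is cyclic on the class of $y^{b_2}$.

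The central claim is $I = (x^{a_1}, y^{b_1}, z^{c_1})$. The inclusion $\supseteq$ is immediate from the Herzog relations: $y^{b_1} y^{b_2} = y^{b_1+b_2} = x^{a_1} z^{c_2}$ and $z^{c_1} y^{b_2} = x^{a_1+a_2} = x^{a_1} x^{a_2}$ both lie in $(x^{a_1})$. For the reverse inclusion, observe that $R = k\bl H\br$ has $k$-basis $\{t^h\}_{h \in H}$, so both ideals are $k$-spanned by monomials, and it suffices to compare supports. With $d := bb_2 - aa_1 > 0$, one verifies $t^h \in I \iff h + d \in H$, while the support of $(x^{a_1}, y^{b_1}, z^{c_1})$ is $(aa_1 + H) \cup (bb_1 + H) \cup (cc_1 + H)$. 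Given $h, h+d \in H$ with representation $h = a\alpha + b\beta + c\gamma$, $\alpha, \beta, \gamma \geq 0$, if any of $\alpha \geq a_1$, $\beta \geq b_1$, $\gamma \geq c_1$ holds then $h$ already lies in the desired support.

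The main obstacle is to rule out the ``small'' case $\alpha < a_1$, $\beta < b_1$, $\gamma < c_1$. Writing $h + d = a\alpha' + b\beta' + c\gamma'$ with $\alpha', \beta', \gamma' \geq 0$, the triple $(\alpha' - \alpha, \beta' - \beta, \gamma' - \gamma)$ is an integer solution of $aX + bY + cZ = d$ with particular solution $(-a_1, b_2, 0)$. The homogeneous solutions form the $\mathbb{Z}$-syzygy lattice of $(a, b, c)$, which is $\mathbb{Z}$-spanned by $R_1 = (a_1+a_2, -b_2, -c_1)$ and $R_2 = (-a_1, b_1+b_2, -c_2)$ (note the third Herzog syzygy satisfies $R_3 = -R_1 - R_2$, and $R_1, R_2, R_3$ exhaust the minimal generators of $P$ by \cite{H1970}). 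Writing the difference as $(-a_1, b_2, 0) + n_1 R_1 + n_2 R_2$ and examining the four sign regions of $(n_1, n_2)$, a routine finite case analysis forces a violation of non-negativity under the small constraints: $n_1 \geq 1, n_2 \geq 0$ forces $\gamma' \leq \gamma - c_1 < 0$; $n_1 \geq 1, n_2 \leq -1$ forces $\beta' \leq \beta - (b_1+b_2) < 0$; $n_1 \leq 0, n_2 \geq 1$ forces $\alpha' \leq \alpha - 2a_1 < 0$; and $n_1 \leq 0, n_2 \leq 0$ is dispatched by a short subcase check yielding $\alpha' < 0$ or $\beta' < 0$. This contradiction establishes $I = (x^{a_1}, y^{b_1}, z^{c_1})$.

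It remains to compute the colength. Each Herzog relation lies in the $S$-ideal $(X^{a_1}, Y^{b_1}, Z^{c_1})$: $f_1 = X^{a_1+a_2} - Y^{b_2}Z^{c_1} \in (X^{a_1}, Z^{c_1})$, $f_2 = Y^{b_1+b_2} - X^{a_1}Z^{c_2} \in (Y^{b_1}, X^{a_1})$, and $f_3 = Z^{c_1+c_2} - X^{a_2}Y^{b_1} \in (Z^{c_1}, Y^{b_1})$. Hence $R/(x^{a_1}, y^{b_1}, z^{c_1}) \simeq S/(X^{a_1}, Y^{b_1}, Z^{c_1})$, a monomial complete intersection of length $a_1 b_1 c_1$, giving $\cdeg(R) = a_1 b_1 c_1$. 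Case (2) is handled by taking $(y^{b_2})$ as the minimal reduction of $\C$; the analogous two-step identification gives $(y^{b_2}) :_R x^{a_1} = (y^{b_2}, x^{a_2}, z^{c_2})$, of colength $a_2 b_2 c_2$, and the proof is complete.
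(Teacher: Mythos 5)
Your argument is correct, but note that this survey does not actually prove Theorem~\ref{abc}: it is quoted from \cite[Theorem 4.1]{GMP11}, so there is no in-paper proof to compare against. Your route --- pick the minimal reduction of $\C=(x^{a_1},y^{b_2})$ by valuation in $k\bl t \br$ (a principal subideal is a reduction exactly when it attains the minimal value, since reductions share integral closure and $\overline{(g)}=gk\bl t\br\cap R$), reduce $\cdeg(R)=\l(\C/(x^{a_1}))$ to the colon $(x^{a_1}):_R y^{b_2}$, identify that colon as $(x^{a_1},y^{b_1},z^{c_1})$ by a syzygy-lattice sign analysis, and read off the colength from the complete intersection $S/(X^{a_1},Y^{b_1},Z^{c_1})$ --- is a self-contained alternative to the argument in \cite{GMP11}, which is organized instead around the fractional canonical ideal and the semigroup description of $K/R$. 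It also dovetails with the computation this paper does carry out for the bi-canonical degree: in the proof of Proposition~\ref{bideg3generated} the equality $(x^{a_1}):\C=(x^{a_1},y^{b_1},z^{c_1})$ is asserted with only the easy inclusion displayed, and your lattice argument supplies exactly the missing reverse inclusion. I checked your four sign regions, including the $n_1\le 0$, $n_2\le 0$ case (split according to whether $-n_2\le -n_1$, giving $\alpha'<0$, or $-n_2\ge -n_1+1$, giving $\beta'<0$); they all force a negative exponent as claimed.

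Two points you should make explicit. First, the statement that $R_1,R_2$ generate the full lattice $\{(X,Y,Z)\in\ZZ^3 : aX+bY+cZ=0\}$ is the one step delegated to a citation; it follows either from the general fact that the exponent vectors of any binomial generating set of a toric ideal generate its lattice, or directly from Herzog's identities $a=b_1c_1+b_2c_1+b_2c_2$, $b=a_1c_1+a_1c_2+a_2c_2$, $c=a_1b_1+a_2b_1+a_2b_2$, which say precisely that $R_1\times R_2=(a,b,c)$, i.e.\ the sublattice has index one. Second, two cosmetic corrections: the monomials $t^h$, $h\in H$, are a topological rather than a vector-space basis of $k\bl H\br$ (the support argument showing that a colon of monomial ideals is monomial is unaffected), and in case (2) the analysis is parallel but the regions must be regrouped --- for instance when $n_1\ge 1$ and $n_2=0$ the contradiction comes from $\beta'$, not $\gamma'$ --- so ``entirely parallel'' should be read as ``analogous after this regrouping,'' which does go through.
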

 
 \begin{Corollary}\label{AGmatrix} \cite[Corollary 4.2]{GMP11} The ring $k \bl t^a,t^b,t^c \br$ is an almost Gorenstein ring if and only if the Herzog matrix is either 
${\ds  \left(\begin{matrix}
X&Y &Z\\
Y^{b_2}&Z^{c_2}&X^{a_2}\\
\end{matrix}\right)}$ or ${\ds \left(\begin{matrix}
X^{a_1}&Y^{b_1} &Z^{c_1}\\
Y&Z&X\
\end{matrix}\right)}$. 

 \end{Corollary}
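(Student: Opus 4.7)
The plan is to combine Theorem \ref{abc} with Proposition \ref{1dimalmostg}. In the present setting $R = k\bl t^a, t^b, t^c\br$ is a one-dimensional Cohen-Macaulay local ring with canonical ideal, and since we are in the subsection where $R$ is assumed not Gorenstein we have $r(R)=2$. Proposition \ref{1dimalmostg} then states that $R$ is almost Gorenstein if and only if $\cdeg(R) = r(R)-1 = 1$, so the task reduces to translating the numerical condition $\cdeg(R)=1$ into a constraint on the Herzog matrix.

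First I would invoke Theorem \ref{abc}: depending on the sign of $bb_2 - aa_1$, the canonical degree equals either $a_1 b_1 c_1$ or $a_2 b_2 c_2$. Since each of $a_i, b_i, c_i$ is a positive integer, a product of three of them equals $1$ precisely when every factor equals $1$. Consequently, $\cdeg(R)=1$ forces either $a_1 = b_1 = c_1 = 1$ (in the first case) or $a_2 = b_2 = c_2 = 1$ (in the second case), and the two resulting shapes of the Herzog matrix are exactly the two displayed in the statement.

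The converse is just reading Theorem \ref{abc} the other way: if the Herzog matrix has the first displayed form then $a_1 b_1 c_1 = 1$ and the theorem gives $\cdeg(R) = 1$, while if it has the second form then $a_2 b_2 c_2 = 1$ and again $\cdeg(R) = 1$. In either case $\cdeg(R) = 1 = r(R)-1$, so Proposition \ref{1dimalmostg} yields that $R$ is almost Gorenstein.

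There is no genuine obstacle here: the real content sits in Theorem \ref{abc} and Proposition \ref{1dimalmostg}, both of which we are free to cite. The only mildly delicate bookkeeping is to match the two displayed matrix shapes with the two cases of Theorem \ref{abc}; this is immediate from the symmetry that swapping the rows of the Herzog matrix (together with the relabeling $a_1 \leftrightarrow a_2$, $b_1 \leftrightarrow b_2$, $c_1 \leftrightarrow c_2$) reverses the sign of $bb_2 - aa_1$, so the two matrix forms sit naturally in the two cases of the theorem and no information is lost.
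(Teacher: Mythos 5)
Your strategy (Proposition \ref{1dimalmostg} plus $r(R)=2$ reduces the claim to $\cdeg(R)=1$, then Theorem \ref{abc}) is the natural one, and the forward direction is fine; note the paper itself only cites \cite{GMP11} for this corollary, so the issue is purely one of correctness. The gap is in your converse. Theorem \ref{abc} gives $\cdeg(R)=a_1b_1c_1$ only when $bb_2-aa_1>0$ and $\cdeg(R)=a_2b_2c_2$ only when $bb_2-aa_1<0$, so from the first matrix shape ($a_1=b_1=c_1=1$) you may conclude $\cdeg(R)=1$ only after showing $bb_2-aa_1>0$, and the second shape likewise needs $bb_2-aa_1<0$. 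Your symmetry claim does not deliver this: swapping the rows and exchanging the subscripts $1\leftrightarrow 2$ is not a symmetry of the presentation (for instance it replaces the minor $Y^{b_1+b_2}-X^{a_1}Z^{c_2}$ by $Y^{b_1+b_2}-X^{a_2}Z^{c_1}$, so in general it presents a different ring); the genuine row-exchange symmetry also permutes the columns, hence the generators $a,b,c$, and even then it does not tell you which sign case a given matrix shape falls into. Without that matching, the first shape together with $bb_2-aa_1<0$ would only give $\cdeg(R)=a_2b_2c_2$, which is not visibly $1$.

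The missing step can be supplied from the numerical identities encoded by the minors: $a(a_1+a_2)=bb_2+cc_1$, $b(b_1+b_2)=aa_1+cc_2$, $c(c_1+c_2)=aa_2+bb_1$. If $a_1=b_1=c_1=1$ and $bb_2<aa_1=a$, the second identity gives $b>cc_2\geq c$ and the first gives $aa_2<c$, hence $a\leq aa_2<c<b\leq bb_2<a$, a contradiction; and $bb_2=aa_1$ is impossible, since then $b=cc_2\in\langle c\rangle$, contradicting minimal generation of $H$. So the first shape forces $bb_2-aa_1>0$ and thus $\cdeg(R)=a_1b_1c_1=1$; a symmetric computation shows the second shape forces $bb_2-aa_1<0$, whence $\cdeg(R)=a_2b_2c_2=1$. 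With this short argument inserted, your proof is complete and follows the same route as \cite{GMP11}.
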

 
 \begin{Example} {\rm Let $R=k \bl t^3,t^4,t^5 \br $. The Herzog matrix is ${\ds \left(\begin{matrix}
X&Y &Z\\
Y&Z&X^2\
\end{matrix}\right)}$, so $\cdeg(R)=1$. $R$ is almost Gorenstein. }
 \end{Example}
 
  \begin{Example}\label{579} {\rm Let $R=k \bl t^5,t^7,t^9 \br$. The Herzog matrix is $\left(\begin{matrix}
X&Y &Z^2\\
Y&Z&X^4\
\end{matrix}\right)$, so $\cdeg(R)=2$. $R$ is not almost Gorenstein. }
 \end{Example}

 \section{Canonical index and Sally module}\label{sallysubsection}
In view of Corollary \ref{cdegr1} and Proposition \ref{1dimalmostg}, the next case to consider is $\cdeg(R)= r(R)$. It is open in general. Some examples are provided in \cite[Example 4.12]{blue1}. In the smallest case, $\cdeg(R)= r(R)=2$, we obtained nice relationships among several invariants of $R$ in \cite[Section 4]{blue1}. Inspired by \cite{blue1} and \cite{sally}, Chau, Goto, Kumashiro and Matsuoka defined the notion of 2-AGL rings as a generalization of AGL rings of dimension one \cite{CGKM}. The theory of these rings has then been developed by Goto, Isobe, and Taniguchi \cite{GIT}.  Both \cite{blue1}  and  \cite{CGKM} use the Sally module, introduced by Vasconcelos in \cite{SallyMOD}. In order to discuss these results, we recall the necessary background. 
 
 \medskip
 
 We showed in \cite[Proposition 4.1]{blue1} that the reduction number of the canonical ideal of $R$ is an invariant of the ring. Therefore we have the following definition.

  \begin{Definition}{\rm \cite[Definition 4.2]{blue1}
Let $R$ be a Cohen-Macaulay local ring of dimension $d \geq 1$ with a canonical ideal $\C$. The {\em canonical index} of $R$ is the reduction number of the canonical ideal $\C$ of $R$ and is denoted by $\rho(R)$.
}\end{Definition}

\begin{Remark}{\rm Assume $d=1$ and that $R$ is not Gorenstein. Let $(a)$ be a minimal reduction of $\C$. Notice that we can not have $\rho(R)=1$, otherwise $\C^2 = a\C$. Then  $\C a^{-1} \subset \Hom(\C , \C) = R$ so that $\C = (a)$, a contradiction.
Furthermore, if $R$ is almost Gorenstein (i.e., $\cdeg(R)=r(R) -1$), then $\rho(R)=2$, by \cite[Theorem 3.16]{GMP11}. We note that the converse is not true, as shown in the following example. }
\end{Remark}
 
\begin{Example}\label{squares}{\rm \cite[Example 4.9]{blue1}  Let $S=k \bl X,Y,Z \br$, let $I=(X^4-Y^{2}Z^{2}, Y^{4}-X^{2}Z^{2}, Z^{4}-X^{2}Y^{2})$ and $R=S/I$. Let $x, y, z$ be the images of $X, Y, Z$ in $R$. Then $\C=(x^{2}, z^{2})$ is a canonical ideal of $R$ with a minimal reduction $(x^{2})$. We have that $\rho(R)=2$, $\e_1(\C)=16$ and $\cdeg(R)=8$.}
\end{Example}


 We briefly recall the Sally module associated to
the canonical ideal $\C$ in rings of dimension $1$. Let $Q=(a)$
be a minimal reduction of  $\C$ and consider the exact sequence of finitely generated $R[QT]$-modules
\[ 0 \rar \C R[QT] \rar \C R[\C T] \rar S_Q(\C) \rar 0. \]
Then the Sally module  $S=S_Q(\C) = \bigoplus_{n\geq 1} \C^{n+1}/\C Q^{n}$ of $\C$ relative to $Q$ is Cohen-Macaulay and, by
 \cite[Theorem 2.1]{red}, we have
 \begin{equation}\label{eq1}
 \e_1(\C) = \cdeg(R) + \sum_{j=1}^{\rho(R)-1} \lambda(\C^{j+1}/a\C^j) = \sum_{j=0}^{\rho(R) -1} \lambda(\C^{j+1}/a\C^j). 
 \end{equation}

The following property of Cohen-Macaulay rings of type $2$ is a useful calculation that we use to characterize rings with minimal canonical index $2$.

\begin{Proposition}\label{C2}\cite[Proposition 4.5]{blue1}
Let $R$ be a $1$-dimensional Cohen-Macaulay local ring with a canonical ideal $\C$.  Let $(a)$ be a minimal reduction of $\C$. If $\nu(\C)=2$, then $\l(\C^{2}/a \C)=\l(\C/(a))$.
\end{Proposition}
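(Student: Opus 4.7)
The plan is to exhibit an explicit isomorphism $\varphi \colon \C/(a) \xrightarrow{\,\sim\,} \C^{2}/a\C$ given by multiplication by a suitable second generator of $\C$. Because $\nu(\C)=2$ and $a \in \C$, I would begin by writing $\C=(a,b)$ for some $b\in \C$, and define $\varphi(\bar x) = \overline{bx}$. This is well-defined because for $x = ra \in (a)$ one has $bx = r(ab) \in a\C$ (using $b \in \C$). Surjectivity is automatic: since $\C^{2}=(a^{2},ab,b^{2})=a\C+(b^{2})$, the module $\C^{2}/a\C$ is cyclic on the class of $b^{2}=\varphi(\bar b)$.

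The substantive step, and the one I expect to be the main obstacle, is injectivity. For this I would isolate the following lemma, which is where the canonical hypothesis enters: $(a\C:\C)=(a)$. The inclusion $(a)\subseteq (a\C:\C)$ is automatic. Conversely, if $r\in R$ satisfies $r\C\subseteq a\C$, then in the total ring of fractions $\mathrm{Quot}(R)$ one has $(r/a)\C \subseteq \C$, so $r/a \in (\C:\C)=\End_{R}(\C)$. Since $\C$ is a canonical ideal of the one-dimensional Cohen-Macaulay ring $R$, the natural map $R\to \End_{R}(\C)$ is an isomorphism, hence $r/a \in R$ and $r\in (a)$.

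Granted this lemma, injectivity is immediate. If $\varphi(\bar x)=0$, decompose $x = ra + sb$ with $r,s\in R$; then $bx = rab + sb^{2} \in a\C$ forces $sb^{2}\in a\C$. I would then verify $sb\cdot \C \subseteq a\C$ on the two generators of $\C$: one has $sb\cdot a = s(ab) \in a\C$ trivially, and $sb\cdot b = sb^{2}\in a\C$ by what was just shown. By the lemma, $sb\in (a\C:\C)=(a)$, so $x = ra+sb \in (a)$, as required.

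Having established that $\varphi$ is an $R$-module isomorphism, the conclusion $\l(\C^{2}/a\C)=\l(\C/(a))$ follows at once. The crux of the argument is thus the canonical-module identity $\End_{R}(\C)=R$, repackaged as the colon computation $(a\C:\C)=(a)$; the rest of the argument is a direct consequence of the assumption $\nu(\C)=2$, which is what produces a cyclic target $\C^{2}/a\C$ and allows multiplication by the complementary generator to carry $\C/(a)$ isomorphically onto it.
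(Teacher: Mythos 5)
Your proposal is correct and is essentially the argument behind the cited result: since $(a)$ is a reduction one has $a\notin\m\C$, so $\C=(a,b)$, and multiplication by $b$ gives the isomorphism $\C/(a)\simeq\C^{2}/a\C$, with injectivity resting on the colon computation $(a\C:\C)=(a)$, which is exactly the canonical-ideal property $\Hom_R(\C,\C)=R$. The survey itself only cites \cite[Proposition 4.5]{blue1} without reproducing the proof, and your route coincides with that argument (there phrased equivalently via the fractional ideal $K=a^{-1}\C$ and $K:K=R$), so there is nothing to add beyond the small justification that $a$ is a minimal generator.
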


From Equation (\ref{eq1}) and Proposition \ref{C2} we obtain the following (see also Example \ref{squares}).

\begin{Theorem}\label{SallyofC} \cite[Theorem 4.6]{blue1}  Let $R$ be a $1$-dimensional Cohen-Macaulay local ring with a canonical ideal $\C$. Suppose that the type of $R$ is $2$.
Then $\rho(R)=2$ if and only if ${\ds \e_{1}(\C) = 2 \, \cdeg(R)}$.
\end{Theorem}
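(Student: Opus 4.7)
The plan is to derive both directions directly from Equation~(\ref{eq1}) and Proposition~\ref{C2}. Let $(a)$ be a minimal reduction of $\C$. Since the type of $R$ is $2$, we have $\nu(\C)=2$, so Proposition~\ref{C2} yields the key identity $\lambda(\C^{2}/a\C)=\lambda(\C/(a))=\cdeg(R)$.

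First I would handle the forward direction. Assume $\rho(R)=2$. Then the sum in Equation~(\ref{eq1}) collapses to the single term $\lambda(\C^{2}/a\C)$, which by the identity above equals $\cdeg(R)$. Substituting gives $\e_{1}(\C)=\cdeg(R)+\cdeg(R)=2\,\cdeg(R)$.

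For the converse, assume $\e_{1}(\C)=2\,\cdeg(R)$. Rewriting Equation~(\ref{eq1}) yields
\[
\sum_{j=1}^{\rho(R)-1}\lambda(\C^{j+1}/a\C^{j})=\cdeg(R),
\]
and the $j=1$ summand already accounts for $\cdeg(R)$ by Proposition~\ref{C2}. Hence every summand with $j\geq 2$ must vanish. To convert this into a bound on $\rho(R)$, I would invoke the elementary fact that $\lambda(\C^{j+1}/a\C^{j})>0$ whenever $j<\rho(R)$: if one had $\C^{j+1}=a\C^{j}$, multiplying by $\C$ repeatedly would force $\C^{k+1}=a\C^{k}$ for all $k\geq j$, contradicting the minimality of $\rho(R)$ unless $j\geq\rho(R)$. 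Combined with the earlier observation that $\rho(R)\neq 1$ when $R$ is not Gorenstein (which holds here since $r(R)=2$), this forces $\rho(R)=2$.

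The hard part is rather mild: essentially all that is needed beyond Equation~(\ref{eq1}) and Proposition~\ref{C2} is the positivity of the intermediate lengths $\lambda(\C^{j+1}/a\C^{j})$ for $j<\rho(R)$. The hypothesis $r(R)=2$ plays a double role, first enabling Proposition~\ref{C2} to identify $\lambda(\C^{2}/a\C)$ with $\cdeg(R)$, and second ruling out $\rho(R)=1$, which explains why the theorem is tailored precisely to type $2$.
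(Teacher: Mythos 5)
Your proposal is correct and follows exactly the route the paper indicates: the paper derives Theorem \ref{SallyofC} from Equation (\ref{eq1}) together with Proposition \ref{C2}, using $\nu(\C)=r(R)=2$ to identify $\lambda(\C^{2}/a\C)$ with $\cdeg(R)$, the positivity of $\lambda(\C^{j+1}/a\C^{j})$ for $j<\rho(R)$, and the fact that $\rho(R)\neq 1$ since $R$ is not Gorenstein. There is nothing to add; your argument fills in the same details the paper leaves implicit.
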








\begin{Remark} \label{Sallymult}{\rm Let $R$ be a $1$-dimensional Cohen-Macaulay local ring with a canonical ideal $\C$. Then the multiplicity of the Sally module $s_0(S)=\e_1(\C)-\e_0(\C)+\lambda(R/\C)=\e_1(\C)-\cdeg(R)$ is an invariant of $R$, by \cite[Corollary 2.8]{GMP11} and \cite[Proposition 2.1]{blue1}.} \end{Remark}

\begin{Remark}\label{relationships}{\rm Let $R$ be a $1$-dimensional Cohen-Macaulay local ring with a canonical ideal $\C$. Suppose that the type of $R$ is $2$. Then by Equation (\ref{eq1}), Proposition \ref{C2} and Remark \ref{Sallymult} we have that $s_0(S)= \cdeg(R)$, if $\rho(R)=2$, and $s_0(S)= \cdeg(R) + \sum_{j=2}^{\rho(R)-1} \lambda(\C^{j+1}/a\C^j)$ if $\rho(R)\geq 3$.}
\end{Remark}

It is shown in \cite[Theorem 3.16]{GMP11}
 that $R$ is an almost Gorenstein (but not Gorenstein) ring if and only if $s_0(S)=1$. Therefore it is natural to generalize as follows.

\begin{Definition}{\rm \cite[Definition 1.3]{CGKM} Let $R$ be a $1$-dimensional Cohen-Macaulay local ring with a canonical ideal $\C$. Then $R$ is a {\em $2$-almost Gorenstein local ring} ($2$-AGL for short), if $s_0(S)=2$; that is, $\e_1(\C)=\e_0(\C)-\lambda(R/\C)+2=\cdeg(R)+2$.}
\end{Definition}

\begin{Remark}\label{Gotonotation}{\rm We recall the notation of \cite[Section 2]{CGKM}, and we refer the reader there for details. Let $R$ be a Cohen-Macaulay local ring of dimension one with canonical ideal $\C$. Let $(a)$ be a minimal reduction of $\C$. Set ${\ds K=\dfrac{\C}{a}= \left\{\dfrac{x}{a} \; \vline \;  x\in \C \right\} }$  in the total ring of fractions of $R$. 
Then $K/R \simeq \C/(a)$.
Therefore $\l(K/R)$ that appears in \cite{CGKM} is $\cdeg(R)$. }
\end{Remark}

Among many interesting results in \cite{CGKM}, Proposition 3.8 characterizes 2-AGL rings of type two. They are rings with minimal canonical degree and minimal canonical index. 
We give here an alternative short proof using our set-up.


\begin{Proposition}\label{2AGLtype2}\cite[Proposition 3.8]{CGKM} Let $R$ be a $1$-dimensional Cohen-Macaulay local ring with a canonical ideal. Suppose that the type of $R$ is $2$. The following conditions are equivalent{\rm \,:}
\begin{enumerate}[{\rm (i)}]
\item $R$ is a $2$-AGL ring{\rm\,;}
\item $\cdeg(R)=2$ and $\rho(R)=2$. 
\end{enumerate}
\end{Proposition}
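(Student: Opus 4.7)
The plan is to exploit the explicit formula for the Sally multiplicity $s_0(S)$ recalled in Remark \ref{relationships} together with the characterization of canonical index two given by Theorem \ref{SallyofC}, thereby reducing the equivalence to a short length calculation.

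For the implication (i) $\Rightarrow$ (ii), suppose $R$ is $2$-AGL, so $s_0(S) = 2$. First I would argue that $\cdeg(R) \geq 2$: by Corollary \ref{cdegr1} we have $\cdeg(R) \geq r(R) - 1 = 1$, and the extreme value $\cdeg(R) = 1$ would force $R$ to be AGL by Remark \ref{cdeg1}, giving $s_0(S) = 1$ by the Goto--Matsuoka--Phuong result cited just before the definition of $2$-AGL ring, contradicting $s_0(S) = 2$. Next I would rule out $\rho(R) \geq 3$: in that case $\C^3 \neq a\C^2$ for a minimal reduction $(a)$ of $\C$, so $\l(\C^3/a\C^2) \geq 1$, and Remark \ref{relationships} yields
\[ s_0(S) = \cdeg(R) + \sum_{j=2}^{\rho(R)-1} \l(\C^{j+1}/a\C^j) \geq 2 + 1 = 3, \]
a contradiction. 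Hence $\rho(R) = 2$, and the same remark collapses to $\cdeg(R) = s_0(S) = 2$.

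For the reverse implication (ii) $\Rightarrow$ (i), assuming $\cdeg(R) = 2$ and $\rho(R) = 2$, Theorem \ref{SallyofC} delivers $\e_1(\C) = 2\,\cdeg(R) = 4$, and then Remark \ref{Sallymult} produces
\[ s_0(S) = \e_1(\C) - \cdeg(R) = 4 - 2 = 2, \]
so that $R$ is $2$-AGL by definition.

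The hard part will be the lower bound $\cdeg(R) \geq 2$ in the forward direction, since this is where one must carefully separate the $2$-AGL class from the classical AGL class using the Sally multiplicity. Once that step is in place, the positivity of the summand $\l(\C^3/a\C^2)$ whenever $\rho(R) \geq 3$ forces the canonical index to equal $2$ almost immediately, and Theorem \ref{SallyofC} does the rest.
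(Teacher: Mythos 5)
Your proposal is correct and follows essentially the same route as the paper: rule out $\cdeg(R)\leq 1$ because $R$ cannot be almost Gorenstein when $s_0(S)=2$, then apply Remark \ref{relationships} for type-two rings to identify $s_0(S)$ with $\cdeg(R)$ when $\rho(R)=2$ and to force $s_0(S)\geq 3$ when $\rho(R)\geq 3$. Your detour in the converse through Theorem \ref{SallyofC} and Remark \ref{Sallymult} is just an unwound form of Remark \ref{relationships}, so the two arguments coincide in substance, with yours simply supplying the details the paper leaves implicit.
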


\begin{proof} 
We may assume that $\cdeg(R)\geq 2$, since $R$ is not almost Gorenstein. The conclusion follows from Remark \ref{relationships}.
\end{proof}



In \cite[Section 6]{CGKM} the authors studied 2-AGL numerical semigroup rings. In particular, for 3 generated semigroup rings they obtained the following characterization, in the spirit of Corollary \ref{AGmatrix}.

\begin{Theorem}\label{2AGLabc} {\rm \cite[Theorem 6.4]{CGKM}} The ring $k \bl t^a,t^b,t^c \br$ is $2$-AGL if and only if after a suitable permutation of $a, b, c$ the Herzog matrix is ${\ds \left(\begin{matrix}
X^2&Y &Z\\ Y^{b_2}&Z^{c_2}&X^{a_2}\\ \end{matrix}\right)}$
with $a_2\geq 2$ and $b_2, c_2 >0$.
  \end{Theorem}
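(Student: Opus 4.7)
The plan is to reduce the 2-AGL condition to the numerical conditions $\cdeg(R)=2$ and $\rho(R)=2$ via Proposition~\ref{2AGLtype2}, and then read off what each condition forces on the Herzog matrix. Since $R$ is a non-Gorenstein three-generated numerical semigroup ring, its type satisfies $r(R)=2$, so Proposition~\ref{2AGLtype2} gives the equivalence: $R$ is 2-AGL if and only if $\cdeg(R)=2$ and $\rho(R)=2$.

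For the condition $\cdeg(R)=2$, I would apply Theorem~\ref{abc}: the canonical degree equals either $a_1 b_1 c_1$ or $a_2 b_2 c_2$. Hence $\cdeg(R)=2$ forces one of the two rows of the Herzog matrix to have exponents with product $2$, i.e., to be a permutation of $(X^2, Y, Z)$. After relabeling the generators $a, b, c$ (which permutes the columns of $\varphi$) and possibly swapping the two rows, we may assume that this row is the top one, so that $a_1=2$ and $b_1=c_1=1$. The remaining side constraints $a_2 \geq 2$ and $b_2, c_2 > 0$ exclude the almost Gorenstein configuration of Corollary~\ref{AGmatrix}: if $a_2 = b_2 = c_2 = 1$, the second row is a permutation of $(X, Y, Z)$, making $R$ almost Gorenstein with $\cdeg(R) = 1$ rather than $2$.

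For the condition $\rho(R)=2$, I would compute $\C^{2}$ and compare it with $\alpha\C$ for a suitable principal minimal reduction $(\alpha)$ of $\C$. Taking $\C=(x^{2}, y^{b_2})$ (or one of the equivalent presentations $(y, z^{c_2})$ or $(x^{a_2}, z)$ arising from the other $2\times 2$ minors of $\varphi$) and choosing $\alpha$ to be a generic $k$-linear combination of its two generators, I would use the three relations $x^{2+a_2}=y^{b_2}z$, $y^{1+b_2}=x^{2}z^{c_2}$, $z^{1+c_2}=x^{a_2}y$ coming from the minors of $\varphi$ to rewrite each of the generators $x^{4}$, $x^{2} y^{b_2}$, $y^{2b_2}$ of $\C^{2}$ as $\alpha$ times an element of $\C$, establishing $\C^{2}=\alpha\C$, i.e., $\rho(R)\leq 2$. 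Since $R$ is non-Gorenstein, $\rho(R)\geq 2$, so in fact $\rho(R)=2$.

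The main obstacle is the explicit verification of $\C^{2}=\alpha\C$: the three relations interact in a way that depends on the two free parameters $b_2$ and $c_2$, and the correct choice of reduction generator $\alpha$ may depend on the sign of $bb_2 - aa_1$ in Theorem~\ref{abc} (which governs which identification of $\C$ is the most convenient for the calculation). For the reverse implication one must verify that any configuration of exponents outside the prescribed shape either violates $\cdeg(R)=2$ (directly from Theorem~\ref{abc}) or forces $\rho(R)>2$ through a non-vanishing element of $\C^{2}/\alpha\C$.
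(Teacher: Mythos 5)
The paper itself gives no proof of this statement (it is quoted from \cite[Theorem 6.4]{CGKM}), so your proposal has to stand on its own, and it contains two genuine errors. The most serious one is in your treatment of $\rho(R)=2$: the reduction number is the least $r$ with $\C^{r+1}=\alpha\C^{r}$, so $\rho(R)=2$ means $\C^{3}=\alpha\C^{2}$ (together with $\C^{2}\neq\alpha\C$), while the identity $\C^{2}=\alpha\C$ you plan to verify would say $\rho(R)\leq 1$. That identity is not only the wrong target, it is false in every case covered by the theorem: for a non-Gorenstein ring $\rho(R)=1$ is impossible (see the remark following the definition of the canonical index), and Proposition~\ref{C2} gives $\l(\C^{2}/\alpha\C)=\l(\C/(\alpha))=\cdeg(R)=2$ for these type-two rings. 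Concretely, for the $2$-AGL ring $k\bl t^3,t^7,t^8\br$ one has $\C=(x^2,y)=(t^6,t^7)$ with minimal reduction $\alpha=x^2$, and $y^{2}=t^{14}\notin(t^{12},t^{13})=\alpha\C$. So the computation you outline cannot succeed; the actual work is to decide when $\C^{3}=\alpha\C^{2}$, and that is precisely where the hypothesis $a_2\geq 2$ has to enter.

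The second error is your explanation of the side constraints. The inequalities $b_2,c_2>0$ are automatic for any Herzog matrix, and $a_2\geq 2$ does much more than exclude the almost Gorenstein row of Corollary~\ref{AGmatrix}: the ring $k\bl t^5,t^7,t^9\br$ of Examples~\ref{579} and~\ref{ex2AGL} has, after a permutation, a Herzog matrix with first row exponents $(2,1,1)$ and $a_2=1$, $b_2=4$, $c_2=1$; it satisfies $\cdeg(R)=2$ and is not almost Gorenstein, yet it is not $2$-AGL because $\rho(R)=4$. Hence in the forward direction $\cdeg(R)=2$ alone does not yield $a_2\geq 2$ --- that inequality must be extracted from $\rho(R)=2$ --- and in the converse direction it must be used to prove $\rho(R)=2$; your sketch does neither, and indeed singles out $b_2,c_2$ rather than $a_2$ as the parameters controlling the computation. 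The frame of reducing to $\cdeg(R)=2$ and $\rho(R)=2$ via Proposition~\ref{2AGLtype2} and Theorem~\ref{abc} is reasonable (though in the converse one must also check that the displayed shape lands in the case of Theorem~\ref{abc} giving $\cdeg(R)=a_1b_1c_1$), but the heart of the theorem, the equivalence of $a_2\geq 2$ with the reduction-number condition, is missing from the proposal.
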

 
 \begin{Example}\label{ex2AGL}{\rm We consider some examples of rings of type 2.
 
 \begin{enumerate}[(1)]
 \item  Three generated semigroup rings with small multiplicity that are 2-AGL are characterized in \cite[Corollary 6.7]{CGKM}. One example is $R=k \bl t^3,t^7,t^8 \br$. 
 
\item  The ring $R=k \bl t^5,t^7,t^9 \br$ of Example \ref{579} has $\cdeg(R)=2$. After a permutation the Herzog matrix is ${\ds \left(\begin{matrix}
X^2&Y &Z\\ Y^{4}&Z&X\\ \end{matrix}\right) }$. 
The ring is not 2-AGL, according to Theorem \ref{2AGLabc} or \cite[Corollary 6.7]{CGKM}. We have that $\C=(x^2,y^4)$ is a canonical ideal of $R$ with minimal reduction $(x^2)$ and $\rho(R)=4$.
 
 \item  The ring $R$ of Example \ref{squares} has $\rho(R)=2$, but it is not 2-AGL, since $\cdeg(R)=8$. 
 \end{enumerate}}
 \end{Example}

To conclude this section, we would like to propose a possible further generalization.

\begin{Definition} {\rm Let $R$ be a $1$-dimensional Cohen-Macaulay local ring with a canonical ideal $\C$. Then $R$ is a {\em $n$-almost Gorenstein local ring} ($n$-AGL for short), if $s_0(S)=n$; that is, $\e_1(\C)=\e_0(\C)-\lambda(R/\C)+n=\cdeg(R)+n$.}
\end{Definition}

 \subsection*{3-AGL rings}\label{3AGLsubsection} It would be interesting to develop a theory of $3$-AGL rings, in particular for $3$-generated semigroup rings. We begin with the following characterization of rings of type 2, in the spirit of Proposition \ref{2AGLtype2}. We note that 3-AGL does not imply that $\cdeg(R)=3$.

\begin{Proposition}\label{3AGL} Let $R$ be a $1$-dimensional Cohen-Macaulay local ring of type $2$ with a canonical ideal $\C$. Let $(a)$ be a minimal reduction of $\C$. Then $R$ is a $3$-AGL ring if and only if one of the following holds{\rm \,:}
\begin{enumerate}[{\rm (1)}]
\item $\cdeg(R)=3$ and $\rho(R)=2${\rm \,;}
\item $\cdeg(R)=2$, $\rho(R)=3$, and $\l(\C^3/a\C^2)=1$. 
\end{enumerate}
\end{Proposition}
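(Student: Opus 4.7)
The plan is to derive the statement directly from the length formula in Equation~(\ref{eq1}) for $\e_1(\C)$, combined with Proposition~\ref{C2}, which is available because the type hypothesis forces $\nu(\C)=2$. The definition of a $3$-AGL ring translates into a single linear identity, and the two named cases correspond to the only two ways this identity can be satisfied.

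Concretely, I would first rewrite
\[
\e_1(\C) \;=\; \l(\C/(a)) + \l(\C^2/a\C) + \sum_{j=2}^{\rho(R)-1}\l(\C^{j+1}/a\C^j),
\]
and apply Proposition~\ref{C2}, whose hypothesis $\nu(\C)=2$ holds since the type of $R$ equals the minimal number of generators of the canonical module. This yields $\l(\C^2/a\C)=\l(\C/(a))=\cdeg(R)$, so that
\[
\e_1(\C) \;=\; 2\,\cdeg(R) + \sum_{j=2}^{\rho(R)-1}\l(\C^{j+1}/a\C^j).
\]
The ring $R$ is $3$-AGL precisely when $\e_1(\C)=\cdeg(R)+3$, which rearranges to the crucial identity
\[
3-\cdeg(R) \;=\; \sum_{j=2}^{\rho(R)-1}\l(\C^{j+1}/a\C^j).
\]

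Next I would analyze this identity by cases according to $\rho(R)$, using two general constraints. First, since $R$ is not Gorenstein ($r(R)=2$), we have $\rho(R)\geq 2$ by the remark preceding Example~\ref{squares}. Second, $\cdeg(R)\geq r(R)-1 = 1$ by Corollary~\ref{cdegr1} (the canonical ideal is automatically equimultiple in dimension one), and $\cdeg(R)\neq 1$ when $R$ is $3$-AGL, since otherwise Proposition~\ref{1dimalmostg} would make $R$ almost Gorenstein, giving $s_0(S)=1\neq 3$. Thus $\cdeg(R)\geq 2$ in the $3$-AGL case. Also, by definition of reduction number, each summand $\l(\C^{j+1}/a\C^j)$ with $j<\rho(R)$ is at least $1$. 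Plugging in:
\begin{itemize}
\item If $\rho(R)=2$, the sum is empty and the identity forces $\cdeg(R)=3$ (case (1)).
\item If $\rho(R)=3$, the sum has one positive term, giving $\cdeg(R)\leq 2$; combined with $\cdeg(R)\geq 2$ we get $\cdeg(R)=2$ and $\l(\C^3/a\C^2)=1$ (case (2)).
\item If $\rho(R)\geq 4$, the sum has at least two terms each $\geq 1$, forcing $\cdeg(R)\leq 1$, contradicting $\cdeg(R)\geq 2$.
\end{itemize}
This proves the forward implication. The converse is a direct substitution: in each of the two listed cases, the displayed expression for $\e_1(\C)$ evaluates to $\cdeg(R)+3$, hence $R$ is $3$-AGL.

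There is no serious obstacle once the length formula and Proposition~\ref{C2} are in hand; the only subtlety is the exclusion of the possibilities $\cdeg(R)=1$ and $\rho(R)\geq 4$, which relies on combining the almost Gorenstein characterization $\cdeg(R)=r(R)-1$ with the $s_0(S)=3$ hypothesis, and on the positivity of each intermediate length in the sum for $\e_1(\C)$.
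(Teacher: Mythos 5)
Your proof is correct and takes essentially the same route as the paper: the paper reduces to $\cdeg(R)\geq 2$ via the almost Gorenstein characterization and then cites Remark \ref{relationships}, which is precisely the identity $s_0(S)=\cdeg(R)+\sum_{j=2}^{\rho(R)-1}\l(\C^{j+1}/a\C^j)$ that you rederive from Equation (\ref{eq1}) and Proposition \ref{C2}. The only difference is that you spell out the case analysis on $\rho(R)$ (and the exclusion of $\cdeg(R)=1$ and $\rho(R)\geq 4$) that the paper leaves implicit.
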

\begin{proof}
We may assume that $\cdeg(R)\geq 2$, since $R$ is not almost Gorenstein. The conclusion follows from Remark \ref{relationships}.
\end{proof}

\begin{Example}{\rm After a permutation $R=k \bl t^7,t^{12}, t^{15} \br$ has Herzog matrix ${\ds \left(\begin{matrix}
X^3&Y &Z\\
Y^{2}&Z^{2}&X^{3}\\
\end{matrix}\right)}$. 
Then $\C=(x^3,y^2)$ is a canonical ideal of $R$ with minimal reduction $(x^3)$ and $\rho(R)=2$. We have $\cdeg(R)=3$ by Theorem \ref{abc}. Therefore $R$ is $3$-AGL by Proposition \ref{3AGL}.} 
\end{Example}

\begin{Example}{\rm After a permutation $R=k \bl t^5,t^{9}, t^{11} \br$ has Herzog matrix ${\ds \left(\begin{matrix}
X^2&Y &Z\\
Y^{2}&Z^{3}&X\\
\end{matrix}\right)}$. 
Then $\C=(z,x)$ is a canonical ideal of $R$ with minimal reduction $(z)$. We have $\rho(R)=3$, $\l(\C^3/z\C^2)=1$. We also have $\cdeg(R)=2$ by Theorem \ref{abc}. Therefore $R$ is $3$-AGL by Proposition \ref{3AGL}.} 
\end{Example}

\begin{Example}{\rm The ring $R=k \bl t^5,t^{7}, t^{9} \br$ of Example \ref{ex2AGL} has $\cdeg(R)=2$, $\rho(R)=4$. Therefore $R$ is not $3$-AGL by Proposition \ref{3AGL}.} 
\end{Example}

In accordance with Theorem \ref{2AGLabc} and the above examples we ask the following.

\begin{Question} Suppose that after a suitable permutation of $a, b, c$, the ring $R=k \bl t^a,t^b,t^c \br$  has Herzog matrix  ${\ds \left(\begin{matrix}
X^3&Y &Z\\
Y^{b_2}&Z^{c_2}&X^{a_2}\\
\end{matrix}\right)}$
with $a_2\geq 3$ and $b_2, c_2 >0$.
Is $R$ a $3$-AGL ring?
\end{Question}

\begin{Question} Suppose that after a suitable permutation of $a, b, c$, the ring $R=k \bl t^a,t^b,t^c \br$  has Herzog matrix  ${\ds \left(\begin{matrix}
X^2&Y &Z\\
Y^{b_2}&Z^{c_2}&X\\
\end{matrix}\right)}$
with $b_2, c_2 >0$.
When is $R$ a $3$-AGL ring?
\end{Question}

\section{bi-canonical degree}\label{bicanonicalsection}

The canonical degree is dependent on finding minimal reductions, which in general is a particularly hard task.
We consider in \cite{bideg} a degree that seems more amenable to computation. 


 \begin{Definition}
 {\rm Let $(R,\m)$ be a $1$-dimensional Cohen-Macaulay local ring with a canonical ideal $\C$.  The {\em bi-canonical degree of $R$} is the integer  $\ddeg(R)=\lambda(\C^{**}/\C)$, where ${\C}^{**}=\Hom_R(\Hom_R(\C,R),R)$ is the bidual of $\C$.}
 \end{Definition}
 
 \begin{Remark}{\rm We note that the bi-canonical degree is an invariant of the ring. We have $\ddeg(R)=0$ if and only if $\C$ is reflexive. Therefore, by \cite[Corollary 7.29]{HK2} in $1$-dimensional Cohen-Macaulay local rings the vanishing of $\ddeg(R)$ is equivalent to $R$ being Gorenstein.}
 \end{Remark}
 
 We define $\ddeg(R)$ in full generality as follows. We recall that $\deg(\tratto)$ is the multiplicity defined by the $\m$-adic filtration.
 
 \begin{Definition}\label{bideg}{\rm \cite[Theorem 3.1]{bideg}
Let $R$ be a Cohen-Macaulay local ring  of dimension $d \geq 1$  that has a canonical ideal $\C$. Then the {\em bi-canonical degree} of $R$ is the integer
  \[ \ddeg(R)= \deg(\C^{**}/\C) =
     \sum_{\tiny \h( \p)=1} \ddeg(R_{\p}) \deg(R/\p) = \sum_{\tiny \h(\p)=1} [\lambda(R_{\p}/\C_{\p}) - \lambda(R_{\p}/\C^{**}_{\p})] \deg(R/\p).
  \]   
}\end{Definition}

We show in \cite[Theorem 3.1]{bideg} that $\ddeg(R)$  is a well-defined finite sum independent of the canonical ideal $\C$. 

\begin{Remark}\label{bidegzero}{\rm 
$\ddeg(R)\geq 0$ and vanishes if and only if $R$ is Gorenstein in codimension $1$.}
\end{Remark}

\begin{Remark}\label{trace}{\rm 
 Let $R$ be a Cohen-Macaulay local ring with a canonical ideal $\C$. 
 The {\em residue} of $R$  was introduced by Herzog, Hibi, and Stamate in \cite[Section 2]{HHS19}.
  A standard metric is ${\rm res}(R) =\deg(R/\tr(\C))$, where  $\tr(\C) = \C \cdot \C^{*}=
   \{f(x): f\in \C^{*}, x\in \C \}$ is the trace ideal of $\C$. 
  In dimension $1$,  we have $\ddeg(R)=\lambda(R/\tr(\C))={\rm res}(R)$  \cite[Proposition 2.3]{bideg}.
 They may differ in dimension $d>1$. }
 \end{Remark}

 \begin{Remark}{\rm In dimension $1$, $R$ is Gorenstein if and only if one of $\cdeg(R)$, $\ddeg(R)$ or ${\rm res}(R)$ vanishes, in which case all three vanish.
 If $R$ is not Gorenstein,  $\ddeg(R) \geq 1$, ${\rm res}(R)\geq 1$.  The cases when the minimal values are reached have the following designations.
 \begin{enumerate}[{\rm (1)}]
  \item  $\ddeg(R) = 1$ if and only if  $R$ is  a {\em Goto ring}.
  \item ${\rm res}(R)= 1$ if and only if $R$ is a {\em nearly Gorenstein ring}.
 \end{enumerate}
}\end{Remark}

We now recall very useful formulas for the computation of duals and biduals.
Let $\mathcal{Q}$ be the total ring of fractions of a Noetherian local ring $R$. If $I$ is an ideal containing a non zero divisor, then ${\ds  \Hom_R(I, R) = (R:_{\mathcal{Q}} I) }$.  A difficulty is that computer systems such as Macaulay2 \cite{Macaulay2} are set to calculate quotients of the form $(A:_R B)$ for ideals $A,B\subset R$, which is done with calculations of syzygies.  But it is possible to compute quotients in the total rings of fractions as follows.

\begin{Proposition}\cite[Proposition 3.3]{bideg} \label{bidual}
Let $I, J$ be $R$-ideals and let $a\in I$ be a regular element.

 \begin{enumerate}[{\rm (1)}]
  \item $\Hom_R(I,J)=a^{-1}(aJ:_{R} I)$. In particular,  $I^{*} = \Hom_R(I,R) = a^{-1} ((a) :_R I)$.
 \item  $I^{**} = \Hom_R(\Hom_R(I, R), R) =( (a):_R ((a):_R I))$.
\end{enumerate}
     \end{Proposition}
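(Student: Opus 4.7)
The plan is to work inside the total ring of fractions $\mathcal{Q}$ of $R$ and exploit the standard identification of homomorphisms of fractional ideals with their associated subsets of $\mathcal{Q}$. For part (1), I would first observe that any $\varphi \in \Hom_R(I,J)$ is determined by $\varphi(a)$: for every $x \in I$ the equality $a\varphi(x) = \varphi(ax) = x\varphi(a)$ forces $\varphi(x) = x\cdot \varphi(a)/a$ inside $\mathcal{Q}$. This identifies $\Hom_R(I,J)$ with the fractional ideal $\{q \in \mathcal{Q} : qI \subseteq J\}$ via $\varphi \mapsto \varphi(a)/a$, with inverse $q \mapsto (x \mapsto qx)$. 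Since $qa \in J \subseteq R$ for any such $q$, I can write $q = b/a$ with $b \in R$, and then $qI \subseteq J$ becomes $bI \subseteq aJ$, i.e., $b \in (aJ :_R I)$. This yields $\Hom_R(I,J) = a^{-1}(aJ :_R I)$, and specializing $J = R$ gives the stated formula for $I^{*}$.

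For part (2), I would iterate part (1). Set $K = (a):_R I$, so that $I^{*} = a^{-1}K$ as fractional ideals. Crucially, $a \in K$ (because $aI \subseteq (a)$), so $K$ still contains the regular element $a$ and part (1) applies with $I$ replaced by $K$. The substitution $q = aq'$ shows that the condition $q\cdot a^{-1}K \subseteq R$ is equivalent to $q'K \subseteq R$, giving
\[ I^{**} = (R :_{\mathcal{Q}} I^{*}) = (R :_{\mathcal{Q}} a^{-1}K) = a \cdot (R :_{\mathcal{Q}} K). \]
Applying part (1) to $K$ with $J = R$ now gives $(R :_{\mathcal{Q}} K) = a^{-1}((a) :_R K)$, and the leftover factor of $a$ cancels to produce $I^{**} = ((a) :_R K) = ((a) :_R ((a) :_R I))$, as claimed.

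There is no real obstacle; the whole argument is bookkeeping inside $\mathcal{Q}$. The only point worth double-checking is that the resulting ideal really lives inside $R$, as the statement of (2) tacitly asserts, but this is automatic: since $1 \cdot I \subseteq R$, one has $1 \in I^{*}$, and then any element $q \in I^{**}$ satisfies $q = q \cdot 1 \in R$ because $qI^{*} \subseteq R$.
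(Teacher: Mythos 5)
Your proposal is correct: the identification of $\Hom_R(I,J)$ with the fractional ideal $(J:_{\mathcal{Q}}I)=a^{-1}(aJ:_R I)$, and the iteration of it (legitimate because $K=(a):_R I$ again contains the regular element $a$, and $1\in I^{*}$ makes the step $I^{**}=(R:_{\mathcal{Q}}I^{*})$ routine), is exactly the standard argument behind the cited result. The survey itself gives no proof, only the reference to \cite[Proposition 3.3]{bideg}, and your bookkeeping in the total ring of fractions matches that source's approach, so there is nothing to correct.
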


When $R$ is $1$-dimensional we discuss relationships between almost Gorenstein rings (i.e., rings with minimal value $\cdeg(R) = r(R) - 1$) and Goto rings. 

\begin{Theorem} {\rm (\cite[Theorem 4.2]{bideg}, \cite[Proposition 6.1]{HHS19})} \label{AGorddeg} Let $(R,\m)$ be a Cohen-Macaulay local ring of dimension $1$
 with a canonical ideal $\C$.  If $R$ is almost Gorenstein $($not Gorenstein$)$,  then $R$ is a Goto ring.
\end{Theorem}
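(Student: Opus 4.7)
The plan is to exploit the trace-ideal reformulation $\ddeg(R)=\lambda(R/\tr(\C))$ recorded in Remark~\ref{trace}, and to prove $\tr(\C)=\m$ directly from the AGL hypothesis; this immediately yields $\ddeg(R)=\lambda(R/\m)=1$, which is exactly the Goto condition.

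First, I would extract from the proof of Proposition~\ref{1dimalmostg} the structural consequence of $R$ being almost Gorenstein but not Gorenstein: there is a minimal reduction $(a)$ of $\C$ such that $\m\C=\m(a)$. (This is precisely the equality that was used there to conclude $\nu(X)=r(R)-1$.) Note that non-Gorensteinness rules out the discrete valuation ring case that would otherwise obstruct this.

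Next, I would combine this with the computational formula $\C^{*}=a^{-1}\bigl((a):_{R}\C\bigr)$ from Proposition~\ref{bidual}. From $\m\C\subseteq(a)$ we obtain $\m\subseteq(a):_{R}\C$, hence $a^{-1}\m\subseteq\C^{*}$, and multiplying by $\C$ gives
\[
\tr(\C)=\C\cdot\C^{*}\ \supseteq\ \C\cdot a^{-1}\m\ =\ a^{-1}(\m\C)\ =\ a^{-1}\m(a)\ =\ \m.
\]

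It remains to show the reverse containment $\tr(\C)\subseteq\m$; since $\tr(\C)$ is an $R$-ideal, this amounts to $\tr(\C)\ne R$. If instead $\tr(\C)=R$, then the fractional ideal $\C$ (which contains a regular element) would satisfy $\C\cdot(R:_{\mathcal{Q}}\C)=R$, i.e., $\C$ is invertible, hence projective, hence free of rank $1$ over the local ring $R$; thus $\C\simeq R$ and $R$ is Gorenstein, contradicting the hypothesis. Therefore $\tr(\C)=\m$, and
\[
\ddeg(R)\ =\ \lambda\bigl(R/\tr(\C)\bigr)\ =\ \lambda(R/\m)\ =\ 1,
\]
so $R$ is a Goto ring. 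The main obstacle is really the last step: one needs the classical principle that, for an ideal containing a non-zerodivisor in a local ring, $\tr(\C)=R$ forces $\C\simeq R$. Everything else is a formal manipulation of the defining identity $\m\C=\m(a)$ together with the bidual formulas of Proposition~\ref{bidual}.
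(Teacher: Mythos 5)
Your proof is correct, but it takes a different route from the paper's. Both arguments start from the same structural input extracted from the proof of Proposition~\ref{1dimalmostg} (a minimal reduction $(a)$ of $\C$ with $\m\C=\m(a)\subseteq (a)$, available because $R$ is AGL and not Gorenstein), and both use the colon formulas of Proposition~\ref{bidual}; but the paper's endgame computes the bidual explicitly -- it shows $(a):_R\C=\m$, deduces $\C^{**}=(a):_R\m$, identifies $\C^{**}/(a)$ with the socle of $R/(a)$ (length $r(R)$), and then gets $\ddeg(R)=\lambda(\C^{**}/(a))-\lambda(\C/(a))=r(R)-(r(R)-1)=1$ -- whereas you never touch $\C^{**}$ and instead prove $\tr(\C)=\m$ and invoke the trace reformulation $\ddeg(R)=\lambda(R/\tr(\C))$ of Remark~\ref{trace} (i.e.\ the residue identity from the bideg paper), finishing with the classical fact that $\tr(\C)=R$ would make $\C$ invertible, hence free of rank one over the local ring, forcing $R$ Gorenstein. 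Your route is essentially the nearly-Gorenstein/trace-ideal argument in the spirit of the Herzog--Hibi--Stamate reference also cited for this theorem: it is shorter and avoids the socle/type count, at the price of importing the identity $\ddeg(R)=\lambda(R/\tr(\C))$ as a black box and the invertibility criterion; the paper's computation is self-contained modulo Proposition~\ref{bidual} and yields the extra information $\C^{**}=(a):_R\m$, i.e.\ an explicit description of the bidual as the socle lift, which is what makes the length count immediate. One small remark: your inclusion $\m\subseteq\tr(\C)$ can be obtained even more directly, since $a\in\C$ gives $\tr(\C)\supseteq a\,\C^{*}=(a):_R\C\supseteq\m$, so the full equality $\m\C=\m(a)$ is not really needed for that step, only $\m\C\subseteq(a)$.
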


\begin{proof} From the proof of Proposition \ref{1dimalmostg}
there exists an exact sequence of $R$-modules
\[ 0 \rightarrow (a) \rightarrow \C \rightarrow X \rightarrow  0 \]
such that $(a)$ is a minimal reduction of $\C$ and $X \simeq (R/\m)^{r(R)-1}$.  By applying $\Hom_R(\cdot, (a))$ to the exact sequence above, we obtain the exact sequence
\[ 0\rar  \Hom_R(\C, (a)) \rar R \rar  \Ext_R^{1}(X, (a)). \]
Since $R$ is not Gorenstein, we have $\C \neq (a)$. Therefore the image of $\Hom_R(\C, (a))=(a):_R \C$ in $R$ is a proper ideal of $R$. Since $\m \C\subset (a)$ it follows that $\m= (a):_R \C$.

By Proposition~\ref{bidual} we have that $\C^{**} =( (a):_R ((a):_R \C))= (a):_R \m$. Therefore $\C^{**}/(a)$ is isomorphic to the socle of $R/(a)$  in $R$ and we obtain 
\[\ddeg(R) = \lambda(\C^{**}/(a)) - \lambda(\C/(a)) =r(R)-(r(R)-1)= 1. \qedhere\] 
\end{proof}

\begin{Corollary}\label{one} Let $(R,\m)$ be a Cohen-Macaulay local ring of dimension $1$
 with a canonical ideal $\C$.  If $\cdeg(R)=1$,  then $\ddeg(R) =1$.
\end{Corollary}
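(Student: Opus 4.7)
The plan is to deduce this immediately by chaining two previously established results: first show that the hypothesis $\cdeg(R)=1$ forces $R$ to be almost Gorenstein (but not Gorenstein), and then invoke Theorem \ref{AGorddeg} to conclude $\ddeg(R)=1$.

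First I would observe that in dimension one the canonical ideal $\C$ is automatically equimultiple, so Corollary \ref{cdegr1} applies and gives $\cdeg(R) \ge r(R) - 1$. Substituting $\cdeg(R) = 1$ yields $r(R) \le 2$. On the other hand, $\cdeg(R) = 1 \ne 0$ rules out Gorensteinness, so $r(R) \ge 2$, and hence $r(R) = 2$. This is exactly the extremal equality $\cdeg(R) = r(R) - 1$, so by Proposition \ref{1dimalmostg}, $R$ is an almost Gorenstein ring (and not Gorenstein). This step is essentially Remark \ref{cdeg1}, which already records precisely this chain of implications.

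Next, with $R$ identified as almost Gorenstein but not Gorenstein, I would apply Theorem \ref{AGorddeg} directly to conclude $\ddeg(R) = 1$, giving the desired equality.

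There is no real obstacle here: the corollary is a purely formal consequence of Remark \ref{cdeg1} together with Theorem \ref{AGorddeg}. The only minor point worth verifying is that we really are in the setting where Proposition \ref{1dimalmostg} applies (i.e., a $1$-dimensional Cohen-Macaulay local ring with a canonical ideal), which is exactly our hypothesis, so no additional work is needed.
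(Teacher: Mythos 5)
Your proposal is correct and follows essentially the same route as the paper: the paper's proof also cites Remark \ref{cdeg1} (which encapsulates exactly your Corollary \ref{cdegr1} plus Proposition \ref{1dimalmostg} argument showing $r(R)=2$ and $R$ almost Gorenstein but not Gorenstein) and then concludes via Theorem \ref{AGorddeg}. Your only addition is spelling out the intermediate steps behind Remark \ref{cdeg1}, which is fine.
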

\begin{proof} Since $\cdeg(R)=1$,  by Remark \ref{cdeg1} $R$ is almost Gorenstein (not Gorenstein). Therefore $\ddeg(R) =1$ by Theorem \ref{AGorddeg}.
\end{proof}

The example below shows that the
converse of Theorem \ref{AGorddeg} (and of Corollary \ref{one}) does not hold true. Therefore, in dimension one, Goto rings are a larger class than almost Gorenstein rings. 

\begin{Example}{\rm Let $R=k \bl t^5,t^7,t^9 \br$ as in Example \ref{579}. We have that $\cdeg(R)=2$, and $\ddeg(R)=1$, by Proposition \ref{bideg3generated}, or by a direct calculation (see \cite[Example 4.3]{bideg}).}
\end{Example}

\section{Comparison of canonical and bi-canonical degrees}\label{comparisonsection}

 In this section we discuss the Comparison Conjecture, proposed in \cite{bideg}.
 
  \begin{Conjecture}{\rm (Comparison Conjecture) \label{cdegvsfddeg}
  Let $(R, \m)$ be a Cohen-Macaulay local ring that has a canonical ideal $\C$. Then
$$\ddeg(R) \leq \cdeg(R).$$
}\end{Conjecture}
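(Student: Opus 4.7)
The plan is to reduce to the one-dimensional case and then attack the inequality through the trace ideal of the canonical ideal.

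First, I would reduce to the case $\dim R = 1$. By Definitions \ref{defcdeg} and \ref{bideg}, both $\cdeg(R)$ and $\ddeg(R)$ are finite sums over the height-one primes $\p$ of $R$, with summands $\cdeg(R_\p)\,\deg(R/\p)$ and $\ddeg(R_\p)\,\deg(R/\p)$ respectively. Since $\deg(R/\p) \geq 0$, it suffices to prove $\ddeg(R_\p) \leq \cdeg(R_\p)$ for each height-one prime $\p$, noting that each $R_\p$ is a $1$-dimensional Cohen-Macaulay local ring with canonical ideal $\C_\p$.

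Second, assuming $\dim R = 1$, I would move to the fractional-ideal language of Remark \ref{Gotonotation}. Pick a minimal reduction $(a)$ of $\C$ and set $K = a^{-1}\C$ inside the total ring of fractions $\mathcal{Q}$ of $R$. Then $R \subseteq K$, $\lambda(K/R) = \cdeg(R)$, and by Proposition \ref{bidual} the bidual satisfies $K^{**} = a^{-1}\C^{**}$, whence $\lambda(K^{**}/K) = \ddeg(R)$. Because $1 \in K$, the dual $K^{*} = R :_{\mathcal{Q}} K$ is contained in $R$ and coincides with $\ann_R(K/R)$. The trace ideal $\tr_R(\C) = K \cdot K^{*}$ is then an $R$-ideal, and by Remark \ref{trace} we have $\ddeg(R) = \lambda(R/\tr_R(K))$.

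Third, the core of the argument is the two-step chain
\[
\lambda(R/\tr_R(K)) \;\leq\; \lambda(R/K^{*}) \;\leq\; \lambda(K/R).
\]
The first inequality is immediate: since $1 \in K$, $K^{*} = 1 \cdot K^{*} \subseteq K \cdot K^{*} = \tr_R(K)$, so $\tr_R(K) \supseteq K^{*}$. The second inequality exploits that $K/R$ is a faithful module over the Artin local quotient $A := R/K^{*}$. The plan is to produce either an injective $A$-linear map $A \hookrightarrow K/R$ or a filtration of $K/R$ by $A$-subquotients whose lengths sum to at least $\lambda(A) = \lambda(R/K^{*})$. When $r(R) = 2$, the module $K/R$ is cyclic, hence already isomorphic to $A$, and the second inequality is an equality; this already recovers Corollary \ref{one} and matches the type-$2$ verification carried out via the Sally module in Section \ref{sallysubsection}.

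The principal obstacle sits in the second inequality once $r(R) \geq 3$. In that range $K/R$ is generated by $r(R) - 1 \geq 2$ elements, no single element of $K/R$ need have $R$-annihilator equal to $K^{*}$, and the cyclic-embedding strategy collapses. A viable replacement would require controlling the distribution of socle and Loewy layers of $K/R$ over $A$, or a refined identity between $\tr_R(K)$ and an $\Ext$-module that accounts for the additional generators of $K/R$ failing to witness the full length of $A$. This is precisely the delicate phenomenon isolated by Herzog and Kumashiro in \cite{HK22}: the inequality $\lambda(R/K^{*}) \leq \lambda(K/R)$ holds whenever $r(R) \leq 3$ but admits a counterexample at type $5$. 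Any complete proof of the conjecture as stated must therefore either supply extra structural information forcing the second inequality to persist in high type or modify the statement by a correction term tracking the gap $\lambda(\tr_R(K)/K^{*})$ produced by the multiple generators of $K/R$.
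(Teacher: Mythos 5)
There is a fundamental obstruction you should be aware of: the statement you are trying to prove is stated in the paper as a \emph{Conjecture}, and the paper itself records that it is false in the stated generality. In Section \ref{comparisonsection} the authors cite Herzog--Kumashiro \cite[Example 2.5]{HK22}: for the numerical semigroup ring $R=k\bl H\br$ with $H=\langle 13,14,15,16,17,18,21,23\rangle$ (a one-dimensional Cohen--Macaulay local ring of type $5$ with a canonical ideal) one has $\ddeg(R)=9>\cdeg(R)=8$. So no blind proof of the full statement can succeed, and the gap you flag at the end of your argument is not merely a missing lemma but an unfillable one. What the paper actually proves is the special case of $3$-generated numerical semigroup rings, and it does so by a route quite different from yours: it computes $\ddeg(R)=\min\{a_1,a_2\}\cdot\min\{b_1,b_2\}\cdot\min\{c_1,c_2\}$ explicitly from the Herzog matrix via the colon formulas of Proposition \ref{bidual} (Proposition \ref{bideg3generated}), and compares with the Goto--Matsuoka--Phuong formula $\cdeg(R)=a_1b_1c_1$ or $a_2b_2c_2$ (Theorem \ref{abc}); the other verified cases (Gorenstein in codimension one, Goto rings, almost Gorenstein rings in dimension one via Theorem \ref{AGorddeg}) are likewise handled by direct socle-type computations, not by a trace-ideal chain.

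Concerning the chain itself: the reduction to dimension one and the identifications $\cdeg(R)=\lambda(K/R)$, $\ddeg(R)=\lambda(R/\tr(\C))$ (Remarks \ref{Gotonotation} and \ref{trace}) and the first inequality $\lambda(R/\tr(\C))\le\lambda(R/K^{*})$ (since $1\in K$ gives $K^{*}\subseteq \tr(\C)$) are fine, and your type-$2$ argument is essentially correct and consistent with Corollary \ref{one}. But two points in the second step are off. First, faithfulness of $K/R$ over $A=R/K^{*}$ does not by itself yield $\lambda(K/R)\ge\lambda(A)$; a faithful module over an Artinian local ring need not contain (or dominate in length) a copy of the ring unless extra structure is present, e.g.\ cyclicity as in your type-$2$ case, so this step is unsupported even as a plan. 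Second, you attribute to \cite{HK22} the validity of $\lambda(R/K^{*})\le\lambda(K/R)$ for type $\le 3$; what Herzog--Kumashiro (and \cite{HHS21} for three generators) prove is the weaker trace-colength bound $\lambda(R/\tr(\C))\le\cdeg(R)$, which is exactly the conjecture, not your strictly stronger intermediate inequality. Since your intermediate inequality implies the conjecture, the type-$5$ example shows it too must fail, and nothing in the cited literature certifies it in low type. So the honest conclusion is the one the paper draws: the inequality $\ddeg(R)\le\cdeg(R)$ holds in the verified classes (type $\le 3$, $3$-generated semigroup rings, Goto and almost Gorenstein rings), is open for type $4$, and is false in general.
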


\begin{Remark}{\rm The Comparison Conjecture holds in the following cases:
 \begin{enumerate}[{\rm (1)}]
\item $R$ is Gorenstein in codimension one, since $\cdeg(R)=0$ and $\ddeg(R)=0$. 
\item $R$ is a Goto ring, since $\ddeg(R) =1$ implies $\cdeg(R)\geq 1$, otherwise $R$ would be Gorenstein in codimension one.
\item $R$ is an almost Gorenstein ring of dimension one, by Theorem \ref{AGorddeg}.
 \end{enumerate}}
\end{Remark}

\subsection*{The Comparison Conjecture for 3-generated numerical semigroup rings} We prove the conjecture for 3-generated numerical semigroup rings. Notation is as in Section \ref{sectionabc}. In \cite[Proposition 6.1]{bideg} we calculated $\ddeg(R)$ when $a_1\leq a_2$, $b_{2} \leq b_{1}$, and $c_{1} \leq c_{2}$. We obtained that $\ddeg(R)=  a_{1} b_{2} c_{1}$, which supported the Comparison Conjecture, since $\cdeg(R)$ is either $a_1 b_1 c_1$ or $a_2 b_2 c_2$, by Theorem \ref{abc}. Here we complete the calculation to obtain $\ddeg(R)$ in general.

\begin{Proposition}\label{bideg3generated}
Let $R= k \bl t^a, t^b, t^c \br$. Let $\varphi = \left(\begin{matrix}
X^{a_1} & Y^{b_1} & Z^{c_1}\\
Y^{b_2}&Z^{c_2}&X^{a_2}\\
\end{matrix}\right)$
 be the Herzog matrix such that $R \simeq k \bl X, Y, Z \br/I_{2}(\varphi)$. 
Then $${\ds \ddeg(R)=  \min\{a_1,a_2\} *\min\{b_1,b_2\}* \min\{c_1,c_2\} }.$$
\end{Proposition}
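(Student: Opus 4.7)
The plan is to complete the explicit bidual computation begun in \cite[Proposition 6.1]{bideg}. That result handled the specific ordering $a_1 \leq a_2$, $b_2 \leq b_1$, $c_1 \leq c_2$ by working with the canonical ideal representation $\C = (x^{a_1}, y^{b_2})$, taking a regular element $a \in \C$, and applying the double colon formula $\C^{**} = (a) :_R ((a) :_R \C)$ from Proposition~\ref{bidual}, yielding $\ddeg(R) = a_1 b_2 c_1$. The task is to show that when any combination of the inequalities is reversed, the analogous computation produces the corresponding product of minima.

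My first step would be to exploit the three isomorphic representations of the canonical ideal exhibited in the preceding remark, namely
\[
\C \simeq (x^{a_1}, y^{b_2}) \simeq (y^{b_1}, z^{c_2}) \simeq (x^{a_2}, z^{c_1}).
\]
Since $\ddeg(R) = \lambda(\C^{**}/\C)$ is an invariant of $R$ and not of the chosen ideal, each representation yields the same answer. In a configuration where $a_1 > a_2$ (or the analogous flipped inequality), I would switch to whichever of the three representations places the smaller exponent from each coordinate pair into the "distinguished" generator position used in \cite[Proposition 6.1]{bideg}.

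The main computation in each case runs as follows. Write $\C = (u, v)$ in the chosen representation, with $u$ the generator carrying the minimal exponent of its coordinate pair. Using the appropriate Herzog relation among $f_1 = X^{a_1+a_2} - Y^{b_2}Z^{c_1}$, $f_2 = Y^{b_1+b_2} - X^{a_1}Z^{c_2}$, $f_3 = Z^{c_1+c_2} - X^{a_2}Y^{b_1}$, one sees that the "third" monomial $w$ satisfies $vw \in (u)$, giving $(u) :_R \C = (u, w)$. The double colon then reduces to $\C^{**} = (u) :_R w$, which the remaining two Herzog relations describe explicitly; in particular, the minimality of the exponent of $u$ in its coordinate pair is precisely what guarantees that the relation produces a new element of $\C^{**}$ not already in $\C$. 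The quotient $\C^{**}/\C$ is then generated by a single monomial whose annihilator in $R/\C$ has colength equal to the product of the three minima.

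The main obstacle is the case analysis: there are eight configurations of the inequalities on $\{a_1 \leq a_2,\, b_1 \leq b_2,\, c_1 \leq c_2\}$, and the row-swap symmetry of the Herzog matrix (which exchanges $(a_1,b_1,c_1) \leftrightarrow (a_2,b_2,c_2)$ after a compatible relabeling of generators of $H$) cuts these to four essentially distinct configurations. Verifying that in each configuration one of the three ideal representations puts the argument into the form treated in \cite[Proposition 6.1]{bideg}, and that the Herzog relations used to identify $\C^{**}$ remain "tight" (so that the length computation is not reduced), is where careful bookkeeping is required. Once this is done, the uniform answer $\min\{a_1,a_2\}\min\{b_1,b_2\}\min\{c_1,c_2\}$ follows.
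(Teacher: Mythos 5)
Your overall strategy (exploit the invariance of $\ddeg(R)$ and the explicit colon formula $\C^{**}=(a):_R((a):_R\C)$, then reduce the eight inequality patterns by symmetry) is the right framework, and for most configurations the switch among the three representations $(x^{a_1},y^{b_2})\simeq(y^{b_1},z^{c_2})\simeq(x^{a_2},z^{c_1})$ does exactly what you intend: these are the three columns of the Herzog matrix, so whenever two of the minima $\min\{a_1,a_2\},\min\{b_1,b_2\},\min\{c_1,c_2\}$ sit in the same column, choosing that column's representation (equivalently, a cyclic relabeling of the variables) recreates the configuration $a_1\leq a_2$, $b_2\leq b_1$, $c_1\leq c_2$ of \cite[Proposition 6.1]{bideg}, and the computation there gives the product of the minima. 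This is essentially the paper's ``case (1).''

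The gap is your claim that \emph{every} configuration can be brought into that form. It cannot: when all three minima lie in the same row of the Herzog matrix (say $a_1\leq a_2$, $b_1\leq b_2$, $c_1\leq c_2$; the opposite row is handled by your row-swap), each of the three columns pairs a minimal exponent with a maximal one, so none of the three representations has both distinguished generators carrying minimal exponents, and the reduction to \cite[Proposition 6.1]{bideg} fails. This is not just bookkeeping: the computation in that case is genuinely different. One still has $(x^{a_1}):\C=(x^{a_1},y^{b_1},z^{c_1})$, but the double colon now yields $\C^{**}=(x^{a_1},y^{b_2},z^{c_2}y^{b_2-b_1})$, so the new generator of $\C^{**}/\C$ is a \emph{mixed} monomial rather than a pure power (contrary to your heuristic that minimality of the exponent of $u$ produces the new element), and the length is computed by counting the $a_1b_1c_1$ monomials $x^iy^jz^k$ with $0\leq i\leq a_1-1$, $b_2-b_1\leq j\leq b_2-1$, $c_2\leq k\leq c_1+c_2-1$, giving $\lambda(\C^{**}/\C)=a_1b_2(c_1+c_2)-\bigl(a_1b_2(c_1+c_2)-a_1b_1c_1\bigr)=a_1b_1c_1$. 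So your argument needs this second case worked out separately (as the paper does); as written, the same-row configuration is exactly the one your case analysis asserts away.
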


\begin{proof} We note that there are essentially two (possibly overlapping) cases, after a suitable change of variables.
\begin{enumerate}[{\rm (1)}]
\item  Two among $\min\{a_1,a_2\}, \min\{b_1,b_2\}, \min\{c_1,c_2\}$ occur as exponents on the same column of the matrix.
\item All $\min\{a_1,a_2\}, \min\{b_1,b_2\}, \min\{c_1,c_2\}$ occur as exponents on the same row of the matrix.
\end{enumerate}

\medskip

Let   $x, y, z$ be the images of $X, Y, Z$ in $R$ respectively. Then we have
\[ x^{a_{1}+a_{2}} = y^{b_{2}}z^{c_{1}}, \quad  y^{b_{1}+b_{2}} = x^{a_{1}}z^{c_{2}}, \quad z^{c_{1}+c_{2}} = x^{a_{2}} y^{b_{1}}. \] 
The ideal  ${\ds \C = (x^{a_{1}}, y^{b_{2}})}$ is the canonical ideal of $R$ and  ${\ds (x^{a_{1}}) : \C = ( x^{a_{1}}, y^{b_{1}}, z^{c_{1}})}$, since 

\[ \begin{array}{ll}
{\ds y^{b_{1}} y^{b_{2}} = x^{a_{1}}z^{c_{2}} \in (x^{a_{1}}),} \quad & {\ds z^{c_{1}} y^{b_{2}} = x^{a_{1}+a_{2}}  \in (x^{a_{1}}).} 
\end{array}  \] 
Then by Proposition~\ref{bidual},  we have 
\[ \C^{**} = (x^{a_{1}}) : ( (x^{a_{1}}) : \C ) =  (x^{a_{1}}) : ( x^{a_{1}}, y^{b_{1}}, z^{c_{1}}). \] 
For case (1) we may assume that $a_1\leq a_2$, $b_{2} \leq b_{1}$, and $c_{1} \leq c_{2}$. Then we have
\[ \begin{array}{ll}
{\ds z^{c_{2}} y^{b_{1}} = z^{c_{1}}y^{b_{2}}z^{c_{2}-c_{1}}y^{b_{1}-b_{2}}   =   x^{a_{1}+ a_{2}} z^{c_{2}-c_{1}}y^{b_{1}-b_{2}}  \in (x^{a_{1}}),} & {\ds  z^{c_{2}}z^{c_{1}} = x^{a_{2}}y^{b_{1}} \in (x^{a_{1}}). } 
\end{array}  \] 
It follows that ${\ds \C^{**} = (x^{a_{1}}, y^{b_{2}}, z^{c_{2}})}$. Moreover, since $z^{c_{1}+c_{2}} \in (x^{a_{1}})$, we have that  $\C = (x^{a_{1}}, y^{b_{2}}) = (x^{a_{1}}, y^{b_{2}}, z^{c_{1}+c_{2}}) $. Therefore, 
\[ \begin{array}{rcl}
{\ds   \ddeg(R) }  &=& {\ds   \lambda(R/\C) - \lambda(R/\C^{**}) } \vspace{0.1 in} \\
&=& {\ds   \lambda( R/( x^{a_{1}}, y^{b_{2}}, z^{c_{1}+c_{2}}) ) - \lambda(R/(x^{a_{1}}, y^{b_{2}}, z^{c_{2}}) ) } \vspace{0.1 in} \\  &= & {\ds a_{1}b_{2}(c_{1}+c_{2}) - a_{1}b_{2}c_{2} } \vspace{0.1 in} \\ &=& {\ds  {a_1} {b_2}   {c_1}. } 
\end{array}   \]
For case (2) we may assume that that $a_1\leq a_2$, $b_{1} \leq b_{2}$, and $c_{1} \leq c_{2}$. 
We have that 
\[ \C^{**} = (x^{a_{1}}) : ( (x^{a_{1}}) : \C ) =  (x^{a_{1}}) : ( x^{a_{1}}, y^{b_{1}}, z^{c_{1}})=(x^{a_1}, y^{b_2}, z^{c_2}y^{b_2-b_1}), \] 
since 
\[ \begin{array}{l}
z^{c_2}y^{b_2-b_1}y^{b_1}=z^{c_2}y^{b_2}=z^{c_2-c_1}y^{b_2}z^{c_1}=z^{c_2-c_1}x^{a_1+a_2} \in (x^{a_{1}}),\\
\\
z^{c_2}y^{b_2-b_1}z^{c_1}=z^{c_1+c_2}y^{b_2-b_1}=x^{a_2}y^{b_1}y^{b_2-b_1}=x^{a_2}y^{b_2}\in (x^{a_{1}}).
\end{array}  \] 
Since $z^{c_{1}+c_{2}} = x^{a_{2}} y^{b_{1}}\in (x^{a_1})$, we have $\C=(x^{a_1}, y^{b_2}, z^{c_1+c_2})$, $\C^{**}=(x^{a_1}, y^{b_2}, z^{c_1+c_2},z^{c_2}y^{b_2-b_1})$. Because there are $a_1b_1c_1$ monomials of the form $x^iy^jz^k$, $0\leq i\leq a_1-1$, $b_2-b_1\leq j\leq b_2-1$, $c_2\leq i\leq c_1+c_2-1$, it follows that 

\[ \begin{array}{rcl}
{\ds   \ddeg(R) }  &=& {\ds  \lambda( R/( x^{a_{1}}, y^{b_{2}}, z^{c_{1}+c_{2}}) ) - \lambda(R/(x^{a_{1}}, y^{b_{2}}, z^{c_1+c_2},z^{c_2}y^{b_2-b_1}) } \vspace{0.1 in} \\  &= & {\ds a_{1}b_{2}(c_{1}+c_{2}) - \big(a_{1}b_{2}(c_{1}+c_2)-a_1b_1c_1 \big)} \vspace{0.1 in} \\
 &=& {\ds  {a_1} {b_1} {c_1}. } 
\end{array}   \]
\end{proof}

From Theorem \ref{abc} and Proposition \ref{bideg3generated} we have the following.

\begin{Corollary} Let $R= k\bl t^a, t^b, t^c \br$ be a $3$-generated semigroup ring. Then $\ddeg(R) \leq \cdeg(R)$; that is, the Comparison Conjecture holds.
\end{Corollary}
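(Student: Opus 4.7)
The plan is to read the Corollary as a direct consequence of the two preceding results, Theorem \ref{abc} and Proposition \ref{bideg3generated}, with essentially no further work beyond a termwise comparison.

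First I would recall the formula from Proposition \ref{bideg3generated}, namely
\[ \ddeg(R) = \min\{a_1, a_2\} \cdot \min\{b_1, b_2\} \cdot \min\{c_1, c_2\}. \]
From this it is immediate that both $\ddeg(R) \leq a_1 b_1 c_1$ and $\ddeg(R) \leq a_2 b_2 c_2$, because each factor on the right is at most the corresponding exponent appearing in either row of the Herzog matrix.

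Next I would invoke Theorem \ref{abc}, which identifies $\cdeg(R)$ as precisely one of these two products: $\cdeg(R) = a_1 b_1 c_1$ when $bb_2 - aa_1 > 0$, and $\cdeg(R) = a_2 b_2 c_2$ when $bb_2 - aa_1 < 0$. In either case, the matching inequality from the previous paragraph gives $\ddeg(R) \leq \cdeg(R)$, which is the desired conclusion.

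I do not expect any real obstacle here, since both ingredients are already established in the preceding sections. The only point I would be careful to check is that the exponents $a_i, b_i, c_i$ used in Theorem \ref{abc} and in Proposition \ref{bideg3generated} are read off from the \emph{same} normalized Herzog matrix under a consistent labeling convention, so that the two bounds on $\ddeg(R)$ and the two possible values of $\cdeg(R)$ line up correctly in each of the two sign cases for $bb_2 - aa_1$.
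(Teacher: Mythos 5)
Your proposal is correct and matches the paper's own argument: the paper derives this corollary directly from Theorem \ref{abc} and Proposition \ref{bideg3generated}, exactly as you do, since $\min\{a_1,a_2\}\min\{b_1,b_2\}\min\{c_1,c_2\}$ is bounded by both $a_1b_1c_1$ and $a_2b_2c_2$, one of which equals $\cdeg(R)$. Your caution about using a consistent labeling of the Herzog matrix in both results is exactly the right point to check, and it holds since both statements refer to the same matrix $\varphi$ from Section \ref{canonicalsection}.
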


\begin{Remark}
{\rm For $R=k \bl t^a, t^b, t^c \br$, the value of $\ddeg(R)$ was also independently
calculated in \cite[Proposition 3.1]{HHS21}, in accordance with the fact that in dimension one $\ddeg(R)$ is the residue of $R$ (Remark \ref{trace}). 
 }\end{Remark}

 \subsection*{Recent developments on the Comparison Conjecture for numerical semigroup rings}

 In the recent paper \cite{HK22}, Herzog and Kumashiro studied upper bounds on the colength of the trace of the canonical module in numerical semigroup rings, i.e., upper bounds on the bi-canonical degree. Let $R = K\bl H \br$. They study when the inequality $\l(R/ \tr(\C)) \leq g(H) - n(H)$
holds. In their set-up $g(H) = |\mathbb{N} \setminus H |$ and $n(H) = |\{h \in H : h < F(H)\}|$ denote
the number of gaps and the number of non-gaps, respectively, and $F(H)$
denotes the Frobenius number of $H$; that is, the largest integer in $\mathbb{N}\setminus H$. We refer to \cite{HK22} for the background. It follows from \cite[Lemma 2.2]{HK22} and Remark \ref{Gotonotation} that $g(H) - n(H)=\cdeg(R)$. Therefore the above inequality is equivalent to the Comparison Conjecture. We summarize their results. Recall that $r(R)$ denotes the type of $R$.
  
\begin{enumerate}[(1)]

\item Suppose that $r(R)\leq 3$. Then the Comparison Conjecture holds \cite[Theorem 2.4]{HK22}. See also Proposition 3.2 in the earlier paper \cite{HHS21} for the case of 3-generated semigroups. 

\item There is a counterexample to the Comparison Conjecture when $r(R)=5$ \cite[Example 2.5]{HK22}. Let $H = <13, 14, 15, 16, 17, 18, 21, 23>$. 
 Then $\ddeg(R)=9>\cdeg(R)=8$. The authors note that this example arises from far-flung Gorenstein rings, studied in \cite[Section 4]{HK22}. It is shown in \cite[Corollary 2.2]{HHS21} that the inequality $\l(R/ \tr(\C)) \leq n(H)$ always holds. $R$ is far-flung Gorenstein if and only if $\l(R/ \tr(\C))=n(H)$. Therefore, far-flung Gorenstein rings are good candidates for counterexamples, since the bi-canonical degree is as big as possible.
 
 \item The case $r(R)=4$ is open \cite[Question 2.6]{HK22}. The examples we tried so far support the conjecture.
 
 \end{enumerate}
 
 We conclude this section with the following question.
 
 \begin{Question}{\rm Can we find other classes of rings for which the Comparison Conjecture holds? What about 2-AGL rings?}
  \end{Question}
 
 \section{Canonical degrees and change of rings}\label{changeringsection}
 
 Degree formulas are often statements about change of rings. In \cite{blue1} and \cite{bideg} we examined several cases for cdeg and bi-deg respectively. In particular we considered polynomial extensions, power series and completions in \cite[Section 6]{blue1}. In this section we highlight and summarize the cases of augmented rings, $(\m:\m)$, and hyperplane sections. 
 
 \subsection*{Augmented rings} Let $(R,\m)$ be a $1$-dimensional Cohen-Macaulay local ring with a canonical ideal $\C$.
Consider the augmented ring $R \ltimes \m$.
That is, $R \ltimes \m = R\oplus \m \epsilon$, $\epsilon^2 = 0$. (Just to keep the components apart in computations we use $\epsilon$ as a place holder.)
From the results in \cite[Section 6]{blue1} we have that $(\C:_{R} \m) \times \C$ is a canonical ideal of $R \ltimes \m$. Motivated by \cite[Theorem 6.5]{GMP11} we obtained the following. Recall that $r(R)$ denotes the type of $R$.

\begin{Theorem} {\rm (\cite[Theorem 6.8, Corollary 6.9]{blue1}, \cite[Proposition 5.1]{bideg})}  Let $(R, \m)$ be a $1$-dimensional Cohen-Macaulay local ring with a canonical ideal. 
\begin{enumerate}[{\rm (1)}]
\item Suppose that $R$ is not a discrete valuation ring. Then \[ \cdeg(R \ltimes \m)= 2 \, \cdeg(R)+2 \quad \mbox{\rm and} \quad r(R \ltimes \m) = 2 \, r(R)+1.\]
\item $R$ is an almost Gorenstein ring if and only if $R \ltimes \m$ is an almost Gorenstein ring.
\item Suppose that $R$ is not Gorenstein. Then \[ \ddeg(R \ltimes \m) = 2\, \ddeg (R) -1.\]
In particular if $R$ is a Goto ring, then $R \ltimes \m$ is also a Goto ring.
\end{enumerate}
\end{Theorem}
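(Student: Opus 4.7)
Set $S = R \ltimes \m$ with maximal ideal $\mathfrak{M} = \m \ltimes \m$; by the results recalled in this section, $\mathfrak{C} := (\C :_R \m) \ltimes \C$ is a canonical ideal of $S$. Since $R$ is not a DVR, pick a regular element $a \in \C$ that is a minimal reduction of $\C$; as $(\C :_R \m)$ is an overideal of $\C$ sharing its integral closure in the $1$-dimensional Cohen--Macaulay ring $R$, the element $a$ is also a reduction of $(\C :_R \m)$. Hence $(a, 0) \in \mathfrak{C}$ is $S$-regular and gives a minimal reduction of $\mathfrak{C}$, with $(a,0)S = aR \ltimes a\m$.

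For part (1), I will compute
\[
\cdeg(S) = \l_S(\mathfrak{C}/(a,0)S) = \l_R\bigl((\C :_R \m)/aR\bigr) + \l_R(\C/a\m),
\]
and apply the three identities $\l_R((\C :_R \m)/\C) = 1$ (a signature property of the canonical ideal, coming from $\Ext^1_R(k, \C) \cong k$), $\l_R(\C/aR) = \cdeg(R)$, and $\l_R(aR/a\m) = 1$, to obtain $2\cdeg(R) + 2$. For the type, the key calculation is that $\mathfrak{M}\mathfrak{C} = \m(\C :_R \m) \ltimes \m(\C :_R \m)$, whence
\[
\mathfrak{C}/\mathfrak{M}\mathfrak{C} \simeq (\C :_R \m)/\m(\C :_R \m) \,\oplus\, \C/\m(\C :_R \m).
\]
Combined with the equality $\m\C = \m(\C :_R \m)$ (coming from the structure of the socle representative of $(\C :_R \m)/\C$) and the short exact sequence $0 \to \C/\m\C \to (\C :_R \m)/\m\C \to (\C :_R \m)/\C \to 0$, this gives $\nu_S(\mathfrak{C}) = (r(R) + 1) + r(R) = 2r(R) + 1$.

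Part (2) then follows algebraically from part (1) via Proposition~\ref{1dimalmostg}: $R$ is AGL iff $\cdeg(R) = r(R) - 1$, iff $\cdeg(S) = 2r(R) = r(S) - 1$, iff $S$ is AGL; since $r(S) \geq 3$ always, $S$ is never Gorenstein and there is no degenerate case. For the bi-canonical degree in (3), the heart of the argument is to compute $\mathfrak{C}^{**} = (a,0)S :_S \bigl((a,0)S :_S \mathfrak{C}\bigr)$ via Proposition~\ref{bidual}(2). Decomposing $(x, y\epsilon) \in S$, the inner colon translates to three conditions in $R$ (namely $x(\C :_R \m) \subseteq aR$, $x\C \subseteq a\m$, and $y(\C :_R \m) \subseteq a\m$) that combine to give $(a,0)S :_S \mathfrak{C} = A \ltimes A$ for an ideal $A \subseteq \m$; iterating, the outer colon computes to $\mathfrak{C}^{**} = \C^{**} \ltimes \C^{**}$, the crucial intermediate identity being $aR :_R A = \C^{**}$, which reduces the $S$-level colon back to the base-ring bidual formula $\C^{**} = aR :_R (aR :_R \C)$. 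Using the $1$-dim CM inclusion $(\C :_R \m) \subseteq \C^{**}$ (the socle representative of $R/\C$ lies in the bidual),
\[
\ddeg(S) = \l_R(\C^{**}/(\C :_R \m)) + \l_R(\C^{**}/\C) = (\ddeg(R) - 1) + \ddeg(R) = 2\ddeg(R) - 1,
\]
from which the Goto consequence $\ddeg(R) = 1 \Rightarrow \ddeg(S) = 1$ is immediate.

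The main obstacle will be the nested colon computation in (3): showing that both coordinates of $\mathfrak{C}^{**}$ collapse to the single base-ring ideal $\C^{**}$ requires tracking the interaction between the $R$-component and the nilpotent component $\m\epsilon$ at each stage, and assembling the several partial colon conditions into the single bidual formula of the base ring. The auxiliary inclusion $(\C :_R \m) \subseteq \C^{**}$ and the equality $\m\C = \m(\C :_R \m)$ used in (1), although verified on examples and natural in light of the $1$-dimensional CM hypothesis, both deserve separate justification.
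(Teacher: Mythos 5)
Since this survey states the theorem only with citations to \cite{blue1} and \cite{bideg} and gives no proof of its own, I can only judge your argument on its own terms; measured that way it is a reasonable outline of the natural route (the explicit canonical ideal $(\C:_R\m)\ltimes\C$, a reduction of the form $(a,0)$, length bookkeeping, and the colon formula for biduals), but the load-bearing steps are asserted rather than proved, and in two places the justification offered is not valid as stated. In (1), the claim that $a$ is a reduction of $\C:_R\m$ because this overideal ``shares the integral closure'' of $\C$ is precisely the nontrivial point: the equality $\e_{0}(\C:_R\m)=\e_{0}(\C)$ is essentially equivalent to the formula $\cdeg(R\ltimes\m)=2\cdeg(R)+2$ you are trying to prove, so assuming it is close to circular. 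Likewise $\l\bigl((\C:_R\m)/\C\bigr)=1$ (which needs the identification $\Hom_R(\m,\C)=\C:_R\m$, i.e.\ that the colon taken in the total ring of fractions lands inside $R$) and the equality $\m\C=\m(\C:_R\m)$ are used without proof. All three facts are true, but the missing engine is an argument such as: $\C:_{\mathcal{Q}}\m=\C\,\m^{-1}$ (because $\C:_{\mathcal{Q}}\C=R$ forces $\C\m^{-1}\supsetneq\C$, and the quotient has length one), together with the fact that for a non-DVR $\m^{-1}=\m:\m$ is a ring, so $\m\m^{-1}=\m$ and the powers of $\m^{-1}$ stabilize; this yields $\m(\C:\m)=\m\m^{-1}\C=\m\C$, the containment in $R$, and that $(a)$ is a reduction of $\C\m^{-1}$. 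Without something of this kind, part (1) is incomplete. In (2) the parenthetical ``there is no degenerate case'' is wrong as written: part (1) excludes DVRs, and when $R$ is a DVR both $R$ and $R\ltimes\m\simeq R[Y]/(Y^2)$ are Gorenstein, so that case must be disposed of separately (easy, but it is not covered by your $r(S)\ge 3$ remark).

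Part (3) is the weakest: the identity $(a,0)S:_S\mathfrak{C}=A\ltimes A$ is not what the direct computation gives. Unwinding $(x,y\epsilon)\cdot(u,v\epsilon)\in aR\ltimes a\m$ for $u\in\C:\m$, $v\in\C$ produces the two a priori different components $\bigl(aR:_R(\C:\m)\bigr)\cap\bigl(a\m:_R\C\bigr)$ and $a\m:_R(\C:\m)$, and you never show they coincide; the decisive claims $\mathfrak{C}^{**}=\C^{**}\ltimes\C^{**}$ and $(\C:_R\m)\subseteq\C^{**}$ are exactly what you defer as ``the main obstacle'' and as ``deserving separate justification,'' so (3) is a plan rather than a proof. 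These claims are in fact correct and reachable: computing $S:\mathfrak{C}$ and $S:(S:\mathfrak{C})$ in the total ring of fractions of $S$, and using that $\m:_{\mathcal{Q}}I=R:_{\mathcal{Q}}I$ whenever the ideal $I$ is not invertible (this is where ``$R$ not Gorenstein'' enters), one gets $\mathfrak{C}^{*}=(R:D)\ltimes(R:D)$ and $\mathfrak{C}^{**}=D^{**}\ltimes D^{**}$ with $D=\C:\m$; then $D^{**}=\C^{**}$ because $\C\subseteq D\subseteq\C^{**}$, the last inclusion following from $D=\C\m^{-1}$ and $\tr(\C)\subseteq\m$ for non-Gorenstein $R$. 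Supplying these arguments (and the edge case where $R:D$ could be principal) would turn your outline into a complete proof; as submitted, the substantive content of both (1) and (3) is still missing.
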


Augmented rings in the 2-AGL context are discussed in \cite[Section 4]{CGKM}.

\subsection*{Calculations in $(\m:\m)$} Let $(R, \m)$ be a $1$-dimensional Cohen-Macaulay local ring with a canonical ideal $\C$. Let $\mathcal{Q}$ be the total ring of fractions of $R$ and $A  = (\m:_{\mathcal{Q}} \m)$. The Gorenstein property of $A$ was studied in \cite{GMP11}. In  \cite[Section 8]{bideg} we found a formula for $\cdeg(A)$ in relation to the invariants of $R$. We have that the ideal $\m \C$ is the canonical ideal of $A$.

\begin{Theorem}\label{mm}\cite[Theorem 8.3]{bideg}
Let $(R, \m)$ be a Cohen Macaulay local ring of dimension $1$ with a canonical ideal.  Let $\mathcal{Q}$ be the total ring of fractions of $R$ and $A  = (\m:_{\mathcal{Q}} \m)$.  Suppose that $R$ is not a discrete valuation ring and that
 $(A, \M)$  is a local ring. Let  $e=[A/ \M : R/\m]$. Then  
\[ \cdeg(A) = e^{-1} ( \cdeg(R) + \e_0(\m) - 2r(R) ).\] 
\end{Theorem}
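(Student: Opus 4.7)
The plan is to expand $\cdeg(A) = \e_0^A(\m\C) - \lambda_A(A/\m\C)$, exploiting that $\m\C$ is the canonical ideal of $A$ (as noted earlier in this section) and that $\lambda_A(M) = e^{-1}\lambda_R(M)$ for every $A$-module $M$ of finite length, and to reduce each summand to $R$-invariants.

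The first key step is to identify $A$ explicitly. Since $R$ is not a DVR, $\m$ is not invertible, so $\m^{-1}\m \subsetneq R$; as $\m^{-1}\m$ is an $R$-ideal containing $\m$, this forces $\m^{-1}\m=\m$, i.e., $\m^{-1}\subseteq A$. The reverse inclusion is immediate, so $A=\m^{-1}=\Hom_R(\m,R)$. Applying $\Hom_R(-,R)$ to $0\to\m\to R\to k\to 0$ yields $0\to R\to A\to \Ext^1_R(k,R)\to 0$, hence $\lambda_R(A/R)=r(R)$ by definition of the type. Combined with $\lambda_R(\C/\m\C)=\nu_R(\C)=r(R)$, this produces the central length identity
$$\lambda_R(A/\m\C) \;=\; \lambda_R(A/R) + \lambda_R(R/\C) + \lambda_R(\C/\m\C) \;=\; 2\,r(R) + \lambda_R(R/\C).$$

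For the multiplicity side, I would use the associativity formula for multiplicity as an $R$-module: at each minimal prime $\p$ of $R$ one has $\m R_\p=R_\p$, so $A_\p=(R_\p:R_\p)=R_\p$; therefore $\lambda_{R_\p}(A_\p)=\lambda_{R_\p}(R_\p)$ at every minimal $\p$, and hence $\e_0^R(\m\C,A)=\e_0^R(\m\C)$. Passing from $R$-lengths to $A$-lengths gives $\e_0^A(\m\C)=e^{-1}\e_0^R(\m\C)$. The dimension-one product formula $\e_0^R(\m\C)=\e_0(\m)+\e_0(\C)$, which follows because for principal minimal reductions $(c)$ of $\m$ and $(d)$ of $\C$ the principal ideal $(cd)$ reduces $\m\C$ and $\lambda_R(R/(cd))=\lambda_R(R/(c))+\lambda_R(R/(d))$, now expresses $\e_0^A(\m\C)$ purely in terms of $R$-data.

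Assembling the pieces with $\cdeg(R)=\e_0(\C)-\lambda_R(R/\C)$ gives
$$\cdeg(A)\;=\;e^{-1}\bigl(\e_0(\m)+\e_0(\C)-2\,r(R)-\lambda_R(R/\C)\bigr)\;=\;e^{-1}\bigl(\cdeg(R)+\e_0(\m)-2\,r(R)\bigr).$$
The delicate point is the opening identification $A=\m^{-1}$, which is precisely what the non-DVR hypothesis gives us, and it is what unlocks the Ext computation $\lambda_R(A/R)=r(R)$ that drives the $-2r(R)$ term; once that is in place, associativity of multiplicity and careful bookkeeping between $R$- and $A$-lengths (where the residue-field degree $e$ enters uniformly at the passage) deliver the formula.
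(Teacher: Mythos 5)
Your argument is correct and is essentially the same computation that underlies the cited proof of \cite[Theorem 8.3]{bideg} (which this survey quotes rather than reproduces): it rests on the stated fact that $\m\C$ is a canonical ideal of $A$, the identification $A=\m^{-1}$ giving $\lambda_R(A/R)=r(R)$, the chain $\m\C\subseteq\C\subseteq R\subseteq A$ with $\lambda_R(\C/\m\C)=\nu(\C)=r(R)$, the dimension-one additivity $\e_0(\m\C)=\e_0(\m)+\e_0(\C)$, and the uniform factor $e$ converting $R$-lengths to $A$-lengths. The only point to polish is the appeal to principal minimal reductions of $\m$ and $\C$, which needs an infinite residue field; either pass first to $R(X)=R[X]_{\m R[X]}$ (this preserves $A$, $e$, and all lengths and multiplicities involved) or deduce $\e_0(\m\C)=\e_0(\m)+\e_0(\C)$ from the bigraded Hilbert polynomial of the pair $(\m,\C)$, which avoids reductions altogether.
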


\begin{Question}{\rm Under the same assumptions of Theorem \ref{mm} can we find a formula for $\ddeg(A)$?}
\end{Question}

\begin{Example}\cite[Example 8.8]{bideg} {\rm Let $R=k \bl t^5,t^7,t^9 \br$ as in Example \ref{579}. We have $\cdeg(R)=2$.  Moreover we have $\e_0(\m) = 5$,  $r(R)=2$ and $e=1$. Therefore $\cdeg(A)=3$. Let $\D$ be the canonical ideal of $A$. With a direct calculation we get $\lambda(A/\D) = 11$ and $\lambda(A/\D^{**})= 9$. Therefore
 $\ddeg(A) = 2$.}
\end{Example}

The structure of $(\m : \m)$ in connection with the 2-AGL property  is studied in \cite[Section 5]{CGKM}.

\subsection*{Hyperplane sections} 
 A change of rings issue is the comparison $\cdeg(R)$ to $\cdeg(R/(x))$ and $\ddeg(R)$ to $\ddeg(R/(x))$ for an
appropriate regular element $x$. We know that if $\C$ is a canonical module for $R$ then
$\C/x \C$ is a canonical module for $R/(x)$ with the same number of generators, so type is preserved under
specialization. However $\C/x \C$ may not be isomorphic to an ideal of $R/(x)$. Here is a case of good behavior. Suppose
$x$ is regular modulo $\C$. Then for the sequence
\[ 0 \rar \C \lar R \lar R/\C \rar 0,\]
we get the exact sequence
\[ 0 \rar \C/x\C  \lar R/(x) \lar R/(\C,x) \rar 0,\] so the canonical module
$\C/x\C$ embeds in $R/(x)$.
 Note that this leads to $\rho(\C) \geq \rho(\C/x\C)$.

 \medskip

\begin{Proposition}\label{hscdeg}\cite[Proposition 6.11]{blue1} Suppose $(R, \m)$ is a Cohen-Macaulay local ring of dimension $d \geq 2$ that has a canonical ideal $\C$. Suppose that $\C$ is equimultiple and  $x$ is regular modulo $\C$. Then
\[ \cdeg(R) \leq \cdeg(R/(x)).\]
\end{Proposition}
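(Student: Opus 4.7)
The plan is to lift the inequality to a multiplicity comparison on the module $M := \C/(a)$, where $(a)$ is a principal minimal reduction of the equimultiple canonical ideal $\C$. The strategy has three ingredients: realise a canonical ideal of $R/(x)$ that inherits a principal minimal reduction from $\C$, identify the resulting defining quotient $\overline{\C}/(\bar a)$ with the hyperplane section $M/xM$, and then close via the classical bound $\e_0(\m, M) \leq \e_0(\m, M/xM)$ for an $M$-regular element.

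First, I would verify that the setup descends cleanly to $R/(x)$. Since $\C$ contains an $R$-regular element and $x$ avoids $\Ass(R/\C)$, which contains all minimal primes of $\C$, the element $x$ is $R$-regular; hence $\C/x\C$ is a canonical module for $R/(x)$. The hypothesis on $x$ yields $\C \cap (x) = x\C$, and the short exact sequence displayed in the paragraph preceding the proposition identifies $\C/x\C$ with the ideal $\overline{\C} := (\C + (x))/(x)$ of $R/(x)$. I would then check $\overline{\C}$ is equimultiple of height one in $R/(x)$: its minimal primes correspond to primes of $R$ minimal over $\C + (x)$, which strictly contain minimal primes of $\C$ and so have height at least two in $R$; and $\overline{\C} \neq 0$ because $\C \subseteq (x)$ would force $\C = x\C = 0$ by Nakayama. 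Consequently $\cdeg(R/(x)) = \deg(\overline{\C}/(\bar a))$ by Definition \ref{defcdeg}, where $\bar a := a + (x)$. A short computation using $\C \cap (x) = x\C$ gives $\C \cap ((a) + (x)) = (a) + x\C$ and therefore
\[ \overline{\C}/(\bar a) \;=\; \frac{\C+(x)}{(a)+(x)} \;\cong\; \frac{\C}{(a) + x\C} \;\cong\; M/xM. \]
Finally I would check $x$ is $M$-regular: $R/(a)$ is Cohen-Macaulay with associated primes equal to the minimal primes of $\C$, all of which lie in $\Ass(R/\C)$ and so miss $x$; hence $x$ is regular on $R/(a)$ and on its submodule $M$.

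The argument would conclude with the chain
\[ \cdeg(R) \;=\; \e_0(\m, M) \;\leq\; \e_0(\m, M/xM) \;=\; \deg(\overline{\C}/(\bar a)) \;=\; \cdeg(R/(x)), \]
where the middle inequality invokes the classical fact that, for an $M$-regular element $x \in \m$, one always has $\e_0(\m, M) \leq \e_0(\m, M/xM)$, and the penultimate equality uses that Hilbert-Samuel multiplicity is insensitive to whether one regards an $x$-torsion module as an $R$-module or an $R/(x)$-module. The main obstacle is precisely this multiplicity inequality: while the equality $\e_0(\m, M) = \e_0(\m, M/yM)$ is classical for an $M$-superficial regular element $y$, a non-superficial regular section can only enlarge the multiplicity, and verifying this requires an Artin-Rees comparison between $\m^n M \cap xM$ and $x\m^{n-1}M$ (or equivalently, comparing $x$ to a generic superficial element). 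Everything else in the plan is careful but routine bookkeeping about how canonical ideals, the equimultiple condition, and principal minimal reductions descend through the regular hyperplane section $R \to R/(x)$.
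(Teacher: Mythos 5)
The survey itself does not reprint a proof of this proposition (it cites \cite[Proposition 6.11]{blue1}), so I can only judge your argument on its own terms; its overall architecture --- pass to $M=\C/(a)$, identify the corresponding quotient downstairs with $M/xM$, and finish with the classical inequality $\e_{0}(\m,M)\leq \e_{0}(\m,M/xM)$ for an $M$-regular element --- is the natural route and almost certainly the one taken in the source. Most of your bookkeeping is correct: $\C\cap (x)=x\C$, the identification $(\C+(x))/((a)+(x))\cong \C/((a)+x\C)=M/xM$, the equimultiplicity of the image ideal, the observation that $\Ass(R/(a))=\Ass(R/\C)$ (same radical, both quotients Cohen--Macaulay) so that $x$ is regular on $M$, and the final inequality, which you flag as the main obstacle but which is genuinely classical: from $\lambda(M/(\m^{n}M+xM))=\lambda(M/\m^{n}M)-\lambda\bigl(M/(\m^{n}M:_{M}x)\bigr)\geq \lambda(M/\m^{n}M)-\lambda(M/\m^{n-1}M)$ one reads off $\e_{0}(\m,M/xM)\geq \e_{0}(\m,M)$ with no superficiality needed.

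The genuine gap is the very first step: ``since $x$ avoids $\Ass(R/\C)$, which contains all minimal primes of $\C$, the element $x$ is $R$-regular.'' This does not follow. Because $R$ is Cohen--Macaulay and $R/\C$ is Cohen--Macaulay of dimension $d-1$, the primes in $\Ass(R/\C)$ all have height one, whereas $\Ass(R)=\operatorname{Min}(R)$ consists of height-zero primes; avoiding the former gives avoidance of the latter only if every minimal prime of $R$ is contained in some minimal prime of $\C$, and that is exactly what would have to be proved. It is not a formal consequence of height bookkeeping: already in the hypersurface $R=k\bl X,Y,Z\br/(XY)$ the height-one ideal $(x,z)$ plus the minimal prime $(y)$ is $\m$-primary, so an element of $(y)$ can avoid every minimal prime of $(x,z)$ while being a zerodivisor; any proof that this cannot happen for a \emph{canonical} ideal would have to use special properties of $\omega_R$ and is not supplied. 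This step is not cosmetic, since everything downstream --- $R/(x)$ being Cohen--Macaulay of dimension $d-1$, $\C/x\C\cong \omega_{R/(x)}$, indeed the very definition of $\cdeg(R/(x))$ --- requires $x$ to be a nonzerodivisor on $R$. The intended reading (consistent with the paragraph preceding the proposition, which already calls $\C/x\C$ \emph{the} canonical module of $R/(x)$, and with \cite{blue1}) is that $x$ is a regular element of $R$ which is moreover regular modulo $\C$; you should either adopt that as a hypothesis or give an actual argument for it, rather than the one-line deduction above. With that regularity in hand, the rest of your proof is correct.
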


\begin{Question}\label{Bertinicdeg}{\rm
 An interesting question is when there exists $x$ such that $\cdeg(R) = \cdeg(R/(x))$ in $2$ cases: (i) $\C$ is equimultiple and (ii) $\C$ is not necessarily equimultiple. It would be a Bertini type theorem.
 }\end{Question}

For bideg$(R)$, in accordance with the Comparison Conjecture
we propose the following.

\begin{Conjecture} \cite[Conjecture 5.5]{bideg} {\rm If $\C$ is equimultiple and
$x$ is regular mod $\C$, then $ \ddeg(R) \geq \ddeg(R/(x))$.
}\end{Conjecture}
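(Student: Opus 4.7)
My plan is to mimic the structure of the proof of Proposition~\ref{hscdeg} for $\cdeg$ but with the inequality running the opposite way, i.e.\ to compare the bidual modulo $x$ with the bidual of the reduction mod $x$, and conclude via the behaviour of $\m$-adic multiplicity under a superficial hyperplane section.

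\emph{Step 1 (regularity of $x$).} First I would check that $x$ is a non--zero--divisor on $R$, on $\C$, on $\C^{**}$, and on $M := \C^{**}/\C$. Since $R$ is Cohen--Macaulay and $x$ is regular mod $\C$, $x$ avoids $\mathrm{Ass}(R/\C)$, which contains $\mathrm{Ass}(R)$; hence $x$ is $R$--regular and therefore $\C$--regular and $\C^{**}$--regular (the last two are submodules of $R$). The embedding $M \subseteq R/\C$ gives $\mathrm{Ass}(M) \subseteq \mathrm{Ass}(R/\C)$, so $x$ is also $M$--regular. Let $(a)$ be a principal minimal reduction of $\C$ given by the equimultiple hypothesis; then $(\bar a) = (a+xR)/(x)$ is a minimal reduction of $\bar \C := \C/x\C$ in $\bar R := R/(x)$, and by \cite[Theorem 3.3.5]{BH}, $\bar\C$ is a canonical ideal of $\bar R$.

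\emph{Step 2 (the short exact sequence in $\bar R$).} Tensoring the defining sequence $0 \to \C \to \C^{**} \to M \to 0$ with $\bar R$ produces, using Step 1, the short exact sequence
\[
0 \;\longrightarrow\; \bar \C \;\longrightarrow\; \overline{\C^{**}} \;\longrightarrow\; M/xM \;\longrightarrow\; 0
\]
in $\bar R$, where $\overline{\C^{**}} := \C^{**}/x\C^{**}$ is identified with a submodule of $\bar R$. Using Proposition~\ref{bidual}, the bidual $\bar\C^{**}$ computed inside $\bar R$ equals $(\bar a) :_{\bar R} ((\bar a) :_{\bar R} \bar\C)$, and one checks that $(\bar a) :_{\bar R} \bar\C = \overline{(a,x):_R \C}$.

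\emph{Step 3 (key containment, the main obstacle).} The crux of the argument is to prove $\bar\C^{**} \subseteq \overline{\C^{**}}$ as submodules of $\bar R$. Concretely, given $\bar y \in \bar\C^{**}$ one must find a lift $y' \in \C^{**}$ with $\bar y' = \bar y$. Any lift $y$ of $\bar y$ satisfies $y\cdot ((a,x):_R \C) \subseteq (a,x)$, which is weaker than $y\cdot ((a):_R \C) \subseteq (a)$. The obstruction to correcting $y$ by an element of $xR$ is an $R$--linear map
\[
\varphi \,\colon\, \bigl((a):_R \C\bigr)/(a) \;\longrightarrow\; R/(a), \qquad w \;\longmapsto\; \text{``$x$--coefficient of $yw$''} \pmod{(a)},
\]
well defined because $x$ is regular mod $(a)$; the required lift exists if and only if $\varphi$ is given by multiplication by a scalar in $R/(a)$. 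Equivalently, $\varphi$ must lie in the image of the natural map $R/(a) \to \Hom_R(((a):_R \C)/(a), R/(a))$; by dualizing the short exact sequence $0 \to R \xrightarrow{a} R \to R/(a) \to 0$, the obstruction class lies in the $a$--torsion of $\mathrm{Ext}^1_R((a):_R \C,\,R)$. I expect this is the hardest step: one has to exploit the Cohen--Macaulay and equimultiple hypotheses, together with the identification $(a):_R \C \cong a\,\C^*$ as fractional ideals, to show this obstruction vanishes, quite possibly after first localizing at a codimension--one prime of $\bar R$ (where $\bar R_{\bar\q}$ is a one--dimensional Cohen--Macaulay local ring and the obstruction should become tractable through local duality).

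\emph{Step 4 (conclusion via multiplicity).} Once the inclusion $\bar\C^{**} \subseteq \overline{\C^{**}}$ is in hand, the short exact sequence of Step~2 gives an inclusion of $\bar R$--modules
\[
\bar M \,=\, \bar\C^{**}/\bar\C \;\hookrightarrow\; \overline{\C^{**}}/\bar\C \,=\, M/xM,
\]
both of dimension $d-2$. By additivity of the $\m$-adic multiplicity on short exact sequences of modules of equal dimension, $\deg(\bar M) \leq \deg(M/xM)$. Choosing $x$ within its class so that it is also a superficial element for $M$ (an open dense condition, compatible with regularity mod $\C$), we have $\deg_R(M) = \deg_{\bar R}(M/xM)$, and therefore
\[
\ddeg(R/(x)) \,=\, \deg(\bar M) \;\leq\; \deg(M/xM) \,=\, \deg(M) \,=\, \ddeg(R),
\]
which is the desired inequality.
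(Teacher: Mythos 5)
This statement is presented in the paper as an open problem --- it is \cite[Conjecture 5.5]{bideg}, restated here without proof --- so there is no ``paper proof'' to compare against, and your proposal would have to stand on its own as a complete argument. It does not: Step 3 is exactly the mathematical content of the conjecture, and you leave it unresolved (``I expect this is the hardest step\dots''). The containment $\bar\C^{**} \subseteq \overline{\C^{**}}$, i.e.\ the assertion that biduals (equivalently, the colon ideals $(a):((a):\C)$) specialize well modulo $x$, is precisely the kind of statement that is known to behave asymmetrically: the paper's own Proposition~\ref{hscdeg} shows $\cdeg$ can only go \emph{up} under such sections, and the conjectured inequality for $\ddeg$ runs the other way, so no formal dualization of that argument is available. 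Reducing the obstruction to an element of the $a$-torsion of $\Ext^{1}_{R}((a):_R\C,\,R)$ is a reasonable reformulation, but you give no reason why it vanishes, and localizing at height-one primes of $R/(x)$ does not obviously help since those primes need not be the images of height-one primes of $R$ where $\ddeg(R)$ is computed.

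There are also secondary problems you should repair even if Step 3 were settled. In Step 1, the claim that $\Ass(R/\C)$ contains $\Ass(R)$ is false: since $R/\C$ is Cohen--Macaulay of dimension $d-1$, its associated primes have height one, while $\Ass(R)$ consists of minimal primes; so ``$x$ regular mod $\C$'' does not by itself give $x$ regular on $R$ --- the paper's setup assumes separately that $x$ is a regular element, and you must too (this also feeds into the identification of $\C/x\C$ with an ideal of $R/(x)$ and into the Tor argument in Step 2). In Step 4, you replace the given $x$ by one that is superficial for $M=\C^{**}/\C$; the conjecture, however, asserts the inequality for \emph{every} $x$ regular mod $\C$, so a generic choice proves at best a Bertini-type variant, and even then the equality $\deg(M)=\deg(M/xM)$ needs care because $M$ need not be Cohen--Macaulay and $\ddeg$ is computed as a multiplicity concentrated in height one. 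As it stands, the proposal is a plausible strategy outline, not a proof, and the conjecture remains open.
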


 \section{Generalization of bi-canonical degrees}\label{bicanonicalgeneralizationsection}
 
In our last paper with Vasconcelos \cite{BGHV} we defined the bi-canonical degree for rings where the canonical module is not necessarily an ideal. We also discussed generalizations to rings without canonical modules but admitting modules sharing some of their properties. In this section we summarize the main results and open questions.

\subsection*{bi-canonical degree of rings with a canonical module}

Let $(R, \m)$ be a Cohen-Macaulay local ring of dimension $d$ that has a canonical module $\omega$. For an $R$-module $E$, we denote the dual of $E$ by  ${\ds E^{*} = \Hom_{R}(E, R)}$.
By dualizing the finite free presentation of $\omega$
\[  F_{1} \stackrel{\varphi}{\lar} F_{0} \rar \omega \rar 0,\] 
we obtain the exact sequence
\[ 0 \rar \omega^{*} \rar F_{0}^{*} \stackrel{\varphi^{*}}{\lar} F_{1}^{*} \rar D(\omega) \rar 0,\]
where ${\ds D(\omega) = \coker(\varphi^{*})}$ is the Auslander dual of $\omega$. 
The module $D(\omega)$ depends on the chosen presentation but the values of ${\ds \Ext_{R}^i(D(\omega), R)}$, for $i\geq 1$, are independent of the presentation. By \cite[Proposition 2.6]{AusBr}, there exists an exact sequence 
\[ 0 \lar \Ext_{R}^1(D(\omega), R) \lar \omega  \lar  \omega^{**} \lar \Ext_{R}^2(D(\omega), R) \lar 0. \]

\begin{Definition}\label{bidegext}{\rm
Let $(R, \m)$ be a Cohen-Macaulay local ring with a canonical module $\omega$. The {\em bi-canonical degree} of $R$ is 
\[ \ddeg(R) = \deg( \Ext_{R}^1(D(\omega), R) ) + \deg( \Ext_{R}^2(D(\omega), R) ),\]
where ${\ds D(\omega) }$ is the Auslander dual of $\omega$ and $\deg(\tratto)$ is the multiplicity associated with the $\m$-adic filtration.
}\end{Definition}

An $R$-module $E$ is said to be {\em torsionless} if the natural homomorphism ${\ds \sigma: E \rar E^{**}}$ is injective. 
We show that if $R$ has a canonical ideal, then Definition~ \ref{bideg} and Definition~\ref{bidegext} coincide.

\begin{Proposition}\label{torsionless}\cite[Proposition 3.2]{BGHV}
Let $(R, \m)$ be a Cohen-Macaulay local ring with a canonical module $\omega$. Then $\omega$ is torsionless if and only if $\omega$ is isomorphic to an ideal of $R$.
\end{Proposition}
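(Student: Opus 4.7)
The proposition has two directions, one immediate and one requiring some work. For the easy direction, if $\omega$ is isomorphic to an ideal $I \subseteq R$, then $\omega$ embeds in $R$. Since the canonical map $R \rar R^{**}$ is an isomorphism, $R$ is torsionless, and any submodule of a torsionless module is torsionless (both embed in the same free module). Hence $\omega$ is torsionless.

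For the forward direction the plan is to reduce to Aoyama's criterion: $\omega$ is isomorphic to an ideal of $R$ if and only if $R_{\p}$ is Gorenstein for every ${\ds \p \in \Ass(R)}$ (see for instance Bruns--Herzog, Proposition 3.3.18). First I would invoke the standard fact that any finitely generated torsionless module over a Noetherian ring embeds in a finitely generated free module: take a surjection ${\ds R^{m} \rar \omega^{*} \rar 0}$ (possible since $\omega^{*}$ is finitely generated), dualize to obtain an injection ${\ds \omega^{**} \hookrightarrow R^{m}}$, and compose with the injection ${\ds \sigma : \omega \hookrightarrow \omega^{**}}$.

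Next, I would localize this embedding ${\ds \omega \hookrightarrow R^{m}}$ at a prime ${\ds \p \in \Ass(R)}$. Since $R$ is Cohen-Macaulay and $\p$ is associated to $R$, the localization $R_{\p}$ is Artinian local, and $\omega_{\p}$ is the canonical module of $R_{\p}$. For an Artinian local ring, the canonical module agrees with the injective hull ${\ds E_{R_{\p}}(k(\p))}$ of the residue field, so $\omega_{\p}$ is injective as an $R_{\p}$-module. Therefore the inclusion ${\ds \omega_{\p} \hookrightarrow R_{\p}^{m}}$ splits, exhibiting $\omega_{\p}$ as a projective and hence free direct summand of $R_{\p}^{m}$, say ${\ds \omega_{\p} \simeq R_{\p}^{k}}$. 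Comparing socles — the socle of ${\ds E_{R_{\p}}(k(\p))}$ has length one over $k(\p)$, while that of ${\ds R_{\p}^{k}}$ has length ${\ds k \cdot r(R_{\p})}$ — forces ${\ds k = r(R_{\p}) = 1}$. Thus $R_{\p}$ is Gorenstein for every ${\ds \p \in \Ass(R)}$, and Aoyama's criterion yields that $\omega$ is isomorphic to an ideal of $R$.

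The main point of care is verifying that $\omega_{\p}$ is genuinely injective over $R_{\p}$ and that its socle has length one; this is standard for the canonical module of an Artinian local ring, but is precisely what drives the splitting and the socle comparison that end the argument. Once that is in place, the reduction to Aoyama's criterion via associated primes is the natural route, and it will give the cleanest passage from a homological hypothesis on $\omega$ to the ideal-theoretic conclusion.
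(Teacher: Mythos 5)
Your proof is correct. The survey itself does not reproduce an argument for this proposition (it only cites \cite{BGHV}), so I judge your route on its own terms: the easy direction (an ideal containing $\omega$ up to isomorphism sits inside the free module $R$, hence is torsionless) is fine, and the forward direction is complete modulo standard facts you cite correctly. Concretely: torsionless gives $\omega\hookrightarrow \omega^{**}\hookrightarrow R^{m}$; localizing at $\p\in\Ass(R)$ (which equals $\mathrm{Min}(R)$ since $R$ is Cohen--Macaulay, so $R_{\p}$ is Artinian) and using that the canonical module localizes (\cite[Theorem 3.3.5]{BH}), $\omega_{\p}\simeq E_{R_{\p}}(k(\p))$ is injective, so the embedding splits, $\omega_{\p}$ is free, and the socle count $1=k\cdot r(R_{\p})$ forces $\omega_{\p}\simeq R_{\p}$, i.e.\ $R_{\p}$ Gorenstein; then the criterion that $\omega$ is isomorphic to an ideal if and only if $R$ is generically Gorenstein (\cite[Proposition 3.3.18]{BH}) finishes the argument. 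Two minor remarks: the criterion you use is \cite[Proposition 3.3.18]{BH} rather than what is usually called Aoyama's theorem (the latter is the localization statement $\omega_{\p}\simeq\omega_{R_{\p}}$, which in the Cohen--Macaulay case you are implicitly and harmlessly using); and you should note explicitly that $\omega_{\p}\neq 0$, so the free rank $k$ is at least $1$ before the socle comparison -- both are cosmetic, not gaps.
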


Notice that by Definition~\ref{bidegext}, $\ddeg(R)=0$ if and only if $\omega$ is reflexive.  By \cite[Korollar 7.29]{HK2}, this is equivalent to $R_{\p}$ is Gorenstein for all primes $\p$ with height $1$. Therefore we can generalize Remark \ref{bidegzero}.

\begin{Proposition}\label{vanishingbideg} \cite[Proposition 3.3]{BGHV}
Let $(R, \m)$ be a Cohen-Macaulay local ring with a canonical module $\omega$. Then $\ddeg(R)=0$ if and only if $R$ is Gorenstein in codimension $1$.
\end{Proposition}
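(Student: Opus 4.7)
The key structural fact to exploit is the Auslander--Bridger four-term exact sequence displayed immediately before Definition \ref{bidegext},
\[ 0 \to \Ext^{1}_{R}(D(\omega),R) \to \omega \xrightarrow{\sigma} \omega^{**} \to \Ext^{2}_{R}(D(\omega),R) \to 0,\]
which identifies the two Ext modules appearing in Definition \ref{bidegext} as precisely $\ker \sigma$ and $\coker \sigma$, where $\sigma$ denotes the natural biduality map. Consequently
\[ \ddeg(R) \;=\; \deg(\ker \sigma) + \deg(\coker \sigma). \]

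Since both summands are nonnegative multiplicities with respect to the $\m$-adic filtration, the vanishing $\ddeg(R)=0$ is equivalent to $\deg(\ker\sigma) = 0$ and $\deg(\coker\sigma)=0$. My plan is then to argue, consistently with how the multiplicity is used in Definition \ref{bideg} and its reformulation as a sum over height-one primes, that this vanishing is equivalent to the localized map $\sigma_{\p}\colon \omega_{\p} \to (\omega_{\p})^{**}$ being an isomorphism for every prime $\p$ with $\h(\p)\leq 1$; equivalently, $\omega_{\p}$ is reflexive over $R_{\p}$ for every such $\p$.

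The conclusion is then immediate from the classical criterion of Herzog--Kunz, \cite[Korollar 7.29]{HK2}: for a Cohen--Macaulay local ring with canonical module, reflexivity of the canonical module is equivalent to the ring being Gorenstein in codimension $1$. Applying this criterion either globally (since reflexivity of $\omega$ localizes) or pointwise at each height-$\leq 1$ prime completes the chain of equivalences $\ddeg(R)=0 \iff \sigma \text{ is an iso in codim }\leq 1 \iff \omega \text{ is reflexive in codim }\leq 1 \iff R \text{ is Gorenstein in codim } 1$.

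The main (mild) obstacle is the bookkeeping in the second paragraph: one must verify that the vanishing of $\deg(\Ext^{i}_{R}(D(\omega),R))$ precisely captures the failure of $\sigma$ to be an isomorphism at primes of height at most $1$, rather than at higher codimension. This is exactly the calibration already used in Definition \ref{bideg}, where $\ddeg(R)$ is expressed as the sum $\sum_{\h(\p)=1}\ddeg(R_{\p})\deg(R/\p)$; once this compatibility is recorded, the proof is a direct combination of Auslander duality and the Herzog--Kunz reflexivity criterion, with no computation beyond unpacking the exact sequence.
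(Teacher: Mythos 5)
Your proposal is correct, and its two ingredients are exactly the paper's: the Auslander--Bridger four-term sequence identifying $\Ext_{R}^1(D(\omega),R)$ and $\Ext_{R}^2(D(\omega),R)$ with the kernel and cokernel of the biduality map $\sigma\colon \omega \to \omega^{**}$, and the Herzog--Kunz criterion \cite[Korollar 7.29]{HK2}. The only difference is your middle step, and it is the one place where you create unnecessary friction: the paper argues more directly that $\ddeg(R)=0$ forces $\Ext_{R}^1(D(\omega),R)=\Ext_{R}^2(D(\omega),R)=0$ (a nonzero finitely generated module has positive multiplicity), hence $\sigma$ is an isomorphism and $\omega$ is reflexive \emph{globally}, after which a single application of Herzog--Kunz gives ``Gorenstein in codimension one''; no height-one calibration of $\deg$ is needed. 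Be aware that the backward direction of your claimed equivalence ``$\ddeg(R)=0$ iff $\sigma_{\p}$ is an isomorphism for all $\h(\p)\le 1$'' is not pure degree bookkeeping under that reading of $\deg$: an isomorphism in codimension one does not formally kill the multiplicities of $\ker\sigma$ and $\coker\sigma$ unless you first upgrade it to a global isomorphism, which is precisely what Herzog--Kunz (equivalently, the $S_2$ behavior of $\omega$ and $\omega^{**}$) supplies. Since you do invoke Herzog--Kunz anyway, your chain of equivalences closes, but ordering them as the paper does --- $\ddeg(R)=0$ iff $\omega$ reflexive iff $R$ Gorenstein in codimension one --- removes the obstacle you flag in your final paragraph rather than having to resolve it.
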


In accordance with Theorem \ref{AGorddeg}, we ask when the minimal value of bideg is attained.

\begin{Question}\label{Gotoring}{\rm 
Let $R$ be a Cohen-Macaulay local ring of dimension $1$ with a canonical module $\omega$. When is $\ddeg(R)=1$?
}\end{Question}

\begin{Remark}{\rm By Definition~\ref{bidegext}, $\ddeg(R)=1$ if and only if one of the following holds:
\begin{enumerate}[{\rm (1)}]
\item $ \Ext_{R}^1(D(\omega),R)=0$ and  $\deg( \Ext_{R}^2(D(\omega), R) )=1$;
\item $ \Ext_{R}^2(D(\omega),R)=0$ and  $\deg( \Ext_{R}^1(D(\omega), R) )=1$. 
\end{enumerate}
By Proposition \ref{torsionless} we have $ \Ext_{R}^1(D(\omega), R)=0$ if and only if $\omega$ is isomorphic to an ideal $\C$ of $R$. Therefore case (1) is equivalent to $\deg( \Ext_{R}^2(D(\omega), R) )=\l(\C^{**}/\C)=1$, i.e., $R$ is a Goto ring. It would be interesting to find an explicit example of case (2).
} \end{Remark}

The following proposition, which follows from classical results in \cite{BH}, is helpful in computing examples with Macaulay2, since it provides an explicit presentation of the Auslander dual as well as a criterion for the canonical module to be an ideal in terms of a matrix.

\begin{Proposition}\label{bidegcomp}\cite[Proposition 3.4]{BGHV}
Let $(S, \n)$ be a regular local ring. Let $I$ be an $S$-ideal of height $g$ such that $R=S/I$ is Cohen-Macaulay. Let 
\[ 0 \rar G_{g} \stackrel{\sigma}{\lar} G_{g-1} \rar \cdots \rar G_{1} \rar G_{0}=S \rar 0\]
be a minimal free $S$-resolution of $R$. Then:
\begin{enumerate}[{\rm (1)}]
\item ${\ds  \omega_{R} \simeq \coker(\sigma^{T} \otimes R)}$.
\item ${\ds  D(\omega_{R})  \simeq \coker( \sigma \otimes R) }$.
\item Let ${\ds \varphi= \sigma^{T} \otimes R }$ and ${\ds G_{g}= S^{p}}$. Then $\omega_{R}$ is an ideal if and only if ${\ds \h(I_{p-1}(\varphi) )\geq 1}$ and $I_{p}(\varphi)=0$.
\end{enumerate}
\end{Proposition}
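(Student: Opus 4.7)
The plan is to handle the three parts sequentially, each reducing to a standard homological or Fitting-ideal computation.

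For part (1), I would invoke local duality: since $S$ is regular with $\omega_S \simeq S$ and $R = S/I$ is Cohen-Macaulay of codimension $g$, we have $\omega_R \simeq \Ext_S^g(R, S)$. Applying $\Hom_S(-, S)$ to the given minimal free $S$-resolution produces a complex whose rightmost map is $\sigma^{*} \colon G_{g-1}^{*} \to G_g^{*}$; by the Cohen-Macaulay hypothesis this complex is exact everywhere except at $G_g^{*}$, where the cohomology is $\coker(\sigma^{*}) = \coker(\sigma^{T})$ in matrix form. Because this cokernel is annihilated by $I$, right exactness of tensor identifies it as an $R$-module with $\coker(\sigma^{T} \otimes_S R)$.

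For part (2), the output of (1) is a finite free $R$-presentation
\[
G_{g-1}^{*} \otimes R \xrightarrow{\sigma^{T} \otimes R} G_g^{*} \otimes R \to \omega_R \to 0
\]
of $\omega_R$, which I feed directly into the definition of the Auslander dual. Applying $\Hom_R(-, R)$, and using the standard identification $\Hom_R(F^{*} \otimes_S R, R) \simeq F \otimes_S R$ for free $S$-modules $F$ together with $(\sigma^{T} \otimes R)^{*} = \sigma \otimes R$, delivers $D(\omega_R) = \coker(\sigma \otimes R)$ by definition.

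For part (3), by Proposition~3.2 the condition ``$\omega_R$ is an ideal'' is equivalent to ``$\omega_R$ is torsionless'', and by classical results on canonical modules (cf.\ \cite[Proposition 3.3.18]{BH}) this is equivalent to $R_{\p}$ being Gorenstein at every minimal prime $\p$ of $R$. I would then translate this local condition through the Fitting ideals of $\varphi = \sigma^{T} \otimes R \colon R^q \to R^p$. At each minimal prime $\p$, the Artinian local ring $R_{\p}$ is Gorenstein iff $\omega_{R_{\p}} \simeq R_{\p}$, iff $\varphi_{\p}$ has rank exactly $p - 1$, which is captured by $I_{p-1}(\varphi)_{\p} = R_{\p}$ (making $\coker(\varphi_{\p})$ cyclic) together with $I_p(\varphi)_{\p} = 0$ (preventing $\coker(\varphi_{\p})$ from collapsing). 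Ranging over all minimal primes yields $\h(I_{p-1}(\varphi)) \geq 1$; for $I_p(\varphi)$, the Cohen-Macaulayness of $R$ (hence the absence of embedded primes) upgrades the local vanishing to the global $I_p(\varphi) = 0$.

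The main obstacle I expect is this last step: showing that ``$I_p(\varphi)_{\p} = 0$ for every minimal $\p$'' forces the global vanishing $I_p(\varphi) = 0$ uses crucially that the associated primes of $R$ coincide with the minimal primes, so that an ideal with zero localization at every associated prime must be zero. The rest of the argument is essentially bookkeeping with dualized free resolutions, duals of free modules, and the dictionary between rank conditions and Fitting ideals.
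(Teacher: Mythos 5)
Your proof is correct and follows what is evidently the intended route: the survey itself gives no argument, deferring to \cite[Proposition 3.4]{BGHV} with the remark that it follows from classical results in \cite{BH}, and your steps are exactly that classical derivation — $\omega_R \simeq \Ext^g_S(R,S)$ read off the dualized minimal resolution (exact off the end since $R$ is Cohen--Macaulay of codimension $g$, with the cokernel an $R$-module killed by $I$), the Auslander dual computed from the resulting presentation so that $D(\omega_R) \simeq \coker(\sigma \otimes R)$, and for (3) the equivalence ``$\omega_R$ an ideal $\Leftrightarrow$ $R$ generically Gorenstein'' combined with the Fitting-ideal criterion $I_{p-1}(\varphi)_{\p}=R_{\p}$, $I_p(\varphi)_{\p}=0$ for $\coker(\varphi_{\p})\simeq R_{\p}$ at each minimal prime. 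Your key globalization step is also sound: since $R$ is Cohen--Macaulay its associated primes are minimal, so an ideal vanishing locally at all minimal primes is zero, which is precisely what upgrades $I_p(\varphi)_{\p}=0$ to $I_p(\varphi)=0$.
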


Using Proposition~\ref{bidegcomp} we compute $\ddeg(R)$ in a ring when the canonical module is not an ideal.

\begin{Example}\label{ddegexample} \cite[Example 3.5]{BGHV} {\rm
Let $S$ be a regular local ring with the maximal ideal $(X, Y, Z)$. Let $R=S/I$ be given by the minimal resolution
\[ 0 \lar S^{2} \stackrel{\sigma}{\lar} S^{3} \lar S \lar R \lar 0, \; \mbox{where} \;  \sigma=\left(\begin{matrix}
X^{2} & Y^{2} \\ YZ & ZX \\ XZ & XY 
\end{matrix}\right).\]
Let $x, y, z$ be the images of $X, Y, Z$ in $R=S/I$ respectively. We obtain the free presentation of the canonical module $\omega_{R}$ of $R$:
\[ R^{3} \stackrel{\varphi}{\lar} R^{2} \rar \omega_{R} \rar 0,\]
where ${\ds \varphi = \sigma^{T} \otimes R =\left(\begin{matrix}
x^{2} & yz & xz \\ y^{2} & zx & xy \end{matrix}\right) }$.  Since  ${\ds \h(I_{1}(\varphi) ) =0}$, the canonical module $\omega_{R}$ is not an ideal.  Moreover, we have
\[ 0 \rar \omega_{R}^{*} \rar R^{2} \stackrel{\varphi^{*}}{\lar} R^{3} \rar D(\omega_{R}) \rar 0,\]
where ${\ds \varphi^{*} = \sigma \otimes R = \left(\begin{matrix}
x^{2} & y^{2} \\ yz & zx \\ xz & xy 
\end{matrix}\right)}$.  By using Macaulay2, we compute the bi-canonical degree:
\[ \ddeg(R) = \deg( \Ext_{R}^1(D(\omega_{R}), R) ) + \deg( \Ext_{R}^2(D(\omega_{R}), R) ) = 1+ 6 = 7. \qedhere \]
}\end{Example}

\subsection*{Precanonical ideals} We consider a Cohen-Macaulay local ring $R$ that does not necessarily have a canonical module. We look for rings that have ideals with properties similar to those of a canonical ideal. Ideals with similar set of conditions have appeared in the literature, notably among them closed ideals, studied by Brennan and Vasconcelos in \cite{BV1}, and spherical modules, introduced by Vasconcelos in \cite{V74}. We recall the necessary background.

\smallskip

An ideal $I$ is said to be {\em closed} if $\Hom_{R}(I, I)=R$. An $R$-module $E$ is said to be {\em spherical} if ${\ds \Hom_{R}(E, E)= R}$ and ${\ds \Ext^{i}_{R}(E, E)=0}$ for all $i \geq 1$ \cite[Section 4.3]{V74}. Spherical modules are also called {\em semidualizing} and have been studied extensively by Sather-Wagstaff. We refer to \cite{SWsemidualizing} for a survey. 

\smallskip

We note that the canonical module is semidualizing.
If $R$ is not Gorenstein, principal ideals generated by a non-zero divisor are semidualizing. Examples of rings with non trivial semidualizing ideals are constructed in \cite{SW-divisorclassgroup}.

\smallskip

We note an early result of Vasconcelos in this context.

\begin{Proposition}\cite[Proposition 4.9]{V74}. Let $(R, \m)$ be an Artinian local ring with $\m^2=0$. Let $E$ denote the the injective hull of $R/\m$. Let $P$ be a finitely generated $R$-module that satisfies ${\ds \Hom_{R}(P, P)= R}$ and ${\ds \Ext^{1}_{R}(P, P)=0}$. Then $P \simeq R$, or $P \simeq E$.
\end{Proposition}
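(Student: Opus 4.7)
Write $k = R/\m$ and $n = \dim_k \m$. Since $\m^2 = 0$, every finitely generated $R$-module $M$ is determined up to isomorphism by its linear-algebraic data $(U, W, \psi)$, where $U = M/\m M$, $W = \m M$ (both naturally $k$-vector spaces), and $\psi\colon \m \otimes_k U \twoheadrightarrow W$ is the $k$-linear map induced by scalar multiplication; equivalently, by $\tilde\psi\colon \m \to \Hom_k(U, W)$. In this description, $R \leftrightarrow (k, \m, \mathrm{id})$ and $E \leftrightarrow (\m^*, k, \mathrm{ev})$, where $\mathrm{ev}$ is the evaluation pairing $\m \otimes \m^* \to k$. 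If $\mu := \nu(P) = 1$, then $P$ is cyclic, so $P \cong R/I$, and the equality $\Hom_R(P, P) = R$ forces $I = 0$ and $P \cong R$. Henceforth assume $\mu \geq 2$, and set $\delta = \dim_k W$ and $s = \dim_k \operatorname{soc}(P)$.

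A direct computation on the decomposition $P = U \oplus W$ (as $k$-vector spaces) parameterizes $R$-linear endomorphisms of $P$ by pairs $(a, c)$, with $a \in \End_k(U)$ satisfying $\psi \circ (1_\m \otimes a) = d \circ \psi$ for some (necessarily unique) $d \in \End_k(W)$, and $c \in \Hom_k(U, W)$ arbitrary; the natural structure map $R \to \End_R(P)$ sends $\alpha + m \mapsto (\alpha\,\mathrm{id}_U,\,\tilde\psi(m))$. Requiring this map to be a bijection forces $\tilde\psi\colon \m \xrightarrow{\sim} \Hom_k(U, W)$ to be an isomorphism (so $n = \mu\delta$) and forces only scalars to satisfy the compatibility for $a$. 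To exploit the $\Ext$ hypothesis, I would apply $\Hom_R(-, P)$ to $0 \to \m P \to P \to P/\m P \to 0$; since $\m P \cong k^\delta$ and $P/\m P \cong k^\mu$, and a direct calculation from $0 \to \m \to R \to k \to 0$ yields $\dim_k \Ext^1_R(k, P) = s(n+1) - \lambda(P)$, the vanishing $\Ext^1_R(P, P) = 0$ truncates the long exact sequence to
\[ 0 \to \Hom_R(k^\mu, P) \to \Hom_R(P, P) \to \Hom_R(k^\delta, P) \to \Ext^1_R(k^\mu, P) \to 0. \]
The alternating-sum identity in $k$-dimensions, together with $n = \mu\delta$, simplifies after a short calculation to $(s\delta - 1)(1 - \mu^2) = 0$. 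Since $\mu \geq 2$, this forces $s = \delta = 1$, and therefore $\mu = n$.

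With $\delta = 1$, the isomorphism $\tilde\psi\colon \m \xrightarrow{\sim} \Hom_k(U, k) = U^*$ identifies $U$ with $\m^*$ and the structure pairing $\psi$ with the canonical evaluation $\mathrm{ev}\colon \m \otimes \m^* \to k$. Thus the triple $(U, W, \psi)$ for $P$ coincides with the one for $E$, so $P \cong E$. The main obstacle is the endomorphism bookkeeping that extracts \emph{both} the isomorphism of $\tilde\psi$ \emph{and} the scalar-only condition on $a$ from the single hypothesis $\Hom_R(P, P) = R$, combined with the careful length count in the $\Ext^1$-vanishing identity; once these linear-algebraic pieces are in place, the final identification $P \cong E$ is automatic.
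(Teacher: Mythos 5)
Your computations are internally consistent and I could verify the key steps: the endomorphism bookkeeping does force $\tilde\psi\colon \m \to \Hom_k(P/\m P,\m P)$ to be bijective (hence $n=\mu\delta$), the identity $\l(\Ext^1_R(k,P))=s(n+1)-\l(P)$ is correct, and the alternating sum over your four-term exact sequence really does collapse to $(s\delta-1)(1-\mu^2)=0$, so $s=\delta=1$ and $\mu=n$ once $\mu\geq 2$. (For the record, the survey only quotes this result from Vasconcelos' 1974 book and gives no proof, so there is no in-paper argument to compare routes with.)

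The genuine gap is in your very first sentence. The classification of finitely generated modules by triples $(U,W,\psi)$, the splitting $P=U\oplus W$ ``as $k$-vector spaces,'' and the description of the structure map as $\alpha+m\mapsto(\alpha\,\mathrm{id}_U,\tilde\psi(m))$ all presuppose that $R$ contains its residue field, i.e.\ $R\cong k\ltimes\m$. The hypothesis $\m^2=0$ does not give this: $R=\ZZ/p^2$, or the non-Gorenstein mixed-characteristic example $R=(\ZZ/p^2)[x]/(x^2,px)$, are Artinian local with $\m^2=0$, yet their modules (e.g.\ $R$ itself) are not $k$-vector spaces, so neither the decomposition $P=U\oplus W$ nor your final step ``$P$ has the same triple as $E$, hence $P\cong E$'' makes sense there; as written you have proved the proposition only in the equicharacteristic case. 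The argument is repairable without the splitting: the endomorphisms of $P$ that kill $\m P$ and have image inside $\m P$ are exactly the composites $P\twoheadrightarrow P/\m P\to \m P\hookrightarrow P$ with middle map in $\Hom_k(P/\m P,\m P)$, and comparing these with multiplications by elements of $\m$ (any such endomorphism is multiplication by a non-unit, and $P$ is faithful) gives bijectivity of $\tilde\psi$ intrinsically; your $\Ext$ computation never used the splitting, only $\l(P)=\mu+\delta$, $\l(R)=n+1$ and $\m\cong k^{\,n}$; and once $s=\delta=1$ you can conclude characteristic-freely: $\operatorname{soc}(P)\cong k$, so $P$ embeds into $E=E(k)$ by injectivity, while $\l(P)=\mu+\delta=n+1=\l(E)$ forces $P\simeq E$. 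With that repair your proof is complete in the stated generality.
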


In the set-up of the above proposition, an example of a module $P$ that is neither $R$ nor $E$ is in \cite[Section 4.3]{V74}. We now define a class of ideals in this setting.

\begin{Definition}\label{precanonicalideal}{\rm
Let $R$ be a Cohen-Macaulay local ring. An $R$-ideal $\P$ is called a {\em precanonical} ideal if $\Hom_{R}(\P, \P)=R$ and $\Ext_{R}^{1}(\P, \P)=0$. 
}\end{Definition}

\smallskip

In \cite[Theorem 4.2]{BGHV} we prove conditions under which a precanonical ideal is canonical. It would be interesting to find ideals that are precanonical, but not canonical. Next, we propose the following definition.

\begin{Definition}{\rm Let $R$ be a Cohen-Macaulay local ring with a precanonical ideal $\P$. The {\em bi-canonical degree} of $R$ {\em relative to} $\P$ is 
\[ \ddeg_{\P}(R) = \deg(\P^{**}/\P). \]
}
\end{Definition}

\begin{Question}\cite[Question 4.3]{BGHV} {\rm Which are the properties we can obtain from the bi-canonical degree of $R$ relative to $\P$  to understand the structures of the ideal and the ring?}
\end{Question}

Recall that if $\C$ is a canonical ideal of a $1$-dimensional Cohen-Macaulay local ring, then $\ddeg(R)=\lambda(\C^{**}/\C)=0$ (i.e., $\C$ is reflexive) if and only if $R$ is Gorenstein (i.e., $\C$ is isomorphic to $R$). In \cite[Proposition 4.5]{BGHV} we generalize as follows.

 \begin{Proposition}\label{resddeg3}
Let \(R\) be a \(1\)-dimensional Cohen-Macaulay local ring and \(I\) an ideal that contains a non-zero-divisor. If \(I\) is closed and reflexive,  then \(I\) is a principal ideal. 
\end{Proposition}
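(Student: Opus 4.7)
The plan is to normalize $I$ to a fractional ideal containing $R$, show its trace ideal equals $R$, and conclude $I$ is invertible, hence principal in the local ring. Writing $\mathcal Q$ for the total ring of fractions and picking a non-zero-divisor $a \in I$, I would replace $I$ by the $R$-isomorphic fractional ideal $a^{-1} I$. In this normalization $R \subseteq I \subseteq \mathcal Q$, the dual $I^* = (R :_{\mathcal Q} I)$ is an honest $R$-ideal (still containing a non-zero-divisor, namely $a$), and ``$I$ is principal'' becomes ``$I \simeq R$'' as $R$-modules.

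The central calculation is that the trace ideal $T := I I^* \subseteq R$ satisfies $(R :_{\mathcal Q} T) = R$. For the nontrivial inclusion, if $q \in \mathcal Q$ satisfies $qT \subseteq R$, I would rewrite this as $(qI)\,I^* \subseteq R$, which gives $qI \subseteq (R :_{\mathcal Q} I^*) = I^{**} = I$ by reflexivity, and hence $q \in (I :_{\mathcal Q} I) = R$ by closedness. This is precisely where both hypotheses on $I$ enter together.

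The other key ingredient I would prove is a lemma valid in any $1$-dimensional Cohen--Macaulay local ring: if $T \subsetneq R$ is a proper ideal containing a non-zero-divisor $a$, then $(R :_{\mathcal Q} T) \supsetneq R$. The point is that $R/aR$ is a nonzero Artinian local ring with nonzero socle, so composing the surjection $R/T \twoheadrightarrow R/\m$ (which exists since $T \subseteq \m$) with any embedding $R/\m \hookrightarrow \mathrm{Soc}(R/aR)$ yields a nonzero element of $\Hom_R(R/T, R/aR) \simeq (aR :_R T)/aR$; lifting to $b \in (aR :_R T) \setminus aR$ then produces $b/a \in (R :_{\mathcal Q} T) \setminus R$.

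Combining these two facts forces $T = R$, that is, $II^* = R$. This is the classical invertibility condition: any expression $1 = \sum i_k \alpha_k$ with $i_k \in I$, $\alpha_k \in I^*$ provides a dual basis, so $I$ is a finitely generated projective $R$-module of rank one; over the local ring $R$ this makes $I$ free of rank one, hence principal after undoing the normalization. The only technically nontrivial step is the lemma, which uses both $\dim R = 1$ (so $R/aR$ is Artinian and has a socle element to receive the map) and Cohen--Macaulayness (to guarantee non-zero-divisors in ideals of height one). Both hypotheses on $I$ are essential, as can be seen from the fact that in a non-Gorenstein $R$ the canonical ideal is closed but not reflexive, while in $k\bl t^{2},t^{3}\br$ the maximal ideal is reflexive but not closed, and neither is principal.
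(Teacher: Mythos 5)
Your proof is correct. A caveat on comparison: this survey does not reproduce an argument for Proposition \ref{resddeg3} — it only quotes the statement from \cite[Proposition 4.5]{BGHV} — so I can only judge your route on its own terms and against the colon-ideal formalism the paper sets up. Your argument is exactly that formalism: after normalizing, your computation $(R:_{\mathcal{Q}} I I^{*})=R$ is, in unnormalized terms, the chain $(a):_{\mathcal{Q}} IJ = I:_{\mathcal{Q}} I = R$ for $J=(a):_R I$, where reflexivity enters as $(a):_R((a):_R I)=I$ (Proposition \ref{bidual}(2)) and closedness as $(I:_{\mathcal{Q}}I)=R$; your socle lemma then forces $IJ=(a)$, i.e.\ invertibility, hence principality over the local ring. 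Two points are worth making explicit in a write-up: (i) you use throughout the identifications $\Hom_R(J,R)\simeq (R:_{\mathcal{Q}}J)$ and $\Hom_R(J,J)\simeq (J:_{\mathcal{Q}}J)$ for fractional ideals containing a non-zero-divisor, compatibly with the natural biduality map, so that ``reflexive'' becomes $(R:_{\mathcal{Q}}(R:_{\mathcal{Q}}I'))=I'$; this is standard and is what Proposition \ref{bidual} records, but it carries the proof. (ii) The ``projective of rank one'' phrasing is an unnecessary detour in a total ring of fractions with zero-divisors: from $1=\sum i_kq_k$ with each $i_kq_k\in R$ and $R$ local, some $i_kq_k$ is a unit and then $I'=i_kR$ directly. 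Your closing examples correctly show neither hypothesis can be dropped.
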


\begin{Corollary}Let \(R\) be a \(1\)-dimensional Cohen-Macaulay local ring with a precanonical ideal $\P$ that contains a non-zero-divisor. Then $\ddeg_{\P}(R)=0$ if and only if $\P$ is a principal ideal.
\end{Corollary}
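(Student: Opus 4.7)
The corollary is essentially a direct reformulation of Proposition~\ref{resddeg3} in the language of the bi-canonical degree, so the plan is short. First I would observe that the precanonical hypothesis $\Hom_R(\P,\P)=R$ says exactly that $\P$ is a closed ideal in the sense of Brennan–Vasconcelos. Hence, as soon as $\P$ is shown to be reflexive, Proposition~\ref{resddeg3} applies and forces $\P$ to be principal.

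Next I would translate the vanishing $\ddeg_{\P}(R)=0$ into reflexivity. Since $R$ is one-dimensional Cohen–Macaulay and $\P$ contains a non-zero-divisor, $\P$ is torsionless and $\P \hookrightarrow \P^{**}$ is an injection of modules of dimension one that agree at every minimal prime of $R$. Consequently $\P^{**}/\P$ has finite length, and $\ddeg_{\P}(R)=\deg(\P^{**}/\P)=\lambda(\P^{**}/\P)$. Thus $\ddeg_{\P}(R)=0$ is equivalent to $\P=\P^{**}$, i.e., to $\P$ being reflexive.

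Putting the two directions together: if $\ddeg_{\P}(R)=0$ then $\P$ is closed and reflexive, so Proposition~\ref{resddeg3} yields that $\P$ is principal. Conversely, if $\P=(a)$ for a non-zero-divisor $a$, then $\P\simeq R$, and $R$ is trivially reflexive as a module over itself, giving $\P^{**}=\P$ and hence $\ddeg_{\P}(R)=0$.

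There is no real obstacle: the substantive content is carried by Proposition~\ref{resddeg3}, and the only thing to verify carefully is that in dimension one the quantity $\deg(\P^{**}/\P)$ coincides with $\lambda(\P^{**}/\P)$, which is immediate from the fact that $\P$ contains a regular element.
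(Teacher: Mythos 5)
Your argument is correct and follows essentially the same route as the paper: the vanishing of $\ddeg_{\P}(R)$ is identified with reflexivity of $\P$ (finite length of $\P^{**}/\P$ making $\deg=\lambda$), the precanonical condition $\Hom_R(\P,\P)=R$ is read as closedness, and Proposition~\ref{resddeg3} then gives principality, while the converse is the same computation the paper delegates to Proposition~\ref{bidual}. The only point worth stating explicitly is that a principal ideal containing a non-zero-divisor is generated by a non-zero-divisor, which you implicitly use when writing $\P=(a)$ with $a$ regular.
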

\begin{proof} If $\ddeg_{\P}(R)=0$, then $\P$ is reflexive and so it is principal by Proposition \ref{resddeg3}. If $\P$ is a principal ideal the conclusion follows by Proposition \ref{bidual}.
\end{proof}

\begin{Question}{\rm
Can we remove the hypothesis that $I$ contains a non-zero-divisor in Proposition~\ref{resddeg3}?
}\end{Question}

\begin{Question}{\rm How does $\Ext_{R}^{1}(I,I)=0$ affect the structure of $I$?
}\end{Question}

We conclude this manuscript by mentioning that the questions on torsionless and reflexivity that came up while the authors worked on \cite{BGHV} inspired the paper \cite{EGGHIKMT} that studies torsionless and reflexivity in the more general context of $q$-torsionfree modules and for rings that are not necessarily Cohen-Macaulay. In particular, \cite[Proposition 2.3]{EGGHIKMT} generalizes Proposition \ref{torsionless}.

\bigskip

\end{document}